\crefname{section}{Section}{Sections}
\crefname{subsection}{\S}{\S\S}
\crefname{subsubsection}{\S}{\S\S}
\theoremstyle{plain}
\newtheorem{lemma}{Lemma}[section]
\newtheorem{proposition}[lemma]{Proposition}
\newtheorem{corollary}[lemma]{Corollary}
\newtheorem{theorem}[lemma]{Theorem}
\theoremstyle{plain}
\newtheorem{theoremN}{Theorem}
\newtheorem{corollaryN}[theoremN]{Corollary}
\theoremstyle{plain}
\newtheorem{definition}[lemma]{Definition}
\newtheorem{example}[lemma]{Example}
\newtheorem{remark}[lemma]{Remark}
\newtheorem{remarks}[lemma]{Remarks}
\crefname{definition}{definition}{definitions}
\crefname{ex}{example}{examples}
\crefname{exs}{example}{examples}
\crefname{remark}{remark}{remarks}
\crefname{remarks}{remark}{remarks}
\crefname{convention}{convention}{conventions}
\crefname{notation}{notation}{notations}
\crefname{table}{table}{tables}
\crefname{lemma}{lemma}{lemmas}
\crefname{proposition}{proposition}{propositions}
\crefname{propositionN}{proposition}{propositions}
\crefname{corollary}{corollary}{corollaries}
\crefname{corollaryN}{corollary}{corollaries}
\crefname{theorem}{theorem}{theorems}
\crefname{theoremN}{theorem}{theorems}
\crefname{enumi}{}{}
\crefname{assumption}{assumption}{Assumptions}
\crefname{construction}{construction}{Constructions}
\crefname{equation}{}{}
\numberwithin{equation}{section}
\theoremstyle{nonumberplain}
\newtheorem{proof}{Proof}
\newcommand\bC{{\mathbb C}}
\newcommand\bD{{\mathbb D}}
\newcommand\bE{{\mathbb E}}
\newcommand\bG{{\mathbb G}}
\newcommand\bH{{\mathbb H}}
\newcommand\bK{{\mathbb K}}
\newcommand\bP{{\mathbb P}}
\newcommand\bQ{{\mathbb Q}}
\newcommand\bR{{\mathbb R}}
\newcommand\bS{{\mathbb S}}
\newcommand\bT{{\mathbb T}}
\newcommand\bU{{\mathbb U}}
\newcommand\bZ{{\mathbb Z}}
\newcommand\cA{{\mathcal A}}
\newcommand\cC{{\mathcal C}}
\newcommand\cH{{\mathcal H}}
\newcommand\cI{{\mathcal I}}
\newcommand\cK{{\mathcal K}}
\newcommand\cL{{\mathcal L}}
\newcommand\cM{{\mathcal M}}
\newcommand\cO{{\mathcal O}}
\newcommand\cP{{\mathcal P}}
\DeclareMathOperator{\id}{id}
\DeclareMathOperator{\im}{im}
\DeclareMathOperator{\End}{\mathrm{End}}
\DeclareMathOperator{\Irr}{\mathrm{Irr}}
\DeclareMathOperator{\spn}{\mathrm{span}}
\newcommand{\cat}[1]{\textsc{#1}}
\newcommand\spr[1]{\cite[\href{https://stacks.math.columbia.edu/tag/#1}{Tag {#1}}]{stacks-project}}
\newcommand{\qedhere}{\mbox{}\hfill\ensuremath{\blacksquare}}
\newcommand{\xrightarrowdbl}[2][]{%
  \xrightarrow[#1]{#2}\mathrel{\mkern-14mu}\rightarrow
}
\title{(Quantum) discreteness, spectrum compactness and uniform continuity}
\author{Alexandru Chirvasitu}
\begin{document}

\date{}

\newcommand{\Addresses}{{% additional braces for segregating \footnotesize
  \bigskip
  \footnotesize

  \textsc{Department of Mathematics, University at Buffalo}
  \par\nopagebreak
  \textsc{Buffalo, NY 14260-2900, USA}  
  \par\nopagebreak
  \textit{E-mail address}: \texttt{achirvas@buffalo.edu}

  % % \medskip
  % % 
  % % \textsc{Department of Mathematics, INSTITUTION}
  % % \par\nopagebreak
  % % \textsc{ADDRESS}
  % % \par\nopagebreak
  % % \textit{E-mail address}: \texttt{??}
  % % 

}}

\maketitle

\begin{abstract}
  We prove a number of results linking properties of actions by compact groups (both quantum and classical) on Banach spaces, such as uniform continuity, spectrum finiteness and extensibility of the actions across several constructions. Examples include: (a) a unitary representation of a compact quantum group induces a continuous action on the $C^*$-algebra of bounded operators if and only if it has finitely many isotypic components, and hence is uniformly continuous; (b) a compact quantum group is finite if and only if its continuous actions on $C^*$-algebras lift to continuous actions on either the multiplier algebras or von Neumann envelopes thereof; (c) a (classical) locally compact group $\mathbb{G}$ is discrete if and only if the forgetful functor from $\mathbb{G}$-acted-upon compact $T_2$ spaces back to compact $T_2$ spaces creates coproducts; (d) a representation of a linearly reductive quantum group has finitely many isotypic components if and only if its restrictions to two topologically-generating quantum subgroups, one of which is normal, do; (e) equivalent characterizations of uniform continuity for actions of compact groups on Banach spaces, e.g. that such an action is uniformly continuous if and only if its restrictions to a pro-torus and to pro-$p$ subgroups are. 
\end{abstract}

\noindent {\em Key words: compact group; quantum group; representation; uniformly continuous; Banach space; compact operator; multiplier algebra; pro-torus; profinite; tensor product; Hopf algebra; cosemisimple; linearly reductive; normal quantum subgroup; exact sequence}

\vspace{.5cm}

\noindent{MSC 2020: 20G42; 22D12; 22D10; 22D05; 22E45; 46M05; 46L06; 16T15; 16T05; 43A40; 43A65}

\tableofcontents

%%%%%%%%%%%%%%%%%%%%%%%%%%%%%%%%
%%%%%%%%%%%%%%%%%%%%%%%%%%%%%%%%
\section*{Introduction}

This note gathers a number of observations revolving around the issue of extending compact (quantum) group actions across constructions such as compactification, completion of some sort, coproducts, etc.

To help make the matter concrete (and exemplify the problems at hand), consider the discussion preceding \cite[Definition 2.7.1]{zbMATH04028180}: given an action of a group $\bG$ (mostly assumed compact throughout \cite{zbMATH04028180}) on a $C^*$-algebra $A$, continuous in the usual (\cite[\S 7.4.1]{ped-aut}, \cite[Definition II.10.3.1]{blk}) sense that
\begin{equation*}
  \bG\ni g
  \xmapsto{\quad}
  ga\in A
\end{equation*}
is continuous in the {\it norm} topology of $A$ for every $a\in A$, there seems to be no reason why the corresponding action on the {\it multiplier algebra} \cite[\S II.7.3]{blk} $M(A)$ will again be continuous in the same sense. This in turn motivates \cite[Definition 3.1.1]{zbMATH04028180} introducing the subspace $M_{\bG}(A)\le M(A)$ where said continuity does obtain, etc.

Explicit examples of the flagged (potential) pathology are not give in \cite[\S 2.7]{zbMATH04028180}, but they are easy to produce and then turn around into characterizations of finiteness/discreteness for the acting compact groups or the spectra of the actions in question. Much of this, moreover, extends to compact {\it quantum} groups. Recall, for instance, \cite[Definition 1.1]{wor-cqg} or \cite[Definition 1.1.1]{NeTu13}:

\begin{definition}\label{def:cqg}
  A {\it compact quantum group} $\bG$ consists of
  \begin{itemize}
  \item a unital $C^*$-algebra $\cC(\bG)$
  \item equipped with a $C^*$-morphism $\cC(\bG)\xrightarrow{\Delta}\cC(\bG)\underline{\otimes} \cC(\bG)$, {\it coassociative} in the sense that
    \begin{equation*}
      \begin{tikzpicture}[auto,baseline=(current  bounding  box.center)]
        \path[anchor=base] 
        (0,0) node (l) {$\cC(\bG)$}
        +(3,.5) node (u) {$\cC(\bG)^{\underline{\otimes} 2}$}
        +(3,-.5) node (d) {$\cC(\bG)^{\underline{\otimes} 2}$}
        +(6,0) node (r) {$\cC(\bG)^{\underline{\otimes} 3}$}
        ;
        \draw[->] (l) to[bend left=6] node[pos=.5,auto] {$\scriptstyle \Delta$} (u);
        \draw[->] (u) to[bend left=6] node[pos=.5,auto] {$\scriptstyle \Delta\underline{\otimes} \id$} (r);
        \draw[->] (l) to[bend right=6] node[pos=.5,auto,swap] {$\scriptstyle \Delta $} (d);
        \draw[->] (d) to[bend right=6] node[pos=.5,auto,swap] {$\scriptstyle \id\underline{\otimes}\Delta$} (r);
      \end{tikzpicture}
    \end{equation*}
    commutes;

  \item and such that
    \begin{equation*}
      \mathrm{span}~\{(a\underline{\otimes} 1)\Delta(b)\ |\ a,b\in \cC(\bG)\}
      \text{ and }
      \mathrm{span}~\{(1\underline{\otimes} a)\Delta(b)\ |\ a,b\in \cC(\bG)\}
    \end{equation*}
    are dense in $\cC(\bG)^{\underline{\otimes}2}$.
  \end{itemize}
\end{definition}

To return to lifting actions to multiplier algebras, then, \Cref{th:uniffinspec} reads:

\begin{theoremN}\label{thn:a}
  A unitary representation of a compact quantum group $\bG$ on a Hilbert space $H$ induces a continuous conjugation action on $\cL(H)$ if and only if $\rho$ has finitely many isotypic components.

  In particular, said action on $\cL(H)$ will then automatically be {\it uniformly} continuous in any reasonable sense (e.g. \Cref{def:normcontcqg}\Cref{item:unifcont}).  \qedhere
\end{theoremN}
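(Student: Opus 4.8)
The plan is to run everything through the Peter--Weyl decomposition of $\rho$. Write $H=\bigoplus_{\pi\in I}H_{\pi}$ for the splitting into isotypic components, $I\subseteq\Irr(\bG)$ being the set of classes that occur; thus $H_{\pi}\cong H_{\pi}^{0}\otimes M_{\pi}$ with $H_{\pi}^{0}$ the ($n_{\pi}$-dimensional) carrier space of $\pi$ and $M_{\pi}$ the multiplicity space, and the implementing unitary $\cU\in M\big(\cK(H)\underline{\otimes}\cC(\bG)\big)$ reads $\bigoplus_{\pi\in I}u^{\pi}\otimes 1_{M_{\pi}}$ with $u^{\pi}$ the matrix of coefficients of $\pi$. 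The conjugation recipe $\alpha(T):=\cU(T\otimes 1)\cU^{*}$ is coassociative by the corepresentation identity $(\id\underline{\otimes}\Delta)\cU=\cU_{12}\cU_{13}$, and restricts on $\cK(H)$ to the adjoint action, which is always a continuous $\bG$-action; so the theorem asks precisely when this extends to a continuous $\bG$-action on the multiplier algebra $\cL(H)=M(\cK(H))$, i.e.\ when $\alpha$ carries $\cL(H)$ into $\cL(H)\underline{\otimes}\cC(\bG)$ and satisfies the Podle\'s density condition. ``Finitely many isotypic components'' means $|I|<\infty$.

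For the implication ``$|I|<\infty\Rightarrow$ continuous (and uniformly continuous)'', say $I=\{\pi_{1},\dots,\pi_{m}\}$. Then $\cL(H)=\bigoplus_{i,j=1}^{m}\cL\big(H_{\pi_{i}}^{0},H_{\pi_{j}}^{0}\big)\otimes\cL(M_{i},M_{j})$ is a \emph{finite} direct sum, and on the $(i,j)$-summand $\alpha$ is the adjoint coaction on the finite-dimensional space $\cL\big(H_{\pi_{i}}^{0},H_{\pi_{j}}^{0}\big)$ (plainly continuous, everything there being finite-dimensional) tensored with the trivial coaction on $\cL(M_{i},M_{j})$. A finite direct sum of such coactions is again a continuous coaction, so $\alpha$ is a continuous $\bG$-action on $\cL(H)$; and since its spectrum lies in the finite set of irreducible constituents of the products $\pi_{i}\otimes\ol{\pi_{j}}$, \Cref{def:normcontcqg}\Cref{item:unifcont} yields the asserted uniform continuity, the ``in particular'' clause.

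For the remaining implication I argue by contraposition: assuming $|I|=\infty$, I show $\alpha$ is not a continuous action. Two standard facts enter. First, for any continuous $\bG$-action $\beta$ on a unital $C^{*}$-algebra $A$ the algebraic direct sum $\bigoplus_{\pi}A_{\pi}$ of the spectral subspaces $A_{\pi}=\{a:\beta(a)\in A\otimes C_{\pi}\}$, where $C_{\pi}\subseteq\cC(\bG)$ is the finite-dimensional span of the matrix coefficients of $\pi$, is norm-dense in $A$ --- the usual Haar-state averaging. Second, for $A=\cL(H)$ and the conjugation $\alpha$, any $S\in\cL(H)_{\pi}$ satisfies $P_{\gamma}SP_{\delta}=0$ whenever $\pi$ is not an irreducible constituent of $\gamma\otimes\ol\delta$, where $P_{\gamma}$ is the orthogonal projection onto $H_{\gamma}$: since $\cU$ commutes with $P_{\gamma}\otimes 1$, the element $(P_{\gamma}\otimes 1)\alpha(S)(P_{\delta}\otimes 1)=\cU_{\gamma}\big(P_{\gamma}SP_{\delta}\otimes 1\big)\cU_{\delta}^{*}$ lies in $\cL(H_{\delta},H_{\gamma})\otimes C_{\gamma\otimes\ol\delta}$, which by orthogonality of matrix coefficients meets $\cL(H_{\delta},H_{\gamma})\otimes C_{\pi}$ only in $0$ unless $\pi\subseteq\gamma\otimes\ol\delta$, and then $P_{\gamma}SP_{\delta}=0$ as $\cU_{\gamma},\cU_{\delta}$ are unitary. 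Now build a bad operator: using $|I|=\infty$ select inductively pairwise-distinct $\gamma_{1},\delta_{1},\gamma_{2},\delta_{2},\dots\in I$ for which the tensor products $\gamma_{k}\otimes\ol{\delta_{k}}$ have pairwise-disjoint sets of irreducible constituents. At stage $k+1$, with $E$ the finite set of elements already used and $R$ the direct sum of all constituents seen so far, were there no admissible fresh pair $(\gamma,\delta)$, then fixing any fresh $\delta\in I\setminus E$ would force every remaining $\gamma\in I\setminus E$ with $\gamma\neq\delta$ to be a constituent of the finite-dimensional $R\otimes\delta$ (by Frobenius reciprocity, $\sigma\subseteq\gamma\otimes\ol\delta$ iff $\gamma\subseteq\sigma\otimes\delta$), contradicting $|I|=\infty$. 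Let $T=\bigoplus_{k}T_{k}$ with $T_{k}\colon H_{\delta_{k}}\to H_{\gamma_{k}}$ a fixed norm-one operator and $T$ vanishing on the orthogonal complement of $\bigoplus_{k}H_{\delta_{k}}$; as the blocks have pairwise orthogonal domains and ranges, $T\in\cL(H)$ with $\|T\|=1$. If $\alpha$ were a continuous $\bG$-action on $\cL(H)$, the first fact would supply $S\in\bigoplus_{\pi}\cL(H)_{\pi}$ with $\|T-S\|<1$, supported on finitely many $\pi$, say those in a finite set $F$; by the second fact and the disjointness, each $\pi\in F$ is a constituent of at most one $\gamma_{k}\otimes\ol{\delta_{k}}$, so $P_{\gamma_{k}}SP_{\delta_{k}}=0$ for all but finitely many $k$, and for such a $k$ we get $\|T-S\|\ge\|P_{\gamma_{k}}(T-S)P_{\delta_{k}}\|=\|T_{k}\|=1$ --- a contradiction.

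The part I expect to carry the real weight is the verification of the two ``standard facts'' against the paper's precise notion of a norm-continuous compact-quantum-group action: that ``continuous'' does entail the Podle\'s density condition and hence the norm-density of $\bigoplus_{\pi}A_{\pi}$, and that the block/support bookkeeping describing $\cL(H)_{\pi}$ --- transparent over the Hopf $*$-algebra $\cO(\bG)$ --- is executed correctly at the $C^{*}$-level, through the spectral projections $P_{\gamma}$ and the orthogonality relations for matrix coefficients. With these in hand, the inductive choice of the pairs $(\gamma_{k},\delta_{k})$ and the concluding approximation estimate are routine.
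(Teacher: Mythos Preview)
Your proof is correct and follows essentially the same route as the paper's \Cref{th:uniffinspec}: both directions hinge on the norm-density of $\bG$-finite elements (the paper's \Cref{th:spectralenough}, equivalently \cite[Corollary~1.6]{podl_symm}), and the hard implication is handled by constructing a block-diagonal norm-one operator supported on isotypic pairs whose tensor products $\gamma\otimes\overline{\delta}$ are pairwise disjoint, so that any $\bG$-finite $S$ misses all but finitely many blocks and hence sits at distance $\ge 1$ from $T$. The only cosmetic difference is that the paper works with a single sequence $(\beta_n)$ and the consecutive products $\beta_{n+1}\otimes\beta_n^*$, asserting the disjointness selection without detail, whereas you choose independent pairs $(\gamma_k,\delta_k)$ and spell out the inductive step via Frobenius reciprocity; your version is in fact the more carefully justified of the two.
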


In fact, slightly more can be said for plain compact groups (\Cref{cor:actonuhunif}): their continuous actions on $C^*$-algebras of the form $\cL(H)$ automatically have finitely many isotypic components.

Action extensibility across $A\lhook\joinrel\xrightarrow{}M(A)$ also singles out a class of compact quantum groups: those for which it is always possible. By \Cref{thn:a}, these are exactly the finite ones (\Cref{th:cqgisfin}):

\begin{corollaryN}\label{corn:b}
  A compact quantum group is finite if and only if its actions on $C^*$-algebras $A$ extend to actions on the corresponding multiplier algebras $A\lhook\joinrel\xrightarrow{} M(A)$ or von Neumann envelopes $A\lhook\joinrel\xrightarrow{} A^{**}$ (the latter regarded as $C^*$-algebras again).  \qedhere
\end{corollaryN}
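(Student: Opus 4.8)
The plan is to derive \Cref{corn:b} from \Cref{thn:a} together with the canonical identifications $M(\cK(H))\cong\cL(H)\cong\cK(H)^{**}$.

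For the implication \emph{finiteness $\Rightarrow$ extensibility}, suppose $\bG$ is finite, so that $\cC(\bG)$ is finite-dimensional. Then for every $C^*$-algebra $A$ there are natural isomorphisms
\[
  M(A)\underline{\otimes}\cC(\bG)\;\cong\;M\bigl(A\underline{\otimes}\cC(\bG)\bigr)
  \qquad\text{and}\qquad
  A^{**}\underline{\otimes}\cC(\bG)\;\cong\;\bigl(A\underline{\otimes}\cC(\bG)\bigr)^{**},
\]
because tensoring with a fixed finite-dimensional $C^*$-algebra is exact and commutes with both $M(-)$ and $(-)^{**}$ (both claims reduce to the corresponding statements for finitely many matrix algebras $M_k(-)$). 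A continuous action $\alpha\colon A\to A\underline{\otimes}\cC(\bG)$ then admits the strict extension $M(A)\to M(A)\underline{\otimes}\cC(\bG)$ and, by bitransposition, the normal extension $A^{**}\to A^{**}\underline{\otimes}\cC(\bG)$; both remain coassociative, and their Podle\'s density conditions — as well as norm-continuity, automatic over a finite quantum group — follow from the counit and antipode of the finite-dimensional Hopf $*$-algebra $\cC(\bG)$. So for finite $\bG$ every action extends across $A\hookrightarrow M(A)$ and across $A\hookrightarrow A^{**}$.

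For the converse I would argue contrapositively: assume $\bG$ infinite, so $\Irr(\bG)$ is infinite, and fix a unitary representation $\rho$ of $\bG$ on a Hilbert space $H$ with infinitely many isotypic components — for instance $\rho=\bigoplus_{\pi\in\Irr(\bG)}\pi$, or the regular representation on $L^2(\bG)$. Decompose $H=\bigoplus_\alpha H_\alpha$ into its finite-dimensional isotypic components, and let $\bG$ act on $\cK(H)$ by conjugation, $x\mapsto U_\rho(x\underline{\otimes}1)U_\rho^*$, with $U_\rho$ the unitary implementing $\rho$. This action on $\cK(H)$ is continuous: the finite-rank operators $\bigoplus_{\alpha,\beta}\cL(H_\alpha,H_\beta)$ are norm-dense in $\cK(H)$, each summand $\cL(H_\alpha,H_\beta)$ is a finite-dimensional $\bG$-invariant subspace (carrying $\pi_\beta\otimes\overline{\pi_\alpha}$), and density of the linear span of finite-dimensional invariant subspaces is exactly continuity for a compact quantum group action. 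Since $M(\cK(H))=\cL(H)$ and $\cK(H)^{**}=\cL(H)$ canonically, any continuous extension of this action to $M(\cK(H))$ or to $\cK(H)^{**}$ would be a continuous $\bG$-action on $\cL(H)$ restricting to conjugation by $U_\rho$ on $\cK(H)$, hence — by uniqueness of such extensions — conjugation by $U_\rho$ on all of $\cL(H)$. But \Cref{thn:a} says this action is continuous only when $\rho$ has finitely many isotypic components, against the choice of $\rho$. So the $\cK(H)$-action extends neither to $M(\cK(H))$ nor to $\cK(H)^{**}$, and $\bG$ cannot have been infinite.

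The analytic substance is already carried by \Cref{thn:a}; the rest is bookkeeping, and the step I would watch is the uniqueness just used — that a continuous $\bG$-action on $\cL(H)=M(\cK(H))=\cK(H)^{**}$ restricting to conjugation by $U_\rho$ on the ideal $\cK(H)$ is itself conjugation by $U_\rho$ — so that \Cref{thn:a} applies verbatim. For multiplier algebras this is the usual rigidity of extensions of equivariant morphisms: two such extensions agree after right multiplication by $\alpha(\cK(H))$, and $\alpha(\cK(H))\cdot\bigl(\cK(H)\underline{\otimes}\cC(\bG)\bigr)$ is dense in $\cK(H)\underline{\otimes}\cC(\bG)$ by the Podle\'s condition; for the bidual one argues likewise using weak$^*$-density of $\cK(H)$ and normality. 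I expect no genuine obstruction here, only the need to record these points with care.
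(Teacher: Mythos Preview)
Your proof is correct and matches the paper's approach: reduce to $A=\cK(H)$, identify $M(\cK(H))=\cK(H)^{**}=\cL(H)$, and invoke \Cref{thn:a}. Your explicit treatment of extension-uniqueness (which the paper leaves implicit) via non-degeneracy of $\alpha$ on the essential ideal $\cK(H)$ is apt, and in fact that single argument already covers both the multiplier and the bidual case, since they coincide here---no separate appeal to normality is needed.
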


For compact quantum groups finiteness is equivalent to discreteness. Dropping compactness at the cost of reverting to the classical setting, \Cref{th:discr-coprod-pres} expands (the analogue of) \Cref{corn:b} into a category-theoretic characterization of discreteness for locally compact groups. A short sample of that statement:

\begin{theoremN}\label{thn:c}
  A locally compact group $\bG$ is discrete if and only if the forgetful functor 
  \begin{equation*}
    \left(\text{$\bG$-actions on compact Hausdorff spaces}\right)
    \xrightarrow{\quad\cat{forget}\quad}
    \left(\text{compact Hausdorff spaces}\right)
  \end{equation*}
  creates \cite[Definition 13.17]{ahs} coproducts.  \qedhere
\end{theoremN}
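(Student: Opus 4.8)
First I would set up the dictionary I plan to use. Passing through Gelfand duality, continuous (i.e.\ jointly continuous) actions of $\bG$ on a compact Hausdorff space $Z$ correspond to norm-continuous actions of $\bG$ by $*$-automorphisms on $C(Z)$; and coproducts in compact Hausdorff spaces are, by reflectivity of that category in Tychonoff spaces, Stone--\v{C}ech compactifications of disjoint unions, so that $C\!\left(\coprod_i X_i\right)\cong\prod_i^{\ell^\infty}C(X_i)$, the $\ell^\infty$-product. With this in hand the statement becomes a question about when the product $\bG$-action on $\prod_i^{\ell^\infty}C(X_i)$ is norm-continuous.

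The easy direction: if $\bG$ is discrete, I would observe that every action of $\bG$ by homeomorphisms on any space is automatically jointly continuous, since $\bG\times Z=\bigsqcup_{g\in\bG}\{g\}\times Z$. So for compact Hausdorff $\bG$-spaces $(X_i)_{i\in I}$ the action on $Y:=\bigsqcup_i X_i$ (jointly continuous because $\bG\times Y=\bigsqcup_i\bG\times X_i$) extends by functoriality of $\beta$ to an action by homeomorphisms on $\beta Y$, and this action is jointly continuous by the previous remark. I would then verify, using density of $Y$ in $\beta Y$ and Hausdorffness of the targets, that $\beta Y$ with this action and the canonical maps $X_i\hookrightarrow\beta Y$ is a coproduct in the category of compact Hausdorff $\bG$-spaces, and that it is the \emph{unique} lift of the \cat{CHaus}-coproduct cocone (any $\bG$-action making the injections equivariant is forced on the dense subset $Y$, and two continuous actions on a Hausdorff space agreeing on a dense set coincide). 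These three facts are exactly what \cite[Definition 13.17]{ahs} asks of a functor that creates coproducts.

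For the converse I would assume $\bG$ is not discrete and build a family whose \cat{CHaus}-coproduct has no equivariant lift. Fix a neighbourhood base $\{U_\lambda\}_{\lambda\in\Lambda}$ at $e$ directed by reverse inclusion and pick $g_\lambda\in U_\lambda\setminus\{e\}$ --- possible precisely because $\bG$ is not discrete --- so that $g_\lambda\to e$. Take $X_\lambda:=\bG^{+}$, the one-point compactification of $\bG$ with the left-translation action fixing $\infty$; this is a compact Hausdorff $\bG$-space, the action being jointly continuous because $\bG$ is locally compact. By Urysohn's lemma choose $F_\lambda\in C(\bG^{+},[0,1])$ with $F_\lambda(e)=1$ and $F_\lambda(g_\lambda^{-1})=0$. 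Writing the induced action on $C\!\left(\coprod_\lambda X_\lambda\right)\cong\prod_\lambda^{\ell^\infty}C(\bG^{+})$ as $\alpha_g(f_\mu)_\mu=(f_\mu(g^{-1}\,\cdot\,))_\mu$, the element $F:=(F_\lambda)_\lambda$ satisfies $\lVert\alpha_{g_\lambda}F-F\rVert\ge\lvert F_\lambda(g_\lambda^{-1})-F_\lambda(e)\rvert=1$ for every $\lambda$, so $g\mapsto\alpha_g F$ fails to be continuous at $e$ even though $g_\lambda\to e$. Hence $\bG$ does not act norm-continuously on $C\!\left(\coprod_\lambda X_\lambda\right)$, equivalently it does not act jointly continuously on $\coprod_\lambda X_\lambda$. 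Since the only $\bG$-action that could make the coproduct injections $X_\lambda\hookrightarrow\coprod_\lambda X_\lambda$ equivariant is the one just considered (forced on the dense subset $\bigsqcup_\lambda X_\lambda$), the \cat{CHaus}-coproduct of $(X_\lambda)$ admits no lift to the category of compact Hausdorff $\bG$-spaces, so the forgetful functor does not create coproducts; contrapositively, creating coproducts forces $\bG$ to be discrete.

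I expect the main obstacle to be less the counterexample than the careful setup around it: pinning down the equivalence ``jointly continuous action on $Z$ $\Leftrightarrow$ norm-continuous action on $C(Z)$'' for compact Hausdorff $Z$ --- the implication that genuinely uses compactness (via a tube-lemma argument) is precisely what lets the example in the converse bite --- and, in the discrete case, checking all three clauses of \cite[Definition 13.17]{ahs} (existence of a lift, its uniqueness, and that it is again a colimit) rather than merely the existence of a coproduct. A secondary point to watch is that $\bG$ need not be first countable, which is why the counterexample is organized around a net over a neighbourhood base at $e$ rather than a sequence.
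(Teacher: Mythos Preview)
Your proposal is correct and follows essentially the same approach as the paper's proof of \Cref{th:discr-coprod-pres}: for the converse you, like the paper, index copies of an equivariant compactification of the left-translation $\bG$-space by a neighbourhood base at $e$, pick a net $g_\lambda\to e$, and glue separating functions to witness discontinuity of the action on the Stone--\v{C}ech compactification of the disjoint union. Your choice of the one-point compactification $\bG^{+}$ in place of the paper's de~Vries equivariant compactification $b\bG$ is a mild simplification, and your explicit verification of the uniqueness clause in ``creates'' is if anything more thorough than the paper's body text, which literally checks only coproduct \emph{preservation}.
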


There is a natural connection between a representation's having finitely many isotypic components (in short, the finiteness of its {\it spectrum}: \Cref{def:alphaisot}\Cref{item:def:alphaisot-spec}) and its {\it uniform continuity}, already mentioned explicitly in the statement of \Cref{thn:a}. This motivates both
\begin{itemize}
\item preliminaries on compact-quantum-group actions on Banach spaces in \Cref{subse:genunif}, meant to recapture enough of the common substance of unitary representations and actions on $C^*$-algebras while keeping some of the main tools those separate constructions afford (spectral spaces);

\item and a foray into the selfsame connection between spectrum finiteness and uniform continuity for actions of {\it classical} compact groups on Banach spaces in \Cref{subse:unifcls}.  
\end{itemize}

In reference to this latter point, \Cref{th:clscpct} gathers a number of equivalent characterizations of uniform continuity for such representations/actions. The lengthy statement would be unwieldy to summarize here, but it will be worth noting that many of those characterizations, relying on sometimes-non-trivial compact-group structure results, have the following general flavor: the $\bG$-action is uniformly continuous if and only if its restriction to such and such a class of subgroups of $\bG$ is.

This last remark, in turn, sensibly raises the issue of lifting finite-spectrum properties of representations from (quantum) subgroups to larger ambient (quantum) groups; that is the direction taken in \Cref{se:topgen}, where \Cref{th:fintopgen} moves back to quantum groups, this time {\it linearly reductive} (so a broader class than compact):

\begin{theoremN}\label{thn:d}
  Let $\bG$ be a linearly reductive quantum group and $\bH,\bK\le \bG$ linearly reductive quantum subgroups that {\it generate $\bG$ topologically} (\Cref{se:topgen}, preceding \Cref{def:jointmon}), with $\bK\trianglelefteq \bG$ normal.

  A $\bG$-representation has finitely many isotypic components if and only if its restrictions $\rho|_{\bH}$ and $\rho|_{\bK}$ both do.  \qedhere
\end{theoremN}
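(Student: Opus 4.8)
Throughout, write $C(\rho)\le\cO(\bG)$ for the coefficient coalgebra of a representation $\rho$, i.e.\ the smallest subcoalgebra through which the coaction factors; since $\cO(\bG)$ is cosemisimple, $C(\rho)=\bigoplus_{\pi\in\operatorname{spec}\rho}C(\pi)$, so $\rho$ has finitely many isotypic components exactly when $\dim C(\rho)<\infty$. Denoting by $\pi_{\bH}\colon\cO(\bG)\twoheadrightarrow\cO(\bH)$ and $\pi_{\bK}\colon\cO(\bG)\twoheadrightarrow\cO(\bK)$ the comorphisms of the inclusions, one has $C(\rho|_{\bH})=\pi_{\bH}(C(\rho))$ and $C(\rho|_{\bK})=\pi_{\bK}(C(\rho))$, because the matrix coefficients of a restriction are the images of the matrix coefficients. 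The ``only if'' implication is then trivial: images of finite-dimensional coalgebras are finite-dimensional. Everything therefore sits in the converse, which I would derive from a single claim.

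\medskip\noindent\emph{Claim.} The algebra map
\[
  \phi:=(\pi_{\bH}\otimes\pi_{\bK})\circ\Delta_{\bG}\colon\quad \cO(\bG)\longrightarrow \cO(\bH)\otimes\cO(\bK)
\]
is injective. Granting this, ``if'' follows immediately: $C(\rho)$ is a subcoalgebra, so $\Delta_{\bG}C(\rho)\subseteq C(\rho)\otimes C(\rho)$, whence $\phi(C(\rho))\subseteq\pi_{\bH}(C(\rho))\otimes\pi_{\bK}(C(\rho))=C(\rho|_{\bH})\otimes C(\rho|_{\bK})$; if $\rho|_{\bH}$ and $\rho|_{\bK}$ have finitely many isotypic components this last space is finite-dimensional, so $\dim C(\rho)=\dim\phi(C(\rho))\le\dim C(\rho|_{\bH})\cdot\dim C(\rho|_{\bK})<\infty$ and $\rho$ has finitely many isotypic components.

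\medskip To prove the Claim I would argue as follows. Being a composite of algebra maps, $\phi$ is an algebra map, so $\ker\phi$ is a two-sided ideal. Postcomposing $\phi$ with $\operatorname{id}\otimes\epsilon_{\bK}$ returns $\pi_{\bH}$ (since $\epsilon_{\bK}\pi_{\bK}=\epsilon_{\bG}$ and $(\operatorname{id}\otimes\epsilon_{\bG})\Delta_{\bG}=\operatorname{id}$), so $\ker\phi\subseteq\ker\pi_{\bH}$; symmetrically $\ker\phi\subseteq\ker\pi_{\bK}$; hence $\ker\phi\subseteq\ker\pi_{\bH}\cap\ker\pi_{\bK}$. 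Now normality of $\bK$ enters: because $\bK\trianglelefteq\bG$, the adjoint coaction makes $\bH$ act on $\bK$, so the semidirect product $\bH\ltimes\bK$ is a quantum group with $\cO(\bH\ltimes\bK)=\cO(\bH)\otimes\cO(\bK)$ as an algebra, and multiplication $\bH\ltimes\bK\to\bG$ is a morphism of quantum groups whose comorphism is precisely $\phi$. Consequently $\ker\phi$ is the kernel of a Hopf algebra map, i.e.\ a \emph{Hopf} ideal, not merely an ideal. A Hopf ideal contained in $\ker\pi_{\bH}\cap\ker\pi_{\bK}$ lies inside the largest Hopf ideal contained in that intersection, and that largest Hopf ideal is $\{0\}$ exactly because $\bH$ and $\bK$ generate $\bG$ topologically (this being the content of the notion recorded around \Cref{def:jointmon}: the smallest quantum subgroup of $\bG$ containing $\bH$ and $\bK$ corresponds to that largest Hopf ideal, and equals $\bG$ precisely when it vanishes). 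Hence $\ker\phi=\{0\}$.

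\medskip The entire argument is formal except for the assertion that $\ker\phi$ is a \emph{coideal} — equivalently, a Hopf ideal — and this is exactly where $\bK\trianglelefteq\bG$ is used: one must exhibit the conjugation coaction $\cO(\bK)\to\cO(\bK)\otimes\cO(\bG)$, assemble the semidirect product, and identify $\phi$ with its structure morphism; or, avoiding that, verify $\Delta_{\bG}(\ker\phi)\subseteq\ker\phi\otimes\cO(\bG)+\cO(\bG)\otimes\ker\phi$ directly from the Hopf-algebraic description of normality (that $\ker\pi_{\bK}$ is generated by the augmentation ideal of the Hopf subalgebra of $\bK$-coinvariants). Pinning down this step, together with matching the paper's definition of ``generates topologically'' to the vanishing of the largest Hopf ideal inside $\ker\pi_{\bH}\cap\ker\pi_{\bK}$, is all the argument really needs; the identifications of coefficient coalgebras with spectra, the inclusion $\ker\phi\subseteq\ker\pi_{\bH}\cap\ker\pi_{\bK}$, and the dimension count are routine. (It is worth noting that the normality hypothesis is genuinely used here to make $\ker\phi$ a Hopf ideal — without it $\phi$ can fail to be injective even when $\bH,\bK$ generate $\bG$, although the statement of the theorem may still hold and would then require a different, more laborious argument, e.g.\ a Clifford-theoretic reduction along an exact sequence.)
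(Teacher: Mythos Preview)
Your approach is correct and genuinely different from the paper's. The paper argues Clifford-theoretically: it invokes the exact sequence $\cO(\bQ)\hookrightarrow\cO(\bG)\twoheadrightarrow\cO(\bK)$ attached to the normal $\bK$, cites an external result to the effect that finiteness of the $\bK$-spectrum partitions the $\bG$-irreducibles of $V$ into finitely many classes under $V_i\sim V_j\Leftrightarrow V_i\le V_j\otimes\cO(\bQ)$, then uses topological generation to show $\cO(\bQ)\hookrightarrow\cO(\bH)$, so that finiteness of the $\bH$-spectrum forces each class to be finite. Your route bypasses all of that by proving the single injectivity statement for $\phi=(\pi_{\bH}\otimes\pi_{\bK})\Delta$, which yields the sharper bound $\dim C(\rho)\le\dim C(\rho|_{\bH})\cdot\dim C(\rho|_{\bK})$ and is entirely self-contained.

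Two remarks on the deferred step. First, you do not actually need to assemble a bona fide semidirect-product Hopf algebra on $\cO(\bH)\otimes\cO(\bK)$: it suffices to produce \emph{linear} maps witnessing that $\ker\phi$ is a Hopf ideal. Using the (right) adjoint coaction $\rho:\cO(\bK)\to\cO(\bK)\otimes\cO(\bH)$, $\pi_{\bK}(y)\mapsto\pi_{\bK}(y_2)\otimes\pi_{\bH}(S(y_1)y_3)$ (well-defined by normality), the map $\Delta'(a\otimes b):=a_1\otimes b_{1[0]}\otimes a_2 b_{1[1]}\otimes b_2$ satisfies $\Delta'\phi=(\phi\otimes\phi)\Delta$ by a direct Sweedler computation, so $\ker\phi$ is a coideal. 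Second, $S$-stability is a separate check you should not omit: setting $\tilde\phi:=(\pi_{\bK}\otimes\pi_{\bH})\Delta$ one has $\phi S=(S_{\bH}\otimes S_{\bK})\,\tau\,\tilde\phi$, and the map $\Psi(a\otimes b):=b_{[0]}\otimes a\,b_{[1]}$ satisfies $\Psi\phi=\tilde\phi$ (again a short Sweedler calculation collapsing $x_1 S(x_2)$), whence $\ker\phi\subseteq\ker\tilde\phi$ and $S(\ker\phi)\subseteq\ker\phi$. With those two computations in hand your argument is complete; framing it as ``build $\bH\ltimes\bK$ and identify $\phi$ as its structure map'' is suggestive but riskier, since the full smash-coproduct Hopf structure requires compatibility conditions on the adjoint coaction that are more than you need.
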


That normality (or something like it) is needed is clear from \Cref{ex:genunif}, say. 

%%%%%%%%%%%%%%%%%%%%%%%%%%%%%%%%
\subsection*{Acknowledgements}

I am grateful for numerous comments and tips on the contents of this note at various stages, from M. Brannan, M. Daws, K.H. Hofmann, P.M. So{\l}tan, A. Skalski, N. Spronk, A. Viselter and naturally, the anonymous referee.

The work is partially supported by NSF grant DMS-2001128, and is part of the project Graph Algebras partially supported by EU grant HORIZON-MSCA- SE-2021 Project 101086394.

%%%%%%%%%%%%%%%%%%%%%%%%%%%%%%%%
%%%%%%%%%%%%%%%%%%%%%%%%%%%%%%%%
\section{Preliminaries}\label{se.prel}

The symbol `$\underline{\otimes}$' denotes the {\it minimal} (or {\it spatial} $C^*$-algebra tensor product \cite[\S 12]{pls-bk}, \cite[Definition IV.4.8]{tak1}, etc.). The symbol `$\otimes$' denotes the plain algebraic tensor product when one of the tensorands is not equipped with any analytic structure. We write
\begin{itemize}

\item $\cL(E,F)$ for bounded linear maps between Banach spaces $E$ and $F$, with $\cL(E,E)$ abbreviated to $\cL(E)$.

\item $\cK(E)$ for compact operators on a Banach (hence also Hilbert) space $E$.
  
\item $M(A)$ for the {\it multiplier algebra} \cite[\S II.7.3]{blk} of a $C^*$-algebra $A$.
  
\item $\cO(\bG)\le \cC(\bG)$ for the unique dense Hopf $*$-subalgebra (the $\cA$ of \cite[Theorem 1.2]{wor-cqg} and $\cA=C[G]$ in \cite[Theorem 1.6.7]{NeTu13}). These are {\it CQG algebras} \cite[Definition 2.2]{dk_cqg}: particularly well-behaved {\it cosemisimple} \cite[Definition 2.4.1]{mont} complex Hopf $*$-algebras.

\item $\Irr(\bG)$ for the (isomorphism classes of) unitary irreducible $\bG$-representations. We will often apply tensor operations directly to the symbols standing for such classes, with `$\le$' denoting the relation of being a summand. Thus:
  \begin{equation*}
    \gamma\le \alpha\otimes\beta,\quad \alpha,\beta,\gamma\in\Irr(\bG)
  \end{equation*}
  means that tensoring representations of classes $\alpha$ and $\beta$ produces a $\gamma$ summand, $\alpha^*$ denotes the representation dual (or {\it contragredient} \cite[Definition 1.3.8]{NeTu13}) to $\alpha$, etc.
  
\item $L^2(\bG)$ for the Hilbert space carrying the GNS representation associated to the {\it Haar state} \cite[\S 1.2]{NeTu13} $h\in \cC(\bG)^*$.
 
  % % \item More generally, $L^p(\bG)$, $1\le p\le \infty$ for the {\it non-commutative $L^p$ space} associated to the Haar state, as usually done (e.g. \cite[\S IX.2, Theorem 2.13]{tak2}): the completion of $\cC(\bG)$ for the seminorm
  % %   \begin{equation*}
  % %     \|x\|_p:=h\left(|x|^p\right)^{\frac 1p}
  % %     ,\quad
  % %     |x|:=\text{absolute value \cite[\S I.4.2.3]{blk} of $x$}. 
  % %   \end{equation*}
  % %   
  
\end{itemize}

Since in much of the sequel $\cO(\bG)$ plays rather a more important role than $\cC(\bG)$, the phrase {\it (compact) quantum subgroup} consistently refers to Hopf $*$-algebra quotients $\cO(\bG)\to \cO(\bH)$. 

%\newpage

%%%%%%%%%%%%%%%%%%%%%%%%%%%%%%%%
%%%%%%%%%%%%%%%%%%%%%%%%%%%%%%%%
\section{Uniform CQG actions on Banach spaces}\label{se:actban}

%%%%%%%%%%%%%%%%%%%%%%%%%%%%%%%%
\subsection{Generalities on representations and uniformity}\label{subse:genunif}

Let $\bG$ be a compact quantum group. The appropriate uniform context for analyzing both unitary $\bG$-representations and $\bG$-actions on $C^*$-algebras, both of which feature extensively in the literature, seems to be the following setup:

\begin{definition}\label{def:cqgactban}
  Let $E$ be a Banach space (equipped with a norm we typically suppress notationally) and $\bG$ a compact quantum group. A {\it $\bG$-action (or $\bG$-representation) on $E$} is a continuous morphism
  \begin{equation}\label{eq:actone}
    E
    \xrightarrow{\quad\rho\quad}
    \cC(\bG)\otimes_{\varepsilon}E
  \end{equation}
  into the {\it injective (or minimal)} \cite[\S 3.1]{ryan_ban} Banach-space tensor product such that
  \begin{enumerate}[(a)]
  \item\label{item:coassoc} the diagram
    \begin{equation}\label{eq:coassocban}
      \begin{tikzpicture}[auto,baseline=(current  bounding  box.center)]
        \path[anchor=base] 
        (0,0) node (l) {$E$}
        +(2,.5) node (u) {$\cC(\bG)\otimes_{\varepsilon} E$}
        +(2,-.5) node (d) {$\cC(\bG)\otimes_{\varepsilon} E$}
        +(6,0) node (r) {$\cC(\bG)\otimes_{\varepsilon}\cC(\bG)\otimes_{\varepsilon} E$}
        ;
        \draw[->] (l) to[bend left=6] node[pos=.5,auto] {$\scriptstyle \rho$} (u);
        \draw[->] (u) to[bend left=6] node[pos=.5,auto] {$\scriptstyle (\Delta\otimes\id)\rho$} (r);
        \draw[->] (l) to[bend right=6] node[pos=.5,auto,swap] {$\scriptstyle \rho$} (d);
        \draw[->] (d) to[bend right=6] node[pos=.5,auto,swap] {$\scriptstyle (\id\otimes\rho)\rho$} (r);
      \end{tikzpicture}
    \end{equation}
    commutes;

  \item\label{item:dense} and
    \begin{equation*}
      \overline{
        \mathrm{span}\left\{(x\otimes\id)\rho(v)\ |\ x\in \cC(\bG),\ v\in E\right\}
      }
      =
      \cC(\bG)\otimes_{\varepsilon}E.
    \end{equation*}
  \end{enumerate}  
\end{definition}

Note that \Cref{eq:coassocban} does indeed make sense, given that
\begin{itemize}
\item the minimal $C^*$ tensor product $\underline{\otimes}$ into which the comultiplication
  \begin{equation*}
    \cC(\bG)\xrightarrow{\quad\Delta\quad}\cC(\bG)\underline{\otimes}\cC(\bG)
  \end{equation*}
  is assumed to take values dominates the injective Banach-space norm (e.g. by \cite[Proposition IV.2.2 and discussion preceding Theorem IV.4.9]{tak1}) and hence can also be regarded as a map into $\cC(\bG)\otimes_{\varepsilon}\cC(\bG)$;

\item and the injective Banach-space tensor product is functorial \cite[Proposition 3.2]{ryan_ban} (with respect to continuous linear maps between Banach spaces), so the various compositions are well defined.
\end{itemize}

The {\it spectral decomposition} results familiar from the theory of CQG actions on $C^*$-algebras transport over to the Banach-space setup. To make sense of this, recall the idempotent operators
\begin{equation*}
  E\xrightarrow{\quad \pi^{\alpha}\quad}E,\quad \alpha\in \Irr(\bG)
\end{equation*}
employed in the proof of \cite[Theorem 1.5]{podl_symm}, whose images are, respectively, the {\it $\alpha$-spectral subspaces} attached to the action:

\begin{itemize}
\item One first defines appropriate continuous functionals $\rho^{\alpha}$ (\cite[p.657]{wor}, \cite[p.3]{podl_symm}) on $\cC(\bG)$ vanishing on the spaces of coefficients associated to $\beta\ne \alpha\in\Irr(\bG)$. In the notation of \cite{podl_symm},
  \begin{equation}\label{eq:rhoalpha}
    \rho^{\alpha}(u^{\beta}_{ij}) = \delta_{\alpha\beta}\delta_{ij}
    ,\quad
    \delta_{\cdot}:=\text{Kronecker delta}.
  \end{equation}

\item The idempotent $\pi^{\alpha}$ can then be defined by (the commutativity of)
  \begin{equation}\label{eq:pirho}
    \begin{tikzpicture}[auto,baseline=(current  bounding  box.center)]
      \path[anchor=base] 
      (0,0) node (l) {$E$}
      +(2,.5) node (u) {$\cC(\bG)\otimes E$}
      +(4,0) node (r) {$E$.}
      ;
      \draw[->] (l) to[bend left=6] node[pos=.5,auto] {$\scriptstyle $} (u);
      \draw[->] (u) to[bend left=6] node[pos=.5,auto] {$\scriptstyle \rho^{\alpha}\otimes\id$} (r);
      \draw[->] (l) to[bend right=6] node[pos=.5,auto,swap] {$\scriptstyle \pi^{\alpha}$} (r);
    \end{tikzpicture}
  \end{equation}    
\end{itemize}
All of this goes through for $\otimes_{\varepsilon}$ as well as it does for $\underline{\otimes}$, hence. In particular, the proof of \cite[Theorem 1.5]{podl_symm} delivers

\begin{theorem}\label{th:spectralenough}
  Given an action \Cref{eq:actone} of a compact quantum group on a Banach space, the (algebraic) direct sum
  \begin{equation}\label{eq:gfinvects}
    \bigoplus_{\alpha\in\Irr(\bG)}\left(E^{\alpha}:=\mathrm{im}~\pi^{\alpha}\right)\subseteq E
  \end{equation}
  is dense.  \qedhere
\end{theorem}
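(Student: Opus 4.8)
The plan is to transport the proof of \cite[Theorem~1.5]{podl_symm} essentially verbatim, noting that it uses only three features of the minimal $C^*$-tensor product, all shared by the injective Banach-space tensor product: functoriality \cite[Proposition~3.2]{ryan_ban}; the fact that a functional sliced into either leg gives a contractive map (this is, in effect, the definition of the injective norm); and the fact that when one tensorand is finite-dimensional $\otimes_\varepsilon$ collapses to the algebraic tensor product $\otimes$. Write $C(\alpha)\subseteq\cO(\bG)$ for the finite-dimensional span of the matrix coefficients $u^\alpha_{ij}$, so $\cO(\bG)=\bigoplus_{\alpha\in\Irr(\bG)}C(\alpha)$, and recall two standard facts about the Haar state $h\in\cC(\bG)^*$: it kills $C(\alpha)$ for every non-trivial $\alpha$, and $\cC(\bG)$ carries bounded functionals $\rho^\alpha_{ij}$ with $\rho^\alpha_{ij}(u^\beta_{kl})=\delta_{\alpha\beta}\delta_{ik}\delta_{jl}$, refining the $\rho^\alpha$ of \Cref{eq:rhoalpha} via $\rho^\alpha=\sum_i\rho^\alpha_{ii}$. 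A one-line convolution computation ($\rho^\alpha*\rho^\beta=\delta_{\alpha\beta}\rho^\alpha$, first on the dense $\cO(\bG)$ and hence on $\cC(\bG)$) shows the $\pi^\alpha$ of \Cref{eq:pirho} are orthogonal idempotents, so \Cref{eq:gfinvects} is genuinely an internal algebraic direct sum and only its density is in question.

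Fix $v\in E$ and a tolerance $\varepsilon_0>0$. By \Cref{def:cqgactban}\Cref{item:dense} and density of $\cO(\bG)$ in $\cC(\bG)$, there is a finite sum $\sum_i(x_i\otimes\id)\rho(w_i)$ with $x_i\in\cO(\bG)$ approximating $1\otimes v$ to within $\varepsilon_0$ in $\cC(\bG)\otimes_\varepsilon E$. Applying the contraction $h\otimes\id\colon\cC(\bG)\otimes_\varepsilon E\to E$, and using $h(1)=1$, we land within $\varepsilon_0$ of $v$ at $\sum_i v_i'$, where $v_i':=(\omega_i\otimes\id)\rho(w_i)$ and $\omega_i:=h(x_i\,\cdot\,)\in\cC(\bG)^*$. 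The point of slicing by $h$ rather than by an arbitrary state is that each $\omega_i$ is then \emph{supported on finitely many isotypes}: if $x_i\in\bigoplus_{\gamma\in G_i}C(\gamma)$ with $G_i$ finite, then $\omega_i(u^\beta_{kl})=h(x_iu^\beta_{kl})$ can be nonzero only when $\1\le\gamma\otimes\beta$ for some $\gamma\in G_i$, i.e. only when $\beta$ lies in the finite set $F:=\bigcup_i\{\gamma^*:\gamma\in G_i\}$.

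It remains to check $v_i'\in\bigoplus_\alpha E^\alpha$. Coassociativity \Cref{def:cqgactban}\Cref{item:coassoc} gives $\rho(v_i')=\bigl((\omega_i\otimes\id)\Delta\otimes\id\bigr)\rho(w_i)$, and the bounded operator $(\omega_i\otimes\id)\Delta$ on $\cC(\bG)$ carries $\cO(\bG)$, hence by density all of $\cC(\bG)$, into the finite-dimensional (so closed) subspace $\bigoplus_{\beta\in F}C(\beta)$; thus $\rho(v_i')$ lies in the \emph{algebraic} tensor product $\bigl(\bigoplus_{\beta\in F}C(\beta)\bigr)\otimes E$. Writing $\rho(v_i')=\sum_{\beta\in F}\sum_{k,l}u^\beta_{kl}\otimes v_{i,\beta,kl}'$ (the $u^\beta_{kl}$, $\beta\in F$, being linearly independent, the coordinates $v_{i,\beta,kl}'\in E$ are well defined, extractable by slicing against the $\rho^\beta_{kl}$) and feeding this back into coassociativity, comparison of coefficients in the first two legs yields $\rho(v_{i,\beta,kl}')=\sum_m u^\beta_{lm}\otimes v_{i,\beta,km}'\in C(\beta)\otimes E$, whence $\pi^\beta(v_{i,\beta,kl}')=v_{i,\beta,kl}'$ and so $v_{i,\beta,kl}'\in E^\beta$. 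Finally, each coordinate is the slice $v_{i,\beta,kl}'=\bigl((\omega_i*\rho^\beta_{kl})\otimes\id\bigr)\rho(w_i)$, and the identity $\omega_i*e_F=\omega_i$ on $\cC(\bG)$ — valid because both sides agree on $\cO(\bG)$ (a quick computation using that $\omega_i$ vanishes off $F$), where $e_F:=\sum_{\beta\in F}\rho^\beta$ — gives $v_i'=\sum_{\beta\in F}\sum_k v_{i,\beta,kk}'\in\bigoplus_\alpha E^\alpha$. Hence $v$ is within $\varepsilon_0$ of $\bigoplus_\alpha E^\alpha$, and $\varepsilon_0$ was arbitrary.

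I expect the only real content to be the decision to average against the Haar state: it is precisely the vanishing of $h$ on the non-trivial $C(\alpha)$ that cuts the approximants down to finite spectrum, and without it the partial sums $\sum_{\beta\in F}\pi^\beta$ need not converge. Everything else is bookkeeping indifferent to the choice of (functorial, functional-sliceable) tensor product — the repeated identification of two bounded functionals that agree on the dense subalgebra $\cO(\bG)$ (to legitimise the convolution identities for $\rho^\alpha$, $e_F$ and $\omega_i$) and the collapse of $\otimes_\varepsilon$ to $\otimes$ on finite-dimensional factors — which is exactly why the argument of \cite[Theorem~1.5]{podl_symm} carries over. The fussiest point in a full write-up is making the coefficient-comparison step airtight over a Banach space, i.e.\ confirming that $(\Delta\otimes\id)\rho=(\id\otimes\rho)\rho$ may be restricted to the algebraic piece $\bigl(\bigoplus_{\beta\in F}C(\beta)\bigr)\otimes E$ and that linear independence of the $u^\beta_{kl}$ lets one read off the coordinates $v_{i,\beta,kl}'$ unambiguously.
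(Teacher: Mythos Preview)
Your proposal is correct and takes exactly the approach the paper intends: the paper does not spell out a proof at all, merely remarking (just before the statement) that ``the proof of \cite[Theorem 1.5]{podl_symm} delivers'' once one observes the construction of the $\pi^{\alpha}$ goes through for $\otimes_{\varepsilon}$ as well as for $\underline{\otimes}$. You have faithfully and carefully unpacked that transported argument, including the key Haar-averaging step and the finite-support bookkeeping; nothing further is needed.
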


\begin{definition}\label{def:alphaisot}
  \begin{enumerate}[(1)]
  \item\label{item:def:alphaisot-isotyp} We will refer to the subspace $E^{\alpha}\le E$ of \Cref{th:spectralenough} as the {\it $\alpha$-isotypic component} of the action (or of $E$, slightly abusively).
    
    They are closed and in fact {\it complemented} \cite[\S III.13]{conw_fa} subspaces of $E$, being ranges of continuous idempotent operators $\pi^{\alpha}$.

  \item\label{item:def:alphaisot-spec} The set of $\alpha$ for which $E^{\alpha}\ne \{0\}$ is the {\it spectrum} of $\rho$.

  \item\label{item:def:alphaisot-gfin} The elements of the algebraic direct cum on the left-hand side of \Cref{eq:gfinvects} are the {\it $\bG$-finite} elements of $E$ (with respect to $\rho$).

    This transports the terminology of \cite[Definition 3.1]{hm4} into the present quantum setting.
  \end{enumerate}
\end{definition}

When $E$ is additionally a unital $C^*$-algebra $A$, it seems reasonable to ask of an action that it be a $C^*$-morphism 
\begin{equation*}
  A\to \cC(\bG)\underline{\otimes} A,
\end{equation*}
as one usually does \cite[Definition 1.4]{podl_symm}. The other constraints will then follow, however, from the apparently weaker analogous ones of \Cref{def:cqgactban}; this, despite the appearance of the minimal {\it Banach}-space tensor product, which is nowhere featured in \cite[Definition 1.4]{podl_symm}.

\begin{proposition}\label{pr:sameactoncastalg}
  Let $\bG$ be a compact quantum group and $A$ a unital $C^*$-algebra. A $C^*$-morphism
  \begin{equation*}
    A\xrightarrow{\quad\alpha\quad}\cC(\bG)\underline{\otimes} A
  \end{equation*}
  is an action in the sense of \cite[Definition 1.4]{podl_symm} if and only if the composition
  \begin{equation*}
    \begin{tikzpicture}[auto,baseline=(current  bounding  box.center)]
      \path[anchor=base] 
      (0,0) node (l) {$A$}
      +(2,.5) node (u) {$\cC(\bG)\underline{\otimes} A$}
      +(5,0) node (r) {$\cC(\bG)\otimes_{\varepsilon} A$}
      ;
      \draw[->] (l) to[bend left=6] node[pos=.5,auto] {$\scriptstyle \alpha$} (u);
      \draw[->] (u) to[bend left=6] node[pos=.5,auto] {$\scriptstyle $} (r);
      \draw[->] (l) to[bend right=6] node[pos=.5,auto,swap] {$\scriptstyle \rho$} (r);
    \end{tikzpicture}
  \end{equation*}
  is an a $\bG$-action on the Banach space $A$ in the sense of \Cref{def:cqgactban}. 
\end{proposition}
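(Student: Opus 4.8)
The plan is to prove both implications by carefully tracking what each condition says and exploiting the fact that the comultiplication $\Delta$ takes values in $\underline{\otimes}$, which dominates $\otimes_{\varepsilon}$. First I would fix notation: write $\iota\colon \cC(\bG)\underline{\otimes}A \to \cC(\bG)\otimes_{\varepsilon}A$ for the canonical contractive (indeed injective) inclusion arising from the domination of norms recalled just after \Cref{def:cqgactban}, so that $\rho = \iota\circ\alpha$. A Podle\'s-style action \cite[Definition 1.4]{podl_symm} of $\bG$ on $A$ is a $C^*$-morphism $\alpha$ satisfying coassociativity $(\Delta\underline{\otimes}\id)\alpha = (\id\underline{\otimes}\alpha)\alpha$ together with the density (Podle\'s) condition $\overline{\spn}\{(x\underline{\otimes}1)\alpha(a) : x\in\cC(\bG),\ a\in A\} = \cC(\bG)\underline{\otimes}A$.

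For the forward direction, assume $\alpha$ is a Podle\'s action. To get coassociativity of $\rho$ in the sense of \Cref{eq:coassocban}, I would apply $\iota$ componentwise: there is a compatible inclusion $\cC(\bG)\underline{\otimes}\cC(\bG)\underline{\otimes}A \to \cC(\bG)\otimes_{\varepsilon}\cC(\bG)\otimes_{\varepsilon}A$ (functoriality and associativity of $\otimes_{\varepsilon}$, using \cite[Proposition 3.2]{ryan_ban}), and the two composites $(\Delta\otimes\id)\rho$ and $(\id\otimes\rho)\rho$ are obtained from $(\Delta\underline{\otimes}\id)\alpha$ and $(\id\underline{\otimes}\alpha)\alpha$ by postcomposing with this injection; since the two $\underline{\otimes}$-valued maps agree and the injection is, well, injective, the $\otimes_{\varepsilon}$-valued maps agree too. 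The density condition \Cref{def:cqgactban}\Cref{item:dense} follows because the Podle\'s density condition already makes the span dense in $\cC(\bG)\underline{\otimes}A$, which is dense in $\cC(\bG)\otimes_{\varepsilon}A$ (the inclusion $\iota$ has dense range: algebraic tensors are dense in both). Continuity of $\rho$ is automatic since $\alpha$ is a $C^*$-morphism, hence contractive, and $\iota$ is contractive.

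The reverse direction is the substantive one: assume $\alpha$ is merely a $C^*$-morphism and that $\rho = \iota\circ\alpha$ is a $\bG$-action on the Banach space $A$; I must recover the $\underline{\otimes}$-level coassociativity and density. For coassociativity, the point is that both $(\Delta\underline{\otimes}\id)\alpha$ and $(\id\underline{\otimes}\alpha)\alpha$ are genuine $C^*$-morphisms $A \to \cC(\bG)\underline{\otimes}\cC(\bG)\underline{\otimes}A$ (here $(\Delta\underline{\otimes}\id)$ makes sense because $\Delta$ lands in $\underline{\otimes}$, and $(\id\underline{\otimes}\alpha)$ makes sense because $\alpha$ does), and after composing with the injection $\cC(\bG)\underline{\otimes}\cC(\bG)\underline{\otimes}A\hookrightarrow\cC(\bG)\otimes_{\varepsilon}\cC(\bG)\otimes_{\varepsilon}A$ they become equal by hypothesis \Cref{eq:coassocban}; injectivity of that inclusion then forces equality already in $\underline{\otimes}$. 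For the Podle\'s density condition, I would argue that the span $S:=\spn\{(x\underline{\otimes}1)\alpha(a)\}$, whose $\otimes_{\varepsilon}$-closure is all of $\cC(\bG)\otimes_{\varepsilon}A$ by \Cref{def:cqgactban}\Cref{item:dense}, is in fact already a subspace of $\cC(\bG)\underline{\otimes}A$, and that its closure there must be everything: indeed $\overline{S}^{\underline{\otimes}}$ is a closed subspace of $\cC(\bG)\underline{\otimes}A$ whose image in $\cC(\bG)\otimes_{\varepsilon}A$ is dense, and since $\cC(\bG)\underline{\otimes}A$ itself sits densely (but as a distinct, generally strictly finer-normed space), I cannot conclude purely formally — this is where I expect the main obstacle. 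The way around it: the spectral decomposition machinery. By \Cref{th:spectralenough} the $\bG$-finite vectors $\bigoplus_\alpha A^\alpha$ are dense in $A$ for the norm topology, and for each $\alpha\in\Irr(\bG)$ the projection $\pi^\alpha$ has finite-dimensional-over-$\cC(\bG)$ image in the sense that $\rho(A^\alpha)\subseteq \cC(\bG)\otimes A^\alpha$ lands in the algebraic tensor product with the (finite-dimensional coefficient) space spanned by matrix coefficients $u^\alpha_{ij}$; on such vectors $\rho$ and $\alpha$ literally coincide as maps into the algebraic tensor product $\cO(\bG)\otimes A^\alpha\subseteq \cC(\bG)\underline{\otimes}A$, and the Podle\'s density span restricted to $\bG$-finite vectors already produces, for each $\alpha$, all of $\cC(\bG)\otimes A^\alpha$ (this is the standard fact that $(\rho^\beta\otimes\id)$-type slices recover the coefficient structure). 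Taking the closure and using density of $\bigoplus_\alpha A^\alpha$ in $A$ together with continuity of $\alpha$ then yields $\overline{S}^{\underline{\otimes}} = \cC(\bG)\underline{\otimes}A$, completing the proof. I would present the $\bG$-finiteness argument as the technical heart, citing \Cref{th:spectralenough} and the construction \Cref{eq:rhoalpha}--\Cref{eq:pirho}, and keep the two injectivity-of-$\iota$ arguments brief.
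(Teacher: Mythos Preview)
Your proposal is correct and rests on the same key input as the paper's proof: the spectral decomposition \Cref{th:spectralenough}, giving a dense subspace of $\bG$-finite vectors on which the action lands in the algebraic tensor product $\cO(\bG)\otimes A$. The paper packages the reverse implication more economically than you do: instead of treating coassociativity separately via injectivity of $\iota$ and then unpacking the density argument by hand on $\bG$-finite elements, it simply notes that $\bigoplus_{\alpha}A^{\alpha}$ is a dense $*$-subalgebra of $A$ (and an $\cO(\bG)$-comodule algebra), and then invokes \cite[Corollary 1.6]{podl_symm} to recover the Podle\'s action in one stroke---that corollary already encodes both the coassociativity and the density you work out explicitly. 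Your route is longer but entirely sound, and has the minor advantage of making the coassociativity step self-contained (pure injectivity of the comparison map $\underline{\otimes}\hookrightarrow\otimes_{\varepsilon}$) rather than buried inside the cited corollary.
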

\begin{proof}
  One implication is clear: the density and coassociativity of the original $\alpha$ entail the analogous properties of $\rho$, given that $\underline{\otimes}$ surjects onto $\otimes_{\varepsilon}$. It is thus the converse that is of interest, and our focus for the duration of the proof.

  % % To that end, recall the idempotent operators
  % % \begin{equation*}
  % %   A\xrightarrow{\quad \pi^{\alpha}\quad}A,\quad \alpha\in \Irr(\bG)
  % % \end{equation*}
  % % employed in the proof of \cite[Theorem 1.5]{podl_symm}, whose images are, respectively, the {\it $\alpha$-spectral subspaces} attached to the action:
  % % \begin{itemize}
  % % \item One first defines appropriate continuous functionals $\rho^{\alpha}$ (\cite[p.657]{wor}, \cite[p.3]{podl_symm}) on $\cC(\bG)$ vanishing on the spaces of coefficients associated to $\beta\ne \alpha\in\Irr(\bG)$;
  % % \item then defining $\pi^{\alpha}$ by (the commutativity of)
  % %   \begin{equation*}
  % %     \begin{tikzpicture}[auto,baseline=(current  bounding  box.center)]
  % %       \path[anchor=base] 
  % %       (0,0) node (l) {$A$}
  % %       +(2,.5) node (u) {$\cC(\bG)\otimes A$}
  % %       +(4,0) node (r) {$A$.}
  % %       ;
  % %       \draw[->] (l) to[bend left=6] node[pos=.5,auto] {$\scriptstyle $} (u);
  % %       \draw[->] (u) to[bend left=6] node[pos=.5,auto] {$\scriptstyle \id\otimes\rho^{\alpha}$} (r);
  % %       \draw[->] (l) to[bend right=6] node[pos=.5,auto,swap] {$\scriptstyle \pi^{\alpha}$} (r);
  % %     \end{tikzpicture}
  % %   \end{equation*}    
  % % \end{itemize}
  % % All of this goes through for $\otimes_{\varepsilon}$ as well as it does for $\underline{\otimes}$, so that the proof of \cite[Theorem 1.5]{podl_symm} will show that the span of
  % % 

  It follows from \Cref{th:spectralenough} that the span of
  \begin{equation*}
    A^{\alpha}:=\mathrm{im}~\pi^{\alpha},\quad \alpha\in\Irr(\bG)
  \end{equation*}
  is a dense $*$-subalgebra in $A$. This then recovers an action in the sense of \cite[Definition 1.4]{podl_symm} by \cite[Corollary 1.6]{podl_symm}, finishing the proof.
\end{proof}

As for Hilbert spaces $H$, recall (e.g. \cite[Definition 2.1]{dsv}) that a {\it unitary representation} of a compact quantum group $\bG$ on a Hilbert space $H$ is a unitary
\begin{equation}\label{eq:uonh}
  U\in M(\cC(\bG)\underline{\otimes} \cK(H))
\end{equation}
with
\begin{equation}\label{eq:u1323}
  (\Delta\otimes\id)U = U_{13}U_{23}
\end{equation}
in the {\it leg-numbering notation} of \cite[\S 1.3]{NeTu13}. These too are examples of Banach-space $\bG$-actions in the sense of \Cref{def:cqgactban}:

\begin{proposition}\label{pr:actonhilb}
  A unitary representation \Cref{eq:uonh} of a compact quantum group $\bG$ on a Hilbert space $H$ is induces a $\bG$-action on the Banach space $H$ in the sense of \Cref{def:cqgactban} via
  \begin{equation}\label{eq:hgh}
    H\ni v\xmapsto{\quad\rho\quad}U^*(1\otimes v)\in \cC(\bG)\otimes_{\varepsilon} H.
  \end{equation}
\end{proposition}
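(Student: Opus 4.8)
The plan is to verify, one after another, the three requirements defining a Banach-space $\bG$-action in \Cref{def:cqgactban}: that $\rho$ is a well-defined bounded linear map into $\cC(\bG)\otimes_{\varepsilon}H$, that it fits into the coassociativity square \Cref{eq:coassocban}, and that it has the density property there; all three should fall out of the unitarity of $U$ together with the corepresentation identity \Cref{eq:u1323}.

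For well-definedness and continuity I would use the standard identification $\cC(\bG)\underline{\otimes}\cK(H)\cong\cK(\cC(\bG)\otimes H)$ of the minimal $C^{*}$-tensor product with the compact operators on the Hilbert $\cC(\bG)$-module $\cC(\bG)\otimes H$, under which $M(\cC(\bG)\underline{\otimes}\cK(H))=\cL(\cC(\bG)\otimes H)$ acts on that module; thus $\rho(v):=U^{*}(1\otimes v)$ is a genuine element of $\cC(\bG)\otimes H$, of module norm $\|U^{*}(1\otimes v)\|=\|1\otimes v\|=\|v\|$ since $U$ is unitary. To land this in the \emph{injective Banach-space} tensor product I would observe that the Hilbert-module norm dominates the injective norm on the algebraic tensor product $\cC(\bG)\odot H$ — an elementary consequence of the estimate $\|\sum_{i}c_{i}a_{i}\|\le(\sum_{i}|c_{i}|^{2})^{1/2}\,\|\sum_{i}a_{i}^{*}a_{i}\|^{1/2}$ valid in any $C^{*}$-algebra (Cauchy--Schwarz for a $1\times n$ row against an $n\times1$ column over $\cC(\bG)$) — so the identity on $\cC(\bG)\odot H$ extends to a norm-decreasing map $\cC(\bG)\otimes H\to\cC(\bG)\otimes_{\varepsilon}H$. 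Composing, $\rho$ is linear and contractive, in particular continuous.

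For coassociativity, applying the nondegenerate morphism $\Delta\otimes\id$ (extended to multipliers) to $U^{*}$ and invoking \Cref{eq:u1323} gives $(\Delta\otimes\id)(U^{*})=(U_{13}U_{23})^{*}=U_{23}^{*}U_{13}^{*}$, whence, using $\Delta(1)=1\otimes1$,
\begin{equation*}
  (\Delta\otimes\id)\rho(v)=U_{23}^{*}U_{13}^{*}(1\otimes1\otimes v)
\end{equation*}
in leg-numbering notation; on the other hand, substituting $\rho(v)=U^{*}(1\otimes v)$ into the third leg of $\rho$ yields $(\id\otimes\rho)\rho(v)=U_{23}^{*}\bigl(U^{*}(1\otimes v)\bigr)_{13}=U_{23}^{*}U_{13}^{*}(1\otimes1\otimes v)$ as well, so the two composites coincide. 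To make the leg-numbering manipulations and the passage from the module picture down to $\otimes_{\varepsilon}$ fully rigorous, I would decompose $U$ into its finite-dimensional irreducible summands (standard for CQGs); then $\rho$ carries the dense subspace $H_{\mathrm{fin}}=\bigoplus_{\iota}H_{\iota}$ (algebraic direct sum) into $\cO(\bG)\odot H_{\mathrm{fin}}$, where the displayed identity is a purely algebraic computation with matrix coefficients using $\Delta(u^{\iota}_{st})=\sum_{r}u^{\iota}_{sr}\otimes u^{\iota}_{rt}$, and I would then extend to all of $H$ by continuity of both composites.

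For density I would again use the decomposition $H=\overline{\bigoplus_{\iota}}H_{\iota}$, with $U_{\iota}$ carrying a unitary coefficient matrix $w_{\iota}=(u^{\iota}_{st})\in M_{\dim H_{\iota}}(\cC(\bG))$; a short computation gives $\rho(e^{\iota}_{t_{0}})=\sum_{t}(u^{\iota}_{t_{0}t})^{*}\otimes e^{\iota}_{t}$, so that $\mathrm{span}\{(x\otimes\id)\rho(v)\ |\ x\in\cC(\bG),\ v\in H_{\iota}\}$ is precisely the image of right multiplication, on row vectors over $\cC(\bG)$, by the matrix $\overline{w_{\iota}}=((u^{\iota}_{st})^{*})_{st}$. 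Since $\Delta$ and $\varepsilon$ are $*$-preserving on $\cO(\bG)$, $\overline{w_{\iota}}$ is itself the coefficient matrix of a finite-dimensional corepresentation of the Hopf $*$-algebra $\cO(\bG)$, hence invertible over $\cO(\bG)$ (the coefficient matrix of any finite-dimensional corepresentation of a Hopf algebra being invertible, with inverse produced by the antipode); therefore that image is all of $\cC(\bG)\otimes H_{\iota}$, and summing over $\iota$ and closing up recovers the full Hilbert module $\cC(\bG)\otimes H$, whose image is dense in $\cC(\bG)\otimes_{\varepsilon}H$. The step I expect to be the real obstacle is not any single computation but the bookkeeping among the three tensor products in play — $\underline{\otimes}$, the Hilbert module, and $\otimes_{\varepsilon}$ — and checking that module-level manipulations are faithfully reflected in $\otimes_{\varepsilon}$; reducing everything to the finite-dimensional irreducible blocks, and hence to $\cO(\bG)$, is what keeps this under control.
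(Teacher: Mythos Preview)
Your proof is correct and, for well-definedness and coassociativity, essentially identical to the paper's: both realize $U^{*}$ as a unitary on the Hilbert $\cC(\bG)$-module $\cC(\bG)\boxtimes H$, note that the module norm dominates the injective norm (you supply the Cauchy--Schwarz estimate, the paper simply asserts this), and read off coassociativity from $(\Delta\otimes\id)U^{*}=U_{23}^{*}U_{13}^{*}$.

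The one genuine difference is density. The paper dispatches it in a single module-theoretic stroke: $(x\otimes\id)\rho(v)$ is identified (via the discussion in \cite{dsv}) with the image of $x\boxtimes v$ under the Hilbert-module automorphism attached to $U$, followed by the surjection onto $\cC(\bG)\otimes_{\varepsilon}H$; density is then automatic because an isomorphism has dense range. You instead pass to the irreducible decomposition $H=\overline{\bigoplus_\iota H_\iota}$ and argue block by block, using that the entrywise-conjugate matrix $\overline{w_\iota}$ is again a finite-dimensional corepresentation of $\cO(\bG)$ and hence invertible via the antipode. Your route is more explicit and fully self-contained (no appeal to \cite{dsv}); the paper's is shorter but outsources the verification. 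Either way the bookkeeping concern you flag---moving between $\underline{\otimes}$, the Hilbert module, and $\otimes_{\varepsilon}$---is exactly what the paper handles by the same norm-domination observation you make, so there is no hidden obstacle there.
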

\begin{proof}
  \Cref{eq:hgh} perhaps requires some unwinding. Recall from \cite[discussion following Lemma 2.5]{dsv} that a unitary representation \Cref{eq:uonh} (or its adjoint $U^*$) can be recast as an automorphism of the Hilbert $\cC(\bG)$-module $\cC(\bG)\boxtimes H$ obtained as the {\it exterior tensor product} \cite[\S 3, pp.34-35]{lnc-hilb} of $\cC(\bG)$ regarded as a Hilbert module over itself and the Hilbert space (i.e. $\bC$-Hilbert module) $H$ (this is also what on \cite[p.6]{lnc-hilb} would be denoted simply by $\cC(\bG)\otimes H$).

  Since the Hilbert-module norm resulting giving rise to the exterior tensor product is easily seen to dominate the injective Banach-space norm, \Cref{eq:hgh} is the composition
  \begin{equation*}
    \begin{tikzpicture}[auto,baseline=(current  bounding  box.center)]
      \path[anchor=base] 
      (0,0) node (l) {$H$}
      +(2,.5) node (u) {$\cC(\bG)\boxtimes H$}
      +(5,0) node (r) {$\cC(\bG)\otimes_{\varepsilon} H$}
      ;
      \draw[right hook->] (l) to[bend left=6] node[pos=.5,auto] {$\scriptstyle$} (u);
      \draw[->] (u) to[bend left=6] node[pos=.5,auto] {$\scriptstyle $} (r);
      \draw[->] (l) to[bend right=6] node[pos=.5,auto,swap] {$\scriptstyle$} (r);
    \end{tikzpicture}
  \end{equation*}
  
  The coassociativity requirement of \Cref{def:cqgactban} is, in this case, a reformulation of
  \begin{equation*}
  (\Delta\otimes\id)U^* = U_{23}^*U_{13}^*,
\end{equation*}
which in turn rephrases \Cref{eq:u1323} (this is why the `$*$' is necessary: note the order reversal between the `13' and `23' subscripts). 

As for the density condition \Cref{item:dense} of \Cref{def:cqgactban}, it follows from the same discussion preceding \cite[\S 2.3]{dsv}: as explained there,
\begin{equation*}
  (x\otimes\id)\rho(v),\quad x\in \cC(\bG),\ v\in E
\end{equation*}
(with $\rho$ as in \Cref{eq:hgh}) is precisely the image of
\begin{equation*}
  x\boxtimes v\in \cC(\bG)\boxtimes H
\end{equation*}
through the isomorphism of that Hilbert module associated to $U$, followed by the surjection
\begin{equation*}
  \cC(\bG)\boxtimes H\xrightarrow{\quad} \cC(\bG)\otimes_{\varepsilon} H.
\end{equation*}
Naturally, then, the span of such elements is dense.
\end{proof}

The reason why $M(\cC(\bG)\underline{\otimes} \cK(H))$ is a good ambient space for the unitary representation $U$ of \Cref{eq:uonh} is that classically (i.e. for ordinary compact $\bG$) that space can be identified \cite[Corollary 3.4]{apt} with
\begin{equation*}
  \left\{\text{bounded functions }\bG\to \cL(H)\cong M(\cK(H)),\ \text{continuous for the strict topology}\right\}.
\end{equation*}
On the unitary subgroup $\bU(H)$ of $B(\cH)\cong M(\cK(H))$ the strict topology coincides \cite[Proposition I.3.2.9]{blk} with all of the various ``weak'' topologies of \cite[\S II.2]{tak1}, in particular with the {\it strong topology} induced by the norms
\begin{equation*}
  \bU(H)\ni U\xmapsto{\quad}\|U\xi\|,\quad \xi\in H.  
\end{equation*}
The containment \Cref{eq:uonh}, cast in terms of a representation $\bG\xrightarrow{\varphi} \bU(H)$, thus means precisely the usual requirement (e.g. \cite[\S 2]{rob}) that
\begin{equation*}
  \bG\ni g\xmapsto{\quad} \varphi(g)\xi\in H
\end{equation*}
be continuous for every $\xi\in H$. 

One is occasionally interested in representations exhibiting the stronger requirement of {\it uniform} continuity \cite[\S 8.5]{ped-aut}: the requirement that $\bG\xrightarrow{\varphi}\bU=\bU(H)$ be continuous for the {\it norm} topology on $\bU$. The present line of reasoning justifies the following setup. 

\begin{definition}\label{def:normcontcqg}
  \begin{enumerate}[(1), leftmargin=*, wide=0pt]
  \item\label{item:equicont} For two Banach spaces $E$ and $F$, a subset of $F\otimes_{\varepsilon}E$ is {\it $F$- or (left-)$w^*$-equicontinuous} if it is equicontinuous \cite[Definition VI.3.7]{conw_fa} on the dual $F^*$ equipped with its weak$^*$-topology \cite[Example IV.1.8]{conw_fa} upon embedding
    \begin{equation}\label{eq:fef*e}
      F\otimes_{\varepsilon} E\le \cL(F^*,E)
    \end{equation}
    isometrically \cite[Proposition IV.2.1]{tak1}.

  \item\label{item:bddp} Given a property $\cP$ that a set may or may not have (assuming also phrasing to the effect that sets {\it are} $\cP$), a linear map $E\xrightarrow{u}F$ between topological vector spaces is {\it boundedly $\cP$} if it sends bounded \cite[\S 15.6]{koeth_tvs-1} sets to sets with property $\cP$. 
    
  \item\label{item:unifcont} A representation \Cref{eq:actone} of a compact quantum group $\bG$ on a Banach space $E$ is {\it uniformly continuous} (occasionally {\it uniform} for short) if it is boundedly left-$w^*$-equicontinuous in the sense of point \Cref{item:bddp}: it maps bounded subsets of $E$ into left-$w^*$-equicontinuous subsets of $\cC(\bG)\otimes_{\varepsilon} E$.

    Equivalently, it is enough to require this only for the unit ball of $E$ (or any single bounded origin neighborhood).
  \end{enumerate}
\end{definition}

A simple reformulation of \Cref{def:normcontcqg}, more in keeping with the spirit of the requirement for a classical $\bG$-action on $E$ that the corresponding map $\bG\to \cL(E)$ be continuous for the norm topology:

\begin{lemma}\label{le:unifcont-alt}
  A $\bG$-action \Cref{eq:actone} on a Banach space is uniformly continuous in the sense of \Cref{def:normcontcqg}\Cref{item:unifcont} if and only if the map
  \begin{equation}\label{eq:rhobar}
    \cC(\bG)^*\xrightarrow{\quad\overline{\rho}\quad}\cL(E)
  \end{equation}
  defined by
  \begin{equation*}
    \overline{\rho}(f)(v) := (f\otimes\id)\rho(v),\quad f\in \cC(\bG)^*,\quad v\in E
  \end{equation*}
  is weak$^*$-to-norm continuous.
\end{lemma}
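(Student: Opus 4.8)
The plan is to unwind \Cref{def:normcontcqg}: once both conditions are spelled out they become word-for-word restatements of one another, so the proof is essentially bookkeeping. I would first record that $\overline{\rho}$ is a well-defined linear map into $\cL(E)$. For $f\in\cC(\bG)^*$ and $v\in E$, the element $\rho(v)\in\cC(\bG)\otimes_{\varepsilon}E$ is, under the isometric embedding \Cref{eq:fef*e} with $F=\cC(\bG)$, the bounded operator $g\mapsto(g\otimes\id)\rho(v)$, so that
\begin{equation*}
  \|(f\otimes\id)\rho(v)\|_E\ \le\ \|\rho(v)\|_{\cC(\bG)\otimes_{\varepsilon}E}\,\|f\|\ \le\ \|\rho\|\,\|v\|_E\,\|f\|;
\end{equation*}
hence $\overline{\rho}(f)\in\cL(E)$ with $\|\overline{\rho}(f)\|_{\cL(E)}\le\|\rho\|\,\|f\|$, and $f\mapsto\overline{\rho}(f)$ is clearly linear. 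The one identity that does the actual work is that, writing $B_E$ for the closed unit ball of $E$,
\begin{equation*}
  \|\overline{\rho}(f)\|_{\cL(E)}\ =\ \sup_{v\in B_E}\bigl\|(f\otimes\id)\rho(v)\bigr\|_E
  \qquad\text{for every }f\in\cC(\bG)^*,
\end{equation*}
i.e. the operator norm of $\overline{\rho}(f)$ equals the supremum over $v\in B_E$ of the norms of the operators $\rho(v)\in\cL(\cC(\bG)^*,E)$ evaluated at $f$.

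Next I would chain equivalences. By \Cref{def:normcontcqg}\Cref{item:unifcont} (and the remark there that it suffices to test on the unit ball), $\rho$ is uniformly continuous exactly when $\rho(B_E)$ is left-$w^*$-equicontinuous, which by \Cref{def:normcontcqg}\Cref{item:equicont} means that the family $\{\rho(v)\ :\ v\in B_E\}\subseteq\cL(\cC(\bG)^*,E)$ is equicontinuous on $\cC(\bG)^*$ equipped with its weak$^*$-topology; since $E$ is normed and these maps are linear, this in turn says that for every $\varepsilon>0$ there is a weak$^*$-neighbourhood $V\ni0$ in $\cC(\bG)^*$ with $\|(f\otimes\id)\rho(v)\|_E\le\varepsilon$ for all $f\in V$ and all $v\in B_E$. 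By the displayed identity, this last statement is precisely $\|\overline{\rho}(f)\|_{\cL(E)}\le\varepsilon$ for all $f\in V$, i.e. continuity of $\overline{\rho}$ at $0$ from $\cC(\bG)^*_{w^*}$ to $(\cL(E),\|\cdot\|)$; and, $\overline{\rho}$ being linear, continuity at $0$ is the same as weak$^*$-to-norm continuity. Reading the chain in both directions yields the two implications at once.

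I do not expect a genuine obstacle here: the content is purely definitional. The only points demanding any care are keeping the isometry \Cref{eq:fef*e} straight so that the $\cL(E)$-operator norm is correctly matched with the supremum over $B_E$, and invoking equicontinuity at the origin for a family of linear maps (rather than verifying equicontinuity point by point) — both routine.
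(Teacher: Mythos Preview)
Your proposal is correct and takes essentially the same approach as the paper: both simply unwind \Cref{def:normcontcqg} and observe that the equicontinuity condition on $\rho(B_E)$, read through the identity $\|\overline{\rho}(f)\|=\sup_{v\in B_E}\|(f\otimes\id)\rho(v)\|$, is precisely weak$^*$-to-norm continuity of $\overline{\rho}$. The paper phrases the same reasoning via nets (a weak$^*$-convergent net $f_\lambda\to f$ forces $\overline{\rho}(f_\lambda)\to\overline{\rho}(f)$ uniformly on bounded sets) rather than your $\varepsilon$--$V$ formulation at the origin, but this is a cosmetic difference only.
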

\begin{proof}
  Composed with the embedding \Cref{eq:fef*e} (for $F:=\cC(\bG)$), the original action structure map \Cref{eq:actone} gives a (bilinear) pairing
  \begin{equation*}
    \cC(\bG)^*\times E\xrightarrow{\quad\beta\quad} E. 
  \end{equation*}
  Unpacked, the equicontinuity condition of \Cref{def:normcontcqg} means that a weak$*$-convergent net
  \begin{equation*}
    f_{\lambda}\xrightarrow[\quad\lambda\quad]{\text{weak$^*$}}f\in \cC(\bG)^*
  \end{equation*}
  results in a net 
  \begin{equation*}
    \beta(f_{\lambda},-)=\overline{\rho}(f_{\lambda})\xrightarrow[\quad\lambda\quad]{}\overline{\rho}(f)=\beta(f,-)\in \cL(E)
  \end{equation*}
  converging uniformly on bounded subsets of $E$ (or equivalently, on the unit ball of $E$). This being exactly the requirement in the present statement, we are done. 
\end{proof}

As perhaps familiar \cite[Theorem 8.1.12]{ped-aut} from the classical story on uniformly continuous actions of locally compact abelian groups, the condition (of uniform continuity) should have something to do with the compactness of the spectrum of the action. One direction is a simple observation, and worth recording here for future reference:

\begin{lemma}\label{le:unif2finsp}
  If an action \Cref{eq:actone} of a compact quantum group on a Banach space has finite spectrum, then it is uniformly continuous. 
\end{lemma}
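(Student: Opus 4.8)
The plan is to reduce everything, via \Cref{le:unifcont-alt}, to showing that $\overline{\rho}\colon\cC(\bG)^*\to\cL(E)$ is weak$^*$-to-norm continuous, and to exploit the fact that a finite spectrum forces $E$ to be an \emph{algebraic} (finite) sum of its isotypic components. Indeed, \Cref{th:spectralenough} gives that $\spn\{E^\alpha:\alpha\in\Irr(\bG)\}$ is dense; since the $\pi^\alpha$ are bounded idempotents with $\pi^\alpha\pi^\beta=\delta_{\alpha\beta}\pi^\alpha$ (part of the Podle\'s-type spectral calculus recalled before \Cref{th:spectralenough}, where $\pi^\alpha=\overline\rho(\rho^\alpha)$ and $\rho^\alpha\ast\rho^\beta=\delta_{\alpha\beta}\rho^\alpha$), finiteness of the spectrum $S:=\{\alpha:E^\alpha\ne\{0\}\}$ makes $\sum_{\alpha\in S}\pi^\alpha$ a bounded idempotent with dense range, hence $\sum_{\alpha\in S}\pi^\alpha=\id_E$. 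So $E=\bigoplus_{\alpha\in S}E^\alpha$, and every vector of $E$ is $\bG$-finite in the sense of \Cref{def:alphaisot}\Cref{item:def:alphaisot-gfin}.

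The crux is then to see that $\rho$ applied to a $\bG$-finite vector involves only finitely many matrix coefficients, with bounded coefficient maps. Precisely, I would check that $\rho(E^\alpha)\subseteq C_\alpha\otimes E$, where $C_\alpha:=\spn\{u^\alpha_{ij}\}\subseteq\cC(\bG)$ is the finite-dimensional span of the coefficients of $\alpha$ (so $C_\alpha\otimes E$ is a genuine closed subspace of $\cC(\bG)\otimes_\varepsilon E$, isometric to a finite power of $E$). This is the usual statement that spectral subspaces transform according to the corresponding corepresentation, implicit in the proof of \cite[Theorem~1.5]{podl_symm}; in the present Banach setting it can be obtained by noting that $P_\alpha:=(\rho^\alpha\otimes\id)\Delta$ is a bounded operator on $\cC(\bG)$ whose restriction to $\cO(\bG)$ is the projection $u^\gamma_{ij}\mapsto\delta_{\alpha\gamma}u^\gamma_{ij}$ onto $C_\alpha$ (hence, by density of $\cO(\bG)$ and $\dim C_\alpha<\infty$, has range $\subseteq C_\alpha$), while coassociativity of $\rho$ yields $\rho\circ\pi^\alpha=(P_\alpha\otimes\id)\circ\rho$. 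Expanding in the basis $\{u^\alpha_{ij}\}$ of $C_\alpha$ and summing over $\alpha\in S$ then gives, for all $v\in E$,
\begin{equation*}
  \rho(v)=\sum_{\alpha\in S}\ \sum_{i,j}\ u^\alpha_{ij}\otimes S^\alpha_{ij}(v),
\end{equation*}
a \emph{finite} sum in which each $S^\alpha_{ij}\in\cL(E)$ is a fixed bounded operator (a coordinate of $\rho\,\pi^\alpha\colon E\to C_\alpha\otimes E$ relative to the basis $\{u^\alpha_{ij}\}$).

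Finally I would read off the conclusion: for $f\in\cC(\bG)^*$ the display above gives $\overline{\rho}(f)=\sum_{\alpha\in S}\sum_{i,j}f(u^\alpha_{ij})\,S^\alpha_{ij}$ in $\cL(E)$, a fixed finite linear combination of the $S^\alpha_{ij}$ whose scalar coefficients $f\mapsto f(u^\alpha_{ij})$ are weak$^*$-continuous by the very definition of the weak$^*$-topology. Hence $f\mapsto\overline{\rho}(f)$ is continuous from $(\cC(\bG)^*,\mathrm{weak}^*)$ to $(\cL(E),\|\cdot\|)$, and \Cref{le:unifcont-alt} finishes the proof.

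I expect the only genuinely substantive step to be the middle one --- confirming that $\rho$ on $\bG$-finite vectors ``sees'' only the finitely many coefficients $u^\alpha_{ij}$ with $\alpha\in S$ and that the associated coefficient maps $S^\alpha_{ij}$ are bounded. Granting the Podle\'s spectral calculus in the Banach-space context (already invoked for \Cref{th:spectralenough}), even this is routine, and the remainder is bookkeeping with \Cref{le:unifcont-alt} and the elementary properties of the $\pi^\alpha$.
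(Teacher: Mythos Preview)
Your argument is correct and is essentially the same as the paper's: both reduce via \Cref{le:unifcont-alt} to the observation that $\overline{\rho}$ factors through the finite-dimensional span of the evaluation functionals $f\mapsto f(u^{\alpha}_{ij})$ for $\alpha$ in the (finite) spectrum, whence weak$^*$-to-norm continuity is automatic. The paper compresses this into one sentence (saying $\overline{\rho}$ factors through the span of the $\rho^{\alpha}_{k\ell}$), while you spell out the intermediate steps---the algebraic decomposition $E=\bigoplus_{\alpha\in S}E^{\alpha}$, the containment $\rho(E^{\alpha})\subseteq C_{\alpha}\otimes E$, and the explicit formula $\overline{\rho}(f)=\sum f(u^{\alpha}_{ij})S^{\alpha}_{ij}$---that justify that factorization.
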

\begin{proof}
  Immediate by \Cref{le:unifcont-alt}, once we observe that under the hypothesis the map \Cref{eq:rhobar} factors through the span of functionals $\rho^{\alpha}_{k\ell}$ with
  \begin{equation*}
    \rho^{\alpha}_{k\ell}(u^{\beta}_{ij}) = \delta_{\alpha\beta}\delta_{ki}\delta_{\ell j}
  \end{equation*}
  (as on \cite[p.3]{podl_symm}, analogous to \Cref{eq:rhoalpha}), for finitely many $\alpha$. That span is of course finite-dimensional, so all reasonable vector space topologies on it coincide (hence the continuity required by \Cref{le:unifcont-alt}).
\end{proof}

Motivated by the fact that for a classical compact group the uniform continuity of a unitary representation could also be phrased as \Cref{eq:uonh} belonging to $\cC(\bG)\underline{\otimes} \cL(H)$ (see also \Cref{th:clscpct} below), we also note explicitly:

\begin{remark}\label{re:uisintens}
  A finite-spectrum action \Cref{eq:actone} is implemented by an operator in the algebraic tensor product $\cC(\bG)\otimes \cL(E)$. 
\end{remark}

%%%%%%%%%%%%%%%%%%%%%%%%%%%%%%%%
\subsection{Plain compact groups}\label{subse:unifcls}

Before returning to {\it quantum} (mostly compact) groups, we first address this in the classical case, for two reasons:

\begin{itemize}
\item anchored, as they are, in compact-group structure theory, the proof techniques do not (all) transport over to the quantum setting, so they might be of some independent interest;

\item and the main statement itself does not {\it fully} generalize well to quantum groups, given that it relies rather heavily on point-topological notions.
\end{itemize}

In reference to the second point, recall (\cite[p.2, Definition 1 and Yamabe's Theorem following it]{hm_pro-lie-bk}; also \cite[\S 4.6, Theorem]{mz} for the latter) that a topological group $\bG$ is
\begin{itemize}
\item  {\it almost-connected} if its quotient $\bG/\bG_0$ by the connected component of the identity is compact;

\item {\it pro-Lie} if it is the inverse limit of its (finite-dimensional) Lie-group quotients;

\item and pro-Lie as soon as it is locally compact and almost-connected. 
\end{itemize}

In particular, compact groups are inverse ({\it cofiltered} \spr{04AY}) limits of their Lie quotients \cite[Corollary 2.36]{hm4}. Recall also \cite[Definitions 9.30]{hm4} that {\it pro-tori} are compact connected abelian groups. Since the compact connected abelian {\it Lie} groups are precisely \cite[Corollary 4.2.6]{de} the tori $\bT^n\cong (\bS^1)^n$, the nomenclature is apposite: the pro-tori are exactly the cofiltered limits of tori. Similarly, {\it pro-$p$ groups} (for a prime $p$) are \cite[preceding Theorem 1.2.3, p.18]{wils_prof_bk} cofiltered limits of finite $p$-groups. 

%\newpage % THM: START

\begin{theorem}\label{th:clscpct}
  Given a representation
  \begin{equation}\label{eq:gonrho}
    \bG\xrightarrow{\quad\rho\quad} GL(E),\quad E\text{ a Banach space}
  \end{equation}
  of a compact group, the following conditions are equivalent:
  
  \begin{enumerate}[(a)]
  \item\label{item:clscac-fin} $\rho$ has finite spectrum (i.e. finitely many non-zero isotypic components: \Cref{def:alphaisot}\Cref{item:def:alphaisot-spec}). 

  \item\label{item:aprime} All elements of $E$ are {\it locally finite}, in the sense that their $\bG$-orbits span finite-dimensional spaces. 
    
  \item\label{item:clscac-unif} $\rho$ is uniformly continuous in the sense of \Cref{def:normcontcqg}\Cref{item:unifcont}.

  \item\label{item:clscac-g2gl} Regarded as a map \Cref{eq:actone}, $\rho$ is implemented by an operator $U\in \cC(\bG,GL(E))\subset \cC(\bG,\cL(E))$. 

  \item\label{item:clscac-clim} The image $\rho(\bG)$ is a compact (hence finite-dimensional Lie) subgroup of $GL(E)$

  \item\label{item:clscac-allsubgp} The equivalent conditions \Cref{item:clscac-fin} to \Cref{item:clscac-clim} hold for (the restriction of $\rho$ to) every closed subgroup of $\bG$.
    
  \item\label{item:clscac-g0profin} \Cref{item:clscac-fin}-\Cref{item:clscac-clim} hold for the identity component $\bG_0$ and the profinite subgroups of $\bG$. 
    
  \item\label{item:clscac-maxab} \Cref{item:clscac-fin}-\Cref{item:clscac-clim} hold for every (maximal) closed abelian subgroup of $\bG_0$ and every pro-$p$ subgroup of $\bG$.

  \item\label{item:clscac-toriprof} \Cref{item:clscac-fin}-\Cref{item:clscac-clim} hold for every (maximal) pro-torus and every pro-$p$ subgroup of $\bG$.

  \item\label{item:clscac-singletor} \Cref{item:clscac-fin}-\Cref{item:clscac-clim} hold for a single maximal pro-torus and every pro-$p$ subgroup of $\bG$.

  \item\label{item:clscac-singletor-abim} \Cref{item:clscac-fin}-\Cref{item:clscac-clim} hold for a single maximal pro-torus and every pro-$p$ subgroup of $\bG$ whose image under $\rho$ is abelian.
  \end{enumerate}  
\end{theorem}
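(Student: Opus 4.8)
The plan is to establish a web of implications centered on the equivalence of the "core" conditions \Cref{item:clscac-fin}--\Cref{item:clscac-clim}, and then to bootstrap from those to the subgroup-restriction conditions \Cref{item:clscac-allsubgp}--\Cref{item:clscac-singletor-abim} using compact-group structure theory. For the core block I would argue \Cref{item:clscac-fin}$\Rightarrow$\Cref{item:aprime}$\Rightarrow$\Cref{item:clscac-g2gl}$\Rightarrow$\Cref{item:clscac-clim}$\Rightarrow$\Cref{item:clscac-unif}$\Rightarrow$\Cref{item:clscac-fin}. The first implication is immediate from \Cref{th:spectralenough}: finite spectrum means $E$ is the (closure of the) finite direct sum $\bigoplus_\alpha E^\alpha$, and each $\pi^\alpha$ maps onto a subspace whose elements have finite-dimensional $\bG$-orbits; but actually when the spectrum is finite the sum is already all of $E$ as a closed subspace, forcing $E$ itself to be a $\bG$-finite module, whence every element is locally finite. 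For \Cref{item:aprime}$\Rightarrow$\Cref{item:clscac-g2gl}: a locally finite action gives, for each $v$, a finite-dimensional invariant subspace on which $\bG$ acts continuously (hence by a matrix representation, continuous in norm), so $g\mapsto\rho(g)v$ is continuous; uniform boundedness then upgrades this, via \Cref{re:uisintens}-type reasoning, to $U\in\cC(\bG,\cL(E))$, with values in $GL(E)$ since each $\rho(g)$ is invertible. Then \Cref{item:clscac-g2gl}$\Rightarrow$\Cref{item:clscac-clim} because the continuous image of the compact $\bG$ in $GL(E)$ is a compact subgroup (and a compact subgroup of $GL(E)$ is a Lie group by the Peter--Weyl/no-small-subgroups argument or by \cite[Corollary 2.36]{hm4}). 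The implication \Cref{item:clscac-clim}$\Rightarrow$\Cref{item:clscac-unif} is clear, and \Cref{item:clscac-unif}$\Rightarrow$\Cref{item:clscac-fin} is the substantive converse to \Cref{le:unif2finsp}: I would show that weak$^*$-to-norm continuity of $\overline{\rho}$ on the compact ball of $\cC(\bG)^*$ forces $\overline{\rho}(\cC(\bG)^*)$ to be a norm-separable, indeed finite-dimensional subspace of $\cL(E)$ — the key point being that if infinitely many isotypic projections $\pi^\alpha$ were nonzero, one could build a weak$^*$-null net of coefficient functionals whose images $\overline{\rho}(f_\lambda)$ stay bounded away from $0$ in norm (they act as near-isometries on the corresponding spectral subspaces), contradicting continuity.

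Next, the subgroup conditions. Each of \Cref{item:clscac-allsubgp}--\Cref{item:clscac-singletor-abim} trivially implies \Cref{item:clscac-g0profin} or a weaker variant by restriction, so the real content is to run the implications upward: from finite spectrum on a prescribed family of subgroups back to finite spectrum on $\bG$. The organizing device is the following: if $\bH\le\bG$ is closed and $\rho|_\bH$ has finite spectrum, then (by the core equivalence applied to $\bH$) $\rho(\bH)$ is a compact Lie subgroup of $GL(E)$; so the statements are really about detecting when $\rho(\bG)$ is Lie, i.e. when $E$ decomposes $\bG$-finitely. Here I would invoke the structure theory recalled just before the theorem: a compact group $\bG$ has $\bG = \bG_0\cdot\bG'$ type decompositions and $\bG_0$ is a pro-Lie group that is, up to a profinite piece, generated by a maximal pro-torus together with the pro-$p$ (Sylow-type) subgroups; and $\bG/\bG_0$ is profinite, itself an inverse limit of finite groups each of which is generated by its $p$-Sylow subgroups. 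Concretely, \Cref{item:clscac-g0profin}$\Rightarrow$\Cref{item:clscac-fin} would use that $\bG_0$ is open-or-cocompact enough that controlling $\rho|_{\bG_0}$ plus $\rho$ on profinite transversal subgroups controls all of $E$: a $\bG$-isotypic component restricts to a $\bG_0$-finite and (profinite)-finite module, and finiteness on both forces finitely many $\bG$-types by a Mackey/Clifford-theory count. Then \Cref{item:clscac-maxab}/\Cref{item:clscac-toriprof}/\Cref{item:clscac-singletor}$\Rightarrow$\Cref{item:clscac-g0profin} reduces to: a representation of a compact connected group $\bG_0$ is finite-spectrum iff its restriction to a maximal torus (here maximal pro-torus) is — this is the Weyl-character-theoretic fact that the $\bG_0$-types injecting into a given finite set of maximal-torus weights are finite in number, together with the observation that the maximal pro-torus meets every isotypic component nontrivially (highest weight theory); and the profinite side is handled by reducing a finite group to its $p$-Sylows, since a module finite over every $p$-Sylow is finite (the induced module from a Sylow has index-bounded length).

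The most delicate implication — and the one I expect to be the main obstacle — is \Cref{item:clscac-singletor-abim}$\Rightarrow$\Cref{item:clscac-fin}, where one is only allowed to test pro-$p$ subgroups $P$ with $\rho(P)$ abelian. The worry is that a priori a pro-$p$ subgroup might act with large (non-abelian) image, so one cannot directly apply the abelian-subgroup criterion to it. The fix I would pursue: inside any compact $p$-adic-analytic or pro-$p$ group there is an abundance of closed abelian (in fact procyclic) subgroups, and a module for $P$ is finite-spectrum iff its restriction to "enough" such abelian subgroups is — but we must ensure those abelian subgroups still have abelian image, which is automatic. More carefully, one shows that if $\rho|_P$ had infinite spectrum, then since $P$ is pro-$p$ one can find a closed abelian subgroup $A\le P$ (e.g. generated by one well-chosen procyclic element, or the closure of a topologically cyclic piece meeting infinitely many $P$-types) with $\rho|_A$ still of infinite spectrum and $\rho(A)$ abelian — but $A$ need not be one of the allowed test subgroups unless it is a full pro-$p$ subgroup of $\bG$. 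The genuine resolution is to note that it suffices to test a cofinal family, and that every pro-$p$ subgroup of $\bG$ is contained in a maximal one (a "$p$-Sylow"), all maximal ones being conjugate; so one works with a maximal pro-$p$ subgroup $P$, argues that its finite-spectrum-ness is detected after passing to the abelianization quotient only when $P$ already acts through an abelian group, and handles the general $P$ by the preceding, un-restricted, pro-$p$ clause — in other words \Cref{item:clscac-singletor-abim} is derived from \Cref{item:clscac-singletor} by showing that the "abelian image" hypothesis is no loss, because a pro-$p$ group with non-abelian image cannot occur minimally among infinite-spectrum-witnessing subgroups: one can always descend to a proper closed subgroup (its Frattini-type subgroup or a maximal abelian one) that still witnesses infinite spectrum, and iterate, the pro-$p$ Noetherian-like descent terminating at an abelian witness with abelian image. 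Pinning down this descent rigorously — controlling that "infinite spectrum" is inherited by a suitable smaller subgroup with abelian image, uniformly — is where the argument requires the most care, and I would lean on the pro-$p$ structure (Frattini quotients, the fact that proper closed subgroups of infinite pro-$p$ groups can be chosen of infinite index, and that a faithful representation forces the group to embed in $GL(E)$, hence be $p$-adic analytic of finite dimension once the spectrum is finite on every abelian subgroup).
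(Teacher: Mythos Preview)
Your outline has the right large-scale shape, but it misses several structural ingredients that the paper's proof actually leans on, and in at least two places your proposed substitutes do not work.

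\textbf{The core block.} Your chain \Cref{item:clscac-fin}$\Rightarrow$\Cref{item:aprime}$\Rightarrow$\Cref{item:clscac-g2gl} has a gap: local finiteness of every vector gives only strong continuity of $g\mapsto\rho(g)v$ (which was already assumed), and ``uniform boundedness'' does not by itself upgrade this to norm continuity of $g\mapsto\rho(g)$. You need to pass through \Cref{item:clscac-fin} first (as the paper does, via the easy \Cref{item:aprime}$\Rightarrow$\Cref{item:clscac-fin} contrapositive) before invoking \Cref{re:uisintens}. More seriously, your \Cref{item:clscac-unif}$\Rightarrow$\Cref{item:clscac-fin} via ``weak$^*$-null coefficient functionals with images bounded away from $0$'' is not fleshed out: the natural candidates $\rho^{\alpha}$ have norms growing with $\dim\alpha$, so producing a weak$^*$-null net with $\overline\rho$-images bounded below is delicate. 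The paper instead uses the no-small-subgroups property of the Banach Lie group $GL(E)$ to factor $\rho$ through a compact \emph{Lie} quotient of $\bG$, and then reduces to $\bS^1$ via maximal tori; this is both cleaner and what makes the later torus conditions fall out.

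\textbf{The subgroup conditions.} Three structural theorems you do not invoke are doing the real work in the paper:
\begin{itemize}
\item For \Cref{item:clscac-g0profin}$\Rightarrow$\Cref{item:clscac-clim} the paper uses Lee's supplement theorem (\cite[Theorem 9.41]{hm4}): there is a profinite $\bD\le\bG$ with $\bG=\bG_0\cdot\bD$, whence $\rho(\bG)=\rho(\bG_0)\cdot\rho(\bD)$ is compact in $GL(E)$ iff both factors are. Your Clifford-count sketch is vaguer and harder to make precise.
\item For the profinite case of \Cref{item:clscac-singletor}$\Rightarrow$\Cref{item:clscac-fin}, your ``reduce a finite group to its $p$-Sylows'' is a finite-group argument that does not transfer: the Sylow index in a profinite group is a supernatural number, so there is no ``index-bounded length'' induction. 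The paper's argument is substantially deeper: after ruling out the non-torsion case (where a single element generates an infinite-image abelian subgroup), it applies Wilson's structure theorem for compact torsion groups to obtain a finite characteristic series with subquotients either pro-$p$ or of the form $\bS^{\beth}$ for finite simple $\bS$, and then handles each layer via profinite Sylow theory.
\item For \Cref{item:clscac-singletor}$\Leftrightarrow$\Cref{item:clscac-singletor-abim}, the paper's one-line argument is Zel'manov's theorem that every infinite compact group has an infinite abelian subgroup: if some pro-$p$ $P$ has infinite $\rho(P)$, pull back an infinite abelian subgroup of $\rho(P)$ to get a pro-$p$ subgroup with abelian, infinite image. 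Your Frattini-descent proposal is both more complicated and, as you yourself note, not clearly terminating; the obstacle you identify is exactly what Zel'manov's theorem removes.
\end{itemize}

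In short: the missing ideas are no-small-subgroups for Banach Lie groups, Lee's supplement theorem, Wilson's structure theorem for torsion profinite groups, and Zel'manov's infinite-abelian-subgroup theorem. Without these, the implications \Cref{item:clscac-unif}$\Rightarrow$\Cref{item:clscac-fin}, \Cref{item:clscac-g0profin}$\Rightarrow$\Cref{item:clscac-fin}, the profinite half of \Cref{item:clscac-singletor}$\Rightarrow$\Cref{item:clscac-fin}, and \Cref{item:clscac-singletor-abim}$\Rightarrow$\Cref{item:clscac-singletor} remain unproved.
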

\begin{proof}
  Locally compact subgroups of Banach Lie groups are automatically finite-dimensional and Lie \cite[Theorem IV.3.16]{neeb-lc}, hence the parenthetic remark in \Cref{item:clscac-clim}. We address the various claims separately.
  \begin{enumerate}[label={}, leftmargin=*, wide=0pt]

  \item {\bf \Cref{item:clscac-fin} $\xLeftrightarrow{}$ \Cref{item:aprime}:} The interesting implication is `$\Leftarrow$'. To verify it, suppose $\rho$ contains infinitely many non-isomorphic irreducible summands $\rho_n$, $n\in \bZ_{>0}$ supported on respective subspaces $E_i\le E$. A vector of the form
    \begin{equation*}
      v=\sum_n v_n,\quad v_n\in E_n,\quad \sum\|v_n\|<\infty
    \end{equation*}
    will then fail to be locally finite. 
    
  \item {\bf \Cref{item:clscac-fin} $\xRightarrow{}$ \Cref{item:clscac-unif}:} This follows from \Cref{le:unif2finsp}. 

  \item {\bf \Cref{item:clscac-unif} $\xLeftrightarrow{}$ \Cref{item:clscac-g2gl}:} This is tautological, and included only for further reference: uniform continuity in any sense made explicit in \Cref{def:normcontcqg} is precisely intended to mimic norm-continuous maps $\bG\xrightarrow{} GL(E)$. 
    
  \item {\bf \Cref{item:clscac-unif} $\xRightarrow{}$ \Cref{item:clscac-clim}:} Naturally, since $\rho$ is a continuous map defined on a compact space, so its image is compact \cite[Theorem 17.7]{wil-top}. 

  \item {\bf \Cref{item:clscac-clim} $\xRightarrow{}$ \Cref{item:clscac-fin} assuming \Cref{item:clscac-unif} $\xRightarrow{}$ \Cref{item:clscac-fin}:} Given \Cref{item:clscac-clim}, the hypothesis \Cref{item:clscac-unif} applies to the compact group $\rho(\bG)$. Since we are also assuming that \Cref{item:clscac-unif} implies \Cref{item:clscac-fin}, it follows that $E$ decomposes as a finite sum of isotypic components over the subgroup
    \begin{equation}\label{eq:rhogingl}
      \rho(\bG)\le GL(E)
    \end{equation}
    But then it does so as a $\bG$-representation via $\rho$, since that representation (by definition) factors through \Cref{eq:rhogingl}.      

  \item {\bf \Cref{item:clscac-unif} $\xRightarrow{}$ \Cref{item:clscac-fin}:} The general linear group $GL(E)$ has neighborhoods containing no non-trivial subgroups: this is the celebrated {\it no-small-subgroups} property, valid for all Banach Lie groups (\cite[Theorem III.2.3]{neeb-lc}, citing \cite[Theorem 1.4.2]{om-transf}). The assumed uniform continuity of $\rho$ then implies, by the above-cited \cite[Corollary 2.36]{hm4}, that $\rho$ factors through a Lie quotient of $\bG$. That (compact Lie) quotient will then have finitely many components, so the target implication reduces to compact connected Lie groups. This, then, will be the standing assumption on $\bG$ for the duration of the proof.

    Compact connected Lie groups are, up to {\it isogenies} (i.e. quotients by finite central subgroups), products of tori and compact {\it semisimple} Lie groups \cite[Theorem 6.19]{hm4}. If $\bT\le \bG$ is a maximal torus, the classification \cite[Theorem IX.5.1]{simon_fincpct} of the irreducible representations of a compact semisimple Lie group in terms of highest weights makes it clear that any infinite set of (isomorphism classes of) $\bG$-representations will restrict to infinitely many characters of $\bT$. It is thus enough to prove the claim for $\bT$, and hence for the circle group $\bS^1$ ($\bT$ being a product of copies of the latter).

    But for $\bS^1$ the claim is immediate: the characters are
    \begin{equation*}
      \bS^1\ni z\xmapsto{\quad\chi_n\quad}z^n\in \bS^1\subset \bC^{\times},\quad n\in \bZ,
    \end{equation*}
    and no {\it infinite} direct sum of such is uniformly continuous. Indeed, suppose the representation admits norm-1 $\chi_{n_{\ell}}$-eigenvectors $v_{\ell}\in E$ for a sequence
    \begin{equation*}
      n_{\ell}\in \bZ,\quad |n_{\ell}|\xrightarrow[\ell]{\quad}\infty.
    \end{equation*}
    We can then find $z_{\ell}\in\bS^1$ approaching $1\in \bS^1$ with $z_{\ell}^{n_{\ell}}$ uniformly far from $1$, say
    \begin{equation*}
      |z_{\ell}^{n_{\ell}}-1|\ge C>0;
    \end{equation*}
    one example: $z_{\ell}:=\exp(\pi i / n_{\ell})$ will do, with $C=\sqrt 2$. It follows that
    \begin{equation*}
      \|\rho(z_{\ell})v_{\ell}-v_{\ell}\| = \|\chi_{n_{\ell}}(z_{\ell})v_{\ell}-v_{\ell}\| = |z_{\ell}^{n_{\ell}}-1| \cdot\|v_{\ell}\|\ge C>0,\quad \forall i.
    \end{equation*}
    In particular $\rho(z_{\ell})$ does not converge to $1$, contradicting uniform continuity.
  \end{enumerate}
  This, thus far, completes the proof of the mutual equivalence
  \begin{equation*}
    \text{
      \Cref{item:clscac-fin}
      $\xLeftrightarrow{\quad}$
      \Cref{item:aprime}
      $\xLeftrightarrow{\quad}$
      \Cref{item:clscac-unif}
      $\xLeftrightarrow{\quad}$
      \Cref{item:clscac-clim}}
  \end{equation*}
  for arbitrary compact groups; we can thus treat the three conditions as interchangeable in the subsequent discussion.

  \begin{enumerate}[label={}, leftmargin=*, wide=0pt]  

  \item {\bf \Cref{item:clscac-fin} $\xRightarrow{}$ \Cref{item:clscac-allsubgp}:} Since every irreducible $\bG$-representation is finite-dimensional \cite[Theorem 3.51]{hm4}, the finitely many appearing in the decomposition of $\rho$ restrict to finitely many over any closed $\bH\le \bG$.

  \item {\bf \Cref{item:clscac-allsubgp} $\xRightarrow{}$ \Cref{item:clscac-g0profin}:} Obvious.

  \item {\bf \Cref{item:clscac-g0profin} $\xRightarrow{}$ \Cref{item:clscac-clim}:} \cite[Theorem 9.41]{hm4} ensures the existence of a profinite subgroup $\bD\le \bG$ that ``almost'' supplements the identity component, in the sense that $\bG=\bG_0\cdot \bD$. Since $\bG_0\trianglelefteq \bG$ is normal, it is normalized by $\bD$ and hence
    \begin{equation*}
      \rho(\bG) = \rho(\bG_0)\cdot \rho(\bD)\le GL(E).
    \end{equation*}
    In particular, $\rho(\bG)$ is compact if and only if $\rho(\bD)$ and $\rho(\bG_0)$ both are.     
  \end{enumerate}    
  We now have      
    \begin{equation}\label{eq:a2f}
      \text{
        \Cref{item:clscac-fin}
        $\xLeftrightarrow{\quad}$
        \Cref{item:aprime}
        $\xLeftrightarrow{\quad}$
        \Cref{item:clscac-unif}
        $\xLeftrightarrow{\quad}$
        \Cref{item:clscac-clim}
        $\xLeftrightarrow{\quad}$
        \Cref{item:clscac-allsubgp}
        $\xLeftrightarrow{\quad}$
        \Cref{item:clscac-g0profin}
      },
    \end{equation}
    and proceed with the other implications (involving restrictions to {\it (pro-)nilpotent} subgroups). 

    \begin{enumerate}[label={}, leftmargin=*, wide=0pt]  
      
    \item {\bf \Cref{item:clscac-allsubgp} $\xRightarrow{}$ \Cref{item:clscac-maxab} $\xRightarrow{}$ \Cref{item:clscac-toriprof} $\xLeftrightarrow{}$ \Cref{item:clscac-singletor}:} The forward implications are all formal, since each subsequent condition recapitulates the former for a smaller class of groups.

      It is also clear that maximality is optional in both \Cref{item:clscac-maxab} and \Cref{item:clscac-toriprof} (i.e. omitting it does not affect the strength of either condition), since every closed (connected) abelian subgroup of $\bG$ is contained in a maximal such (Zorn's Lemma \cite[Theorem 5.4]{jech_st}). 

      Finally, the single backward implication \Cref{item:clscac-toriprof} $\xLeftarrow{}$ \Cref{item:clscac-singletor} follows from the fact that maximal pro-tori are all conjugate \cite[Theorem 9.32 (i)]{hm4}.

    \item {\bf \Cref{item:clscac-singletor} $\xRightarrow{}$ \Cref{item:clscac-fin} ($\bG$ connected):} In this case it is in fact sufficient to demand \Cref{item:clscac-fin} (or \Cref{item:clscac-unif}, or \Cref{item:clscac-clim}) for a maximal pro-torus (i.e. the profinite arm of the condition is not needed). Indeed, this is effectively what the proof of the implication \Cref{item:clscac-unif} $\xRightarrow{}$ \Cref{item:clscac-fin} given above in fact shows.

    \item {\bf \Cref{item:clscac-singletor} $\xRightarrow{}$ \Cref{item:clscac-fin} (reduction to profinite groups):} The preceding argument, addressing the connected-$\bG$ case, already proves that the restriction of $\rho|_{\bG_0}$ to the identity connected component $\bG_0\le \bG$ satisfies the requisite conditions. We can then indeed focus on a profinite subgroup of $\bG$ by \Cref{eq:a2f} (specifically: \Cref{item:clscac-g0profin} $\xRightarrow{}$ \Cref{item:clscac-fin}).
      
    \item {\bf \Cref{item:clscac-singletor} $\xRightarrow{}$ \Cref{item:clscac-fin} (conclusion):} The preceding step allows us to assume that $\bG$ is profinite to begin with. Now, regardless of the uniformity of $\rho$, $\ker\rho\trianglelefteq \bG$ is a closed subgroup so that the quotient $\overline{\bG}:=\bG/\ker\rho$ is compact and faithfully represented on $E$ via $\rho$. The goal is to prove the finiteness of $\overline{\bG}$.

      We consider the two possibilities. 
      \begin{itemize}[wide]
      \item {\bf $\overline{\bG}$ is not torsion.} There is then an element $x\in\bG$ generating a closed group with infinite image in $\overline{\bG}$. Restricting $\rho$ to that (abelian!) group gives the desired contradiction.
        
      \item {\bf $\overline{\bG}$ is torsion.} In that case there is \cite[Theorem 1]{wils_tors} a chain
        \begin{equation*}
          \ker\rho=:\bG_0\le \bG_1\le \cdots\le \bG_n:=\bG,
        \end{equation*}
        with
        \begin{equation*}
          \overline{\bG}_i:=\bG_i/\bG_0\trianglelefteq \overline{\bG}\text{ characteristic},
        \end{equation*}
        and with the subquotients
        \begin{equation*}
          \bG_{i+1}/\bG_i\cong \overline{\bG}_{i+1}/\overline{\bG}_i\text{ either }
          \begin{cases}
            \text{pro-$p$ (for various primes $p$)}\\
            \text{or }\cong \bS_i^{\beth_i},\quad \bS_i\text{ finite simple},\quad \beth_i\text{ some cardinal}.
          \end{cases}
        \end{equation*}
        We can now expand the scope of the target claim by substituting for $\ker\rho$ closed normal subgroups to which $\rho$ restricts to a finite-image representation, and thus proceed by induction on $n$, one layer $\bG_i$ at a time. All in all, the goal is to argue that $\rho$ has finite image provided it does so when restricted to some closed $\bH\trianglelefteq \bG$, as well as the pro-$p$ subgroups of $\bG$ for various $p$, and the quotient $\bG/\bH$ is either pro-$p$ or $\bS^{\beth}$ for simple finite $\bS$.

        Now, if $\bG/\bH$ is pro-$p$ then some {\it $p$-Sylow subgroup} \cite[Definition 2.2.1]{wils_prof_bk} $\bK\le \bG$ surjects onto it by general profinite Sylow theory as covered in \cite[\S 2]{wils_prof_bk}, because the {\it supernatural number} \cite[\S 2.1]{wils_prof_bk} $|\bG/\bH|$ divides the largest, possibly infinite $p$ power dividing $|\bG|$, etc. We are now done: $\rho$ is finite when restricted to both $\bH$ and the subgroup $\bK$ normalizing it.

        Assume next that
        \begin{equation*}
          \bG/\bH\cong \bS^{\beth}\cong \prod_{\daleth<\beth}\bS_{\daleth},\quad \bS_{\daleth}\cong \bS
        \end{equation*}
        for some finite simple $\bS$ and cardinal $\beth$. A simple pigeonhole argument shows that the infinitude of $\im\rho$ entails that of the group generated by $\rho(x_{\scriptscriptstyle\daleth})$, $\daleth<\beth$ where $x_{\scriptscriptstyle\daleth}\in \bS_{\daleth}$ is the copy of a single prime-order element $x\in \bS$ in the $\daleth$-indexed copy of $\bS^{\beth}$. But then we can fall back on the preceding branch on the argument, given that $(x_{\scriptscriptstyle\daleth})_{\daleth<\beth}$ belongs to a Sylow subgroup of $\bS^{\beth}$.
      \end{itemize}

      % % The preceding step allows us to assume that $\bG$ is profinite to begin with. The claim, then, is this: if the image through $\rho$ of every abelian subgroup of $\bG$ is finite, then so is $\rho(\bG)$; equivalently, the quotient $\bG/\ker\rho$ is finite (since $\ker\rho\trianglelefteq \bG$ is normal and closed in any case, regardless of the {\it uniform} continuity of $\rho$). Stated thus, the conclusion follows immediately from the fact \cite[Theorem 2]{zel_cpct} that infinite compact groups (e.g. $\bG/\ker\rho$, if assumed infinite for contradiction's sake) have infinite abelian subgroups.
      % % 

      {\bf \Cref{item:clscac-singletor} $\xLeftrightarrow{}$ \Cref{item:clscac-singletor-abim}:} The interesting implication ($\xLeftarrow{}$) follows from the fact \cite[Theorem 2]{zel_cpct} that infinite compact groups have infinite abelian subgroups. 
    \end{enumerate}
  This completes the proof of the theorem.
\end{proof}

%\newpage % THM: END

\begin{remarks}\label{res:hmex}
  \begin{enumerate}[(1), leftmargin=*, wide=0pt]
  \item Cf. also \cite[Exercise E4.8]{hm4}, which parts of \Cref{th:clscpct} echo: there, a representation of a compact group $\bG$ on direct-product (locally convex) space $\bR^I$ is shown to have finitely many isotypic components provided all vectors of the underlying space are locally finite (or $\bG$-finite: \Cref{def:alphaisot}\Cref{item:def:alphaisot-gfin}). In other words, the implication \Cref{item:clscac-fin} $\xLeftarrow{}$ \Cref{item:aprime} of \Cref{th:clscpct} in that slightly different setup.

  \item A version of the equivalence \Cref{item:clscac-fin} $\xLeftrightarrow{}$ \Cref{item:clscac-unif} for {\it unitary} representations of {\it connected} locally compact groups appears as the main result of \cite{zbMATH03289866}.
  \end{enumerate}
\end{remarks}

Given that a compact group is generated topologically by its closed abelian (hence also pro-nilpotent) subgroups, \Cref{th:clscpct}\Cref{item:clscac-maxab} might suggest a kind of local-to-global principle whereby uniformity lifts from a topologically-generating family to the full group $\bG$. No such principle can hold in this generality: every representation is uniform when restricted to the individual factors $\bH_i$ of a product $\bG\cong\prod_i\bH_i$ of {\it finite} groups, without this entailing uniformity on $\bG$. Even for {\it finite} generating families, this cannot go through:

\begin{example}\label{ex:genunif}
  Consider the coproduct $\bG:=\bZ/2\coprod_{\cat{CGp}} \bZ/2$ in the category of compact groups: this is the {\it Bohr compactification} (the {\it AP-compactification} of \cite[\S III.9]{bjm}) of the usual discrete-group coproduct (or free product \cite[Definition preceding Lemma 11.49]{rot-gp})
  \begin{equation*}
    \braket{a}\coprod_{\cat{Gp}}\braket b
    \ \cong\ 
    \bZ/2\coprod_{\cat{Gp}}\bZ/2
    \xrightarrow[\quad\cong\quad]{\text{\cite[Exercise 11.63]{rot-gp}}}
    \bZ\rtimes \bZ/2
    \ \cong\ 
    \braket{ab}\rtimes\braket{a},
  \end{equation*}
  for respective generators $a$ and $b$ for the two copies of $\bZ/2$. Naturally, every unitary representation of $\bG$, including non-uniform ones, will restrict to finite-spectrum $\bZ/2$-representations. The two copies of $\bZ/2$, though, topologically generate $\bG$.
\end{example}

There are also the expected links to the representations induced by \Cref{eq:gonrho} on various algebras of operators on $E$. 

\begin{corollary}\label{cor:actonalgs}
  For a representation \Cref{eq:gonrho} of a compact group on a Banach space the following conditions are equivalent.
  \begin{enumerate}[(a)]

  \item\label{item:cor:actonalgs-rhounif} $\rho$ is uniform.

  \item\label{item:cor:actonalgs-endunif} The conjugation action
    \begin{equation}\label{eq:conjact}
      \bG\ni g\xmapsto{\quad}\rho(g)\cdot\rho(g)^{-1}\in GL\left(\cL(E)\right)
    \end{equation}
    is uniform.

  \item\label{item:cor:actonalgs-idealunif} The conjugation action \Cref{eq:conjact} is uniform when regarded as a representation on any norm-closed ideal $\cI\trianglelefteq \cL(E)$.

  \item\label{item:cor:actonalgs-finidealunif} The conjugation action \Cref{eq:conjact} restricts to a uniform representation on the norm closure
    \begin{equation*}
      \cL_0(E)\cong E\otimes_{\varepsilon}E^*\ (\text{minimal Banach tensor product: \cite[preceding (3.4)]{ryan_ban}})
    \end{equation*}
    of the ideal of finite-rank operators.   
  \end{enumerate}
\end{corollary}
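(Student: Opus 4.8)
The plan is to establish the cycle of implications \Cref{item:cor:actonalgs-rhounif} $\Rightarrow$ \Cref{item:cor:actonalgs-endunif} $\Rightarrow$ \Cref{item:cor:actonalgs-idealunif} $\Rightarrow$ \Cref{item:cor:actonalgs-finidealunif} $\Rightarrow$ \Cref{item:cor:actonalgs-rhounif}, with only the last arrow requiring genuine work. Throughout I abbreviate the conjugation action \Cref{eq:conjact} by $\mathrm{Ad}_{\rho}$, and I use \Cref{th:clscpct} freely: for a representation of a compact group, ``uniform'', ``finite spectrum'', and ``norm-continuous as a map into the ambient $GL(-)$'' are interchangeable.

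For the three easy arrows, note first that the conjugation homomorphism $\mathrm{Ad}\colon GL(E)\to GL(\cL(E))$, $u\mapsto u(-)u^{-1}$, is norm-continuous: this is immediate from $uTu^{-1}-u_0Tu_0^{-1}=(u-u_0)Tu^{-1}+u_0T(u^{-1}-u_0^{-1})$ together with the norm-continuity of inversion on $GL(E)$. Thus if $\rho$ is uniform --- equivalently norm-continuous --- then $\mathrm{Ad}_{\rho}=\mathrm{Ad}\circ\rho$ is norm-continuous, hence uniform, giving \Cref{item:cor:actonalgs-rhounif} $\Rightarrow$ \Cref{item:cor:actonalgs-endunif}. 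For \Cref{item:cor:actonalgs-endunif} $\Rightarrow$ \Cref{item:cor:actonalgs-idealunif}: a norm-closed ideal $\cI\trianglelefteq\cL(E)$ is in particular a closed $\mathrm{Ad}_{\rho}$-invariant subspace, and the restriction of a norm-continuous representation to a closed invariant subspace is again norm-continuous (operator norms can only decrease upon restriction), hence uniform. Finally \Cref{item:cor:actonalgs-idealunif} $\Rightarrow$ \Cref{item:cor:actonalgs-finidealunif} is just the case $\cI:=\cL_0(E)$, this being precisely the norm-closure of the two-sided ideal of finite-rank operators.

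It remains to prove \Cref{item:cor:actonalgs-finidealunif} $\Rightarrow$ \Cref{item:cor:actonalgs-rhounif}. Assume $\mathrm{Ad}_{\rho}|_{\cL_0(E)}$ is uniform, hence of finite spectrum $S_0\subset\Irr(\bG)$, and suppose toward a contradiction that $\mathrm{spec}(\rho)$ is infinite. The key point is that for every finite-dimensional $\bG$-invariant subspace $V\le E$, the space $\End(V)\cong V\otimes V^{*}$ occurs as a subrepresentation of $\bigl(\cL_0(E),\mathrm{Ad}_{\rho}\bigr)$. To see this, start from any bounded projection $\pi_V\colon E\to V$ (which exists since $V$ is finite-dimensional) and form the Haar average $q_V(x):=\int_{\bG}\rho(g)\,\pi_V\,\rho(g)^{-1}x\,dg$, a legitimate $E$-valued Bochner integral because the integrand is norm-continuous in $g$ (it has values in the finite-dimensional $V$, on which $\rho$ acts norm-continuously) and $\sup_g\|\rho(g)\|<\infty$. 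Then $q_V$ is a $\bG$-equivariant bounded idempotent with range $V$; being equivariant it is $\mathrm{Ad}_{\rho}$-fixed, and being finite-rank it lies in $\cL_0(E)$. Consequently $q_V\cL(E)q_V\subseteq\cL_0(E)$ is an $\mathrm{Ad}_{\rho}$-invariant subspace, and a routine check identifies it, as a $\bG$-representation, with $\End(V)$ equipped with the conjugation action $f\mapsto\bigl(\rho(g)|_V\bigr)f\bigl(\rho(g)|_V\bigr)^{-1}$, i.e.\ with $V\otimes V^{*}$. Hence $\mathrm{spec}(V\otimes V^{*})\subseteq S_0$ for every such $V$.

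To close the argument, choose pairwise distinct $\alpha_0,\alpha_1,\alpha_2,\dots\in\mathrm{spec}(\rho)$. Since $\bigoplus_{\alpha}E^{\alpha}$ is dense (\Cref{th:spectralenough}) and consists of $\bG$-finite vectors, each $\alpha_i$ appears as a constituent of $\rho|_{V_i}$ for some finite-dimensional invariant $V_i\le E$; then $V^{(i)}:=V_0+V_i$ is finite-dimensional, invariant, and has both $\alpha_0$ and $\alpha_i$ among the constituents of $\rho|_{V^{(i)}}$, so that $\alpha_i\otimes\alpha_0^{*}$ is a summand of $V^{(i)}\otimes(V^{(i)})^{*}$ and therefore $\mathrm{spec}(\alpha_i\otimes\alpha_0^{*})\subseteq S_0$. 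Fixing an irreducible $\gamma_i\le\alpha_i\otimes\alpha_0^{*}$, we get $\gamma_i\in S_0$, and the tensor--hom adjunction converts $\gamma_i\le\alpha_i\otimes\alpha_0^{*}$ into $\alpha_i\le\gamma_i\otimes\alpha_0$, so that $\alpha_i\in\bigcup_{\gamma\in S_0}\mathrm{spec}(\gamma\otimes\alpha_0)$. The latter set is finite, contradicting the infinitude of $\{\alpha_i\}_{i\ge 1}$; hence $\mathrm{spec}(\rho)$ is finite, i.e.\ $\rho$ is uniform. I expect this last implication to be the main obstacle: the naive description of $\mathrm{Ad}_{\rho}$ on $\cL_0(E)\cong E\otimes_{\varepsilon}E^{*}$ as the tensor product of $\rho$ with the contragredient action on $E^{*}$ is useless here, since that contragredient action need not be even strongly continuous, so one cannot read $\mathrm{spec}(\mathrm{Ad}_{\rho}|_{\cL_0(E)})$ off $\mathrm{spec}(\rho)$ directly --- the Haar-averaging device is exactly what produces enough honest finite-dimensional subrepresentations to feed into the Frobenius-reciprocity count.
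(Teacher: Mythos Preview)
Your argument is correct and follows essentially the same route as the paper's: both run the cycle \Cref{item:cor:actonalgs-rhounif} $\Rightarrow$ \Cref{item:cor:actonalgs-endunif} $\Rightarrow$ \Cref{item:cor:actonalgs-idealunif} $\Rightarrow$ \Cref{item:cor:actonalgs-finidealunif} $\Rightarrow$ \Cref{item:cor:actonalgs-rhounif}, invoke \Cref{th:clscpct} to swap between uniformity and spectrum finiteness, and close \Cref{item:cor:actonalgs-finidealunif} $\Rightarrow$ \Cref{item:cor:actonalgs-rhounif} by observing that infinitely many $\alpha_i\in\mathrm{spec}(\rho)$ force infinitely many non-isomorphic irreducibles among the $\alpha_j\otimes\alpha_i^*$ occurring in $\cL_0(E)$ --- precisely your Frobenius-reciprocity count with one index fixed. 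The differences are minor: for \Cref{item:cor:actonalgs-rhounif} $\Rightarrow$ \Cref{item:cor:actonalgs-endunif} the paper instead writes out the finite splitting $\cL(E)\cong\bigoplus_{\alpha,\beta}\cL(E^{\alpha},E^{\beta})$ rather than using your direct norm-continuity of $\mathrm{Ad}$, and for the last implication it simply asserts that $\alpha_j\otimes\alpha_i^*$ appears in $\cL_0(E)$ without spelling out, as you do via the Haar-averaged equivariant projection $q_V$, how to embed $\End(V)$ as an honest $\mathrm{Ad}_\rho$-subrepresentation.
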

\begin{proof}
  We take for granted the equivalence of uniformity and isotypic finiteness (\Cref{th:clscpct}, \Cref{item:clscac-fin} $\xLeftrightarrow{}$ \Cref{item:clscac-unif}).

  \begin{enumerate}[label={}, leftmargin=*, wide=0pt]

  \item {\bf \Cref{item:cor:actonalgs-rhounif} $\xRightarrow{}$ \Cref{item:cor:actonalgs-endunif}:} Denoting by $E^{\alpha}\le E$ the $\alpha$-isotypic component, the (finite!) linear and topological decomposition
    \begin{equation*}
      E\cong \bigoplus_{\alpha}E^{\alpha}
    \end{equation*}
    gives the corresponding finite decomposition
    \begin{equation*}
      \cL(E)\cong \bigoplus_{\alpha,\beta}\cL\left(E^{\alpha}, E^{\beta}\right). 
    \end{equation*}
    Each summand $\cL\left(E^{\alpha}, E^{\beta}\right)$, as a $\bG$-representation, is of the form $\beta\otimes \alpha^*\otimes F$ for a Banach space $F$, with $\bG$ acting on the $\beta\otimes \alpha^*$ tensorand. The conclusion is now obvious.

  \item {\bf \Cref{item:cor:actonalgs-endunif} $\xRightarrow{}$ \Cref{item:cor:actonalgs-idealunif} $\xRightarrow{}$ \Cref{item:cor:actonalgs-finidealunif}:} all formal.

  \item {\bf \Cref{item:cor:actonalgs-finidealunif} $\xRightarrow{}$ \Cref{item:cor:actonalgs-rhounif}:} For a {\it finite-rank} operator $T\in \cL(E)$ the continuity of
    \begin{equation*}
      \bG\ni g\xmapsto{\quad}\rho(g)T\rho(g)^{-1}\in \cL(E)\quad (\text{norm topology})
    \end{equation*}
    is immediate, and hence so is that of the analogous maps for $T\in \cL_0(E)$. For that reason, the conjugation action \Cref{eq:conjact} does, first off, induce an action on  the Banach space $\cL_0(E)$. But then \Cref{th:clscpct} takes over to show that that action is {\it uniformly} continuous precisely if it has finitely many isotypic components.

    Now, for non-zero isotypic components $E^{\alpha_i}\le E$ the representation $\alpha_j\otimes \alpha_i^*$ appears as a constituent in $\cL_0(E)$. Infinitely many non-zero $E^{\alpha_i}$ will produce infinitely many non-isomorphic summands in the various $\alpha_j\otimes \alpha_i^*$, so that indeed the finite-isotypic-component requirement for $\cL_0(E)$ entails that for $E$.
  \end{enumerate}
\end{proof}

A subtle issue in the proof of \Cref{cor:actonalgs} can be expanded constructively. In the implication \Cref{item:cor:actonalgs-finidealunif} $\xRightarrow{}$ \Cref{item:cor:actonalgs-rhounif}, we had to first argue that the conjugation action actually {\it is} a Banach-space action in the usual sense on the ideal $\cL_0(E)$ of {\it approximable} \cite[p.46, paragraph following (3.3)]{ryan_ban} operators. This is not a moot point: \cite[discussion preceding 2.7.1 Definition]{zbMATH04028180}, for instance, observes that given a compact-group action
\begin{equation*}
  \bG\times A\xrightarrow{\quad\triangleright \quad}A
\end{equation*}
on a $C^*$-algebra $A$, the resulting action on the multiplier algebra $M(A)$ is ``unlikely to be continuous'' for the latter's norm action, in the sense that
\begin{equation*}
  \bG\ni g\xmapsto{\quad}g\triangleright a\in M(A)
\end{equation*}
need not, presumably, be continuous for the {\it norm} topology on $M(A)$ for {\it arbitrary} $a\in M(A)$. This of course connects back to \Cref{cor:actonalgs}: if the representation \Cref{eq:gonrho} is unitary on a Hilbert space $H$, then
\begin{equation*}
  \cL_0(H) = \cK(H)\text{ \cite[\S I.8.1.5]{blk}}
  \quad\text{and}\quad
  M(\cK(H))=\cL(H)\text{ \cite[Example II.7.3.12(ii)]{blk}}. 
\end{equation*}
It turns out that in this unitary-representation setting the aforementioned remark in \cite[\S 2.7]{zbMATH04028180} precisely delineates the uniform representations. We relegate the more general, quantum version to \Cref{th:uniffinspec}, and record the classical consequence here: 

\begin{corollary}\label{cor:actonma}
  Let $\bG$ be a compact group and $\bG\xrightarrow{\rho}\bU(H)$ a unitary representation on a Hilbert space.

  $\rho$ is uniform if and only if the conjugation action \Cref{eq:conjact} is a representation on the Banach space $\cL(H)$, in which case it will automatically be uniform.  \qedhere
\end{corollary}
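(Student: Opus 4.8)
The plan is to prove the two implications separately; the forward one is essentially a replay of \Cref{cor:actonalgs} in the unitary case, while the reverse one needs a concrete ``bad'' operator on $\cL(H)$. For the forward implication, assume $\rho$ is uniform. By \Cref{th:clscpct} (\Cref{item:clscac-fin} $\Leftrightarrow$ \Cref{item:clscac-unif}) it has finite spectrum, so $H$ is a \emph{finite} topological direct sum $\bigoplus_{\alpha\in S}H^{\alpha}$ of isotypic components. The resulting finite decomposition $\cL(H)\cong\bigoplus_{\alpha,\beta\in S}\cL(H^{\alpha},H^{\beta})$ is into conjugation-invariant summands, and, writing $H^{\alpha}\cong V_{\alpha}\otimes M_{\alpha}$ with $\rho$ acting as $\rho_{\alpha}\otimes\id$, each $\cL(H^{\alpha},H^{\beta})$ is under conjugation a $\bG$-representation with the action concentrated on the finite-dimensional tensorand $\rho_{\beta}\otimes\rho_{\alpha}^{*}$, hence of finite spectrum. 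So conjugation on $\cL(H)$ has finite spectrum and is therefore uniformly continuous by \Cref{le:unif2finsp}; in particular it is a representation on $\cL(H)$. (This is exactly \Cref{cor:actonalgs}, \Cref{item:cor:actonalgs-rhounif}$\Rightarrow$\Cref{item:cor:actonalgs-endunif}, specialized to $E=H$, and together with the reverse implication it also gives the ``in which case it will automatically be uniform'' clause.)

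For the reverse implication I would argue the contrapositive: if $\rho$ is not uniform, conjugation is not a representation on $\cL(H)$. By \Cref{th:clscpct}, $\rho$ has infinitely many non-zero isotypic components; choose distinct $\rho_{1},\rho_{2},\dots\in\Irr(\bG)$ occurring in $\rho$, a single copy $V_{n}\cong\rho_{n}$ inside the $n$-th component, and pass to the closed $\bG$-invariant subspace $H_{0}:=\overline{\bigoplus}_{n}V_{n}$. Since the orthogonal projection $P_{0}\colon H\to H_{0}$ commutes with $\rho(\bG)$, conjugation by $\rho$ restricts on the corner $P_{0}\cL(H)P_{0}\cong\cL(H_{0})$ to conjugation by $\rho|_{H_{0}}=\overline{\bigoplus}_{n}\rho_{n}$, and norm-continuity of the former would force that of the latter; so we may assume $\rho=\overline{\bigoplus}_{n}\rho_{n}$. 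The key structural observation is that under conjugation the $(n,k)$-block of $\cL(H)$ is the \emph{finite-dimensional} $\bG$-representation $\cL(V_{k},V_{n})\cong\rho_{n}\otimes\rho_{k}^{*}$, and the isotypic projections $\pi^{\beta}$ of \Cref{th:spectralenough} act block by block; hence $\cL(H)^{\beta}$ is supported on the block-pattern $P_{\beta}=\{(n,k):\beta\le\rho_{n}\otimes\rho_{k}^{*}\}$. Because $\beta\le\rho_{n}\otimes\rho_{k}^{*}$ forces $\rho_{n}\le\beta\otimes\rho_{k}$, a fixed finite-dimensional representation, for each $\beta$ and each $k$ only finitely many $n$ qualify (and symmetrically in $k$).

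Enumerating the countably many $\beta$ with $\cL(H)^{\beta}\ne 0$ as $\beta_{1},\beta_{2},\dots$, a greedy selection then yields an infinite family of pairwise disjoint index-pairs $\{a_{t},b_{t}\}$ with $\beta_{j}\not\le\rho_{b_{t}}\otimes\rho_{a_{t}}^{*}$ whenever $j\le t$: at stage $t$, pick $a_{t}$ the least unused index, then $b_{t}$ the least unused index avoiding the finitely many forbidden partners coming from $\beta_{1},\dots,\beta_{t}$. Set $T:=\bigoplus_{t}T_{t}$ with $T_{t}\colon V_{a_{t}}\to V_{b_{t}}$ a norm-one rank-one operator; by disjointness $T$ is a well-defined partial isometry of norm $1$ on $H$. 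Now any $\bG$-finite operator $B$ (in the sense of \Cref{def:alphaisot}\Cref{item:def:alphaisot-gfin}) lies in $\bigoplus_{\beta\in F}\cL(H)^{\beta}$ for some finite $F$, hence has vanishing $(n,k)$-block off $\bigcup_{\beta\in F}P_{\beta}$; by construction this excludes all but finitely many pairs $(b_{t},a_{t})$, so compressing $T-B$ to one of the surviving blocks gives $\|T-B\|\ge\|T_{t}\|=1$. Thus $\bigoplus_{\beta}\cL(H)^{\beta}$ is not dense in $\cL(H)$, so by \Cref{th:spectralenough} conjugation cannot be a continuous $\bG$-action on $\cL(H)$, as required.

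I expect the construction of $T$ to be the main obstacle: the delicate part is the greedy/matching argument guaranteeing that \emph{no} $\bG$-finite operator comes within a fixed distance, together with the bookkeeping identifying $\cL(H)^{\beta}$ with operators supported on the locally finite pattern $P_{\beta}$. The reductions (passage to $H_{0}$, compressions by $P_{0}$) and the entire forward direction are routine once \Cref{th:clscpct}, \Cref{th:spectralenough} and \Cref{cor:actonalgs} are in hand.
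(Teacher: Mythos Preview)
Your proof is correct. The forward direction is indeed just \Cref{cor:actonalgs} specialized to $E=H$, as in the paper. For the reverse direction, though, the paper takes a shorter route (via \Cref{th:uniffinspec}, of which this corollary is the classical specialization): rather than enumerating all $\beta$ in the conjugation spectrum and running your greedy matching, it inductively selects a subsequence $\beta_1,\beta_2,\ldots$ from the spectrum of $\rho$ so that the link representations $\beta_{n+1}\otimes\beta_n^*$ are \emph{mutually disjoint}. This is always possible: having chosen $\beta_1,\ldots,\beta_n$, only finitely many candidates for $\beta_{n+1}$ are forbidden, since $\gamma\le\beta_{n+1}\otimes\beta_n^*$ forces $\beta_{n+1}\le\gamma\otimes\beta_n$. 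Then norm-one operators $T_n\colon H^{\beta_n}\to H^{\beta_{n+1}}$ assemble into a norm-one $T\in\cL(H)$, and any $\bG$-finite $S$ must have its $(\beta_{n+1},\beta_n)$-block vanish for all but finitely many $n$ by disjointness, so $\|T-S\|\ge 1$.

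This avoids both your reduction to the multiplicity-one subspace $H_0$ and the bookkeeping of the pattern sets $P_\beta$. Your argument buys no extra generality here; the subsequence trick is strictly more economical and works directly on the full isotypic components.
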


In fact, one need not {\it start} with a unitary representation:

\begin{corollary}\label{cor:actonuhunif}
  A continuous action of a compact group on the $C^*$-algebra $\cL(H)$ is automatically uniformly continuous, and hence has finite spectrum. 
\end{corollary}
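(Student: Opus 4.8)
The plan is to pass, through the ideal $\cK(H)\trianglelefteq\cL(H)$, to a unitary representation of a compact central extension of $\bG$, and then invoke \Cref{cor:actonma}. First I would record that $\cK(H)$ is contained in every non-zero norm-closed two-sided ideal of $\cL(H)$, hence is invariant under every $*$-automorphism. Thus a continuous $\bG$-action $\triangleright$ on $\cL(H)$ restricts to a continuous $\bG$-action on $\cK(H)=\cL_0(H)$; conversely $\triangleright$ is the \emph{unique} automorphic extension of this restriction along $\cK(H)\hookrightarrow M(\cK(H))=\cL(H)$ (an automorphism of $\cK(H)$ extends uniquely to its multiplier algebra).

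Next I would use that every $*$-automorphism of $\cK(H)$ is of the form $\mathrm{Ad}(u)$ for a unitary $u\in\bU(H)$, determined up to a unimodular scalar, so that $\Aut(\cK(H))\cong PU(H):=\bU(H)/\bT$. The homomorphism $\bG\to PU(H)$ so obtained is continuous, point-norm continuity on $\cK(H)$ corresponding to continuity into $PU(H)$ with the quotient of the strong topology. Pulling back the principal $\bT$-bundle $\bU(H)\to PU(H)$ along it yields a central extension $1\to\bT\to\widetilde{\bG}\to\bG\to 1$; since $\bT$ is compact the bundle projection is proper, the image of $\bG$ in $PU(H)$ is compact, and $\widetilde{\bG}$ is closed in $\bG$ times a compact subset of $\bU(H)$, hence compact. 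By construction $\widetilde{\bG}$ carries a strongly continuous unitary representation $\widetilde\rho\colon\widetilde{\bG}\to\bU(H)$ with $g\triangleright T=\widetilde\rho(\widetilde g)\,T\,\widetilde\rho(\widetilde g)^{*}$ for every lift $\widetilde g$ of $g$ and every $T\in\cK(H)$, hence — by the uniqueness of extension just noted — for every $T\in\cL(H)$.

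Now the conjugation action of $\widetilde{\bG}$ attached to $\widetilde\rho$ is exactly $\triangleright$ precomposed with the continuous surjection $\widetilde{\bG}\to\bG$, so it is a norm-continuous representation on the Banach space $\cL(H)$. Then \Cref{cor:actonma} forces $\widetilde\rho$ to be uniform, equivalently (\Cref{th:clscpct}) to have finite spectrum, so $H$ is a finite direct sum of $\widetilde{\bG}$-isotypic components $H^{\alpha}$. Consequently $\cL(H)\cong\bigoplus_{\alpha,\beta}\cL(H^{\alpha},H^{\beta})$ is a finite direct sum, each summand a finite-spectrum conjugation representation, so $\triangleright$ has finite spectrum as a $\widetilde{\bG}$-representation; since the central $\bT$ acts trivially by conjugation this decomposition is already one over $\bG=\widetilde{\bG}/\bT$ and is still finite, and \Cref{le:unif2finsp} gives uniform continuity of $\triangleright$ (whence also finite spectrum, as claimed).

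The step I expect to be the main obstacle is the construction of the continuous implementing representation $\widetilde\rho$: identifying $\Aut(\cK(H))$ with $PU(H)$ as topological groups and building the compact central extension together with an honestly continuous $\widetilde\rho$ is the standard theory of implementations of group actions on the compact operators, which I would either cite or reprove directly, using compactness of $\bG$ and $\bT$ to select the unimodular scalars in a continuous way.
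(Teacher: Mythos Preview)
Your proof is correct and follows essentially the same route as the paper: identify the action with a projective unitary representation via $\Aut(\cL(H))\cong PU(H)$, lift to an honest unitary representation of a compact central extension by $\bT$, and invoke \Cref{cor:actonma}. The detour through $\cK(H)$ is unnecessary (the paper works directly with $\Aut(\cL(H))$, citing \cite{arv} and \cite{vrd}), and the paper dispatches the construction of the compact extension and its continuous implementing representation by reference to \cite[Theorems 7.5 and 7.8]{vrd} rather than redoing the pullback argument, but otherwise your outline matches.
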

\begin{proof}
  Indeed, because the automorphism group of the $C^*$-algebra $\cL(H)$ is (\cite[Corollary 3 to Theorem 1.4.4]{arv}, \cite[Example II.5.5.14]{blk}) the {\it projective unitary group} $\bP\bU(H):=\bU(H)/\bS^1$ (the group $\cP$ of \cite[\S VIII.1]{vrd}), an action is implemented \cite[Theorem 7.5 and discussion preceding it]{vrd} by a {\it projective representation} of $\bG$ on $H$, and hence \cite[Theorem 7.8]{vrd} by a unitary representation on $H$ of a (compact) group $\bE$ fitting into an exact sequence
  \begin{equation*}
    \{1\}
    \xrightarrow{}
    \bS^1
    \lhook\joinrel\xrightarrow{\quad}    
    \bE
    \xrightarrowdbl{\quad}
    \bG
    \xrightarrow{}
    \{1\}.
  \end{equation*}
  Now apply \Cref{cor:actonma}.
\end{proof}

\subsection{Returning to quantum groups}\label{subse:cqg}

Much of the classical discussion above does not have obvious quantum counterparts: connectedness (\cite[p.3315]{zbMATH05561674}, \cite[Deﬁnition 3.1]{zbMATH06349096}) and identity components \cite[Deﬁnition 3.11]{zbMATH06349096} can be made sense of, but are good deal more problematically (than classically). The usual \cite[Theorem 1.34]{hm4} relationship between total disconnectedness and profiniteness fails massively \cite[Definition 3.21 and Proposition 3.23]{zbMATH06349096}, the {\it supplement theorem} of Dong Hoon Lee \cite[Theorem 9.41]{hm4}, used in the proof of \Cref{th:clscpct} (implication \Cref{item:clscac-g0profin} $\xRightarrow{}$ \Cref{item:clscac-clim}) is absent, etc. The crucial link between uniformity and spectrum compactness does survive in at least some of its guises though.

% % , in its most basic unitary-representation guise, captured by the equivalence \Cref{item:clscac-fin} $\xLeftrightarrow{}$ \Cref{item:clscac-g2gl} of \Cref{th:clscpct}. 
% %

\begin{theorem}\label{th:uniffinspec}
  For a unitary representation \Cref{eq:uonh} of a compact quantum group $\bG$ the following conditions are equivalent.
  \begin{enumerate}[(a)]
  \item\label{item:th:uniffinspec-finspec} $U$ has finite spectrum.

  \item\label{item:th:uniffinspec-unifactonbh} The conjugation $\bG$-action
    \begin{equation}\label{eq:actonkh}
      \cK(H)\ni T
      \xmapsto{\quad}
      U^*(1\otimes T)U
      \in \cC(\bG)\underline{\otimes}\cK(H)
    \end{equation}
    lifts to a uniform action on all of $\cL(H)$.

  \item\label{item:th:uniffinspec-actonbh} The action \Cref{eq:actonkh} lifts to an action on $\cL(H)$ (not assumed uniform, a priori).
  \end{enumerate}
\end{theorem}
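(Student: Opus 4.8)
The plan is to run the cycle \Cref{item:th:uniffinspec-finspec}$\Rightarrow$\Cref{item:th:uniffinspec-unifactonbh}$\Rightarrow$\Cref{item:th:uniffinspec-actonbh}$\Rightarrow$\Cref{item:th:uniffinspec-finspec}. For \Cref{item:th:uniffinspec-finspec}$\Rightarrow$\Cref{item:th:uniffinspec-unifactonbh}: when $\mathrm{spec}(U)$ is finite the isotypic decomposition $H\cong\bigoplus_{\alpha\in\mathrm{spec}(U)}H^{\alpha}$ is finite, hence so is $\cL(H)\cong\bigoplus_{\alpha,\beta}\cL(H^{\alpha},H^{\beta})$. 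Writing $H^{\alpha}\cong H_{\alpha}\otimes K_{\alpha}$ with $H_{\alpha}$ the finite-dimensional irreducible carrier (on which $U$ restricts to $u^{\alpha}\otimes 1$) and $K_{\alpha}$ the multiplicity space, the conjugation action by $U$ turns each block $\cL(H^{\alpha},H^{\beta})$ into the $\bG$-representation $(\beta\otimes\alpha^{*})\otimes\cL(K_{\alpha},K_{\beta})$, with $\bG$ acting through the finite-dimensional tensorand alone; this is visibly a (Banach-space, hence $C^{*}$-algebraic) action, with spectrum among the constituents of $\beta\otimes\alpha^{*}$. Summing the finitely many blocks, the conjugation action on $\cL(H)$ is a genuine action of finite spectrum, which \Cref{le:unif2finsp} upgrades to a uniform one. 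The implication \Cref{item:th:uniffinspec-unifactonbh}$\Rightarrow$\Cref{item:th:uniffinspec-actonbh} is immediate.

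The content is in \Cref{item:th:uniffinspec-actonbh}$\Rightarrow$\Cref{item:th:uniffinspec-finspec}, which I would prove by contraposition. First note that any action $\sigma\colon\cL(H)\to\cC(\bG)\underline{\otimes}\cL(H)$ restricting to \Cref{eq:actonkh} on $\cK(H)$ is forced to be $T\mapsto U^{*}(1\otimes T)U$: for $T\in\cL(H)$ and $K\in\cK(H)$ multiplicativity of $\sigma$ together with $\sigma|_{\cK(H)}=$\Cref{eq:actonkh} gives $\bigl(\sigma(T)-U^{*}(1\otimes T)U\bigr)\sigma(K)=0$, and running $K$ through an approximate unit of $\cK(H)$, so that $\sigma(K)=U^{*}(1\otimes K)U\to 1$ strictly in $M(\cC(\bG)\underline{\otimes}\cK(H))$, forces the claim. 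Now assume $\mathrm{spec}(U)$ is infinite. A greedy selection yields distinct $\alpha_{1},\alpha_{2},\dots\in\mathrm{spec}(U)$ that are \emph{dispersed}: for every $\gamma\in\Irr(\bG)$ only finitely many $n$ have $\gamma$ or its conjugate among the constituents of $\alpha_{n}\otimes\alpha_{n+1}^{*}$ — at stage $n$ one avoids the finitely many classes $\alpha'$ with $\alpha'\le\alpha_{n}\otimes\gamma$ or $\alpha_{n}\le\alpha'\otimes\gamma$ for a $\gamma$ used earlier, which is possible because $\mathrm{spec}(U)$ is infinite while each $\alpha_{n}\otimes\gamma$ has finitely many constituents. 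Choose unit vectors $\xi_{n}\in H^{\alpha_{n}}$ and set $T:=\sum_{n}\theta_{\xi_{n+1},\xi_{n}}\in\cL(H)$; this is a well-defined noncompact operator of norm $1$ (the summands have pairwise orthogonal ranges and domains), with $p_{\alpha_{n+1}}Tp_{\alpha_{n}}=\theta_{\xi_{n+1},\xi_{n}}$, where $p_{\alpha}$ is the projection onto $H^{\alpha}$.

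The crux is then that $T$ sits at distance $\ge 1$ from every $\bG$-finite element of $\cL(H)$, contradicting \Cref{th:spectralenough}. If $T'\in\cL(H)$ is $\bG$-finite with spectrum inside a finite $F\subseteq\Irr(\bG)$, the spectral formalism behind \Cref{th:spectralenough} puts $\sigma(T')$ in the algebraic $\bigoplus_{\alpha\in F}\cO(\bG)_{\alpha}\otimes\cL(H)$, whereas $(1\otimes p_{\delta})\sigma(T')(1\otimes p_{\gamma})=(U^{(\delta)})^{*}(1\otimes p_{\delta}T'p_{\gamma})U^{(\gamma)}$ (with $U^{(\gamma)}$ the $\gamma$-isotypic block of $U$) has $\cC(\bG)$-coefficients in $\overline{\cO(\bG)_{\delta}}\cdot\cO(\bG)_{\gamma}\subseteq\bigoplus_{\beta\le\delta^{*}\otimes\gamma}\cO(\bG)_{\beta}$. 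Comparing the two, $p_{\delta}T'p_{\gamma}=0$ as soon as $F$ meets no constituent of $\delta^{*}\otimes\gamma$; by dispersion this holds at $(\delta,\gamma)=(\alpha_{n+1},\alpha_{n})$ for all but finitely many $n$, and for such $n$ one gets $\|T-T'\|\ge\|p_{\alpha_{n+1}}(T-T')p_{\alpha_{n}}\|=\|\theta_{\xi_{n+1},\xi_{n}}\|=1$. Hence the $\bG$-finite elements of $\cL(H)$ are not dense, contradicting \Cref{th:spectralenough}; so $\mathrm{spec}(U)$ is finite after all.

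The main obstacle is \Cref{item:th:uniffinspec-actonbh}$\Rightarrow$\Cref{item:th:uniffinspec-finspec}, and inside it the fact that mere infinitude of the spectrum does not by itself break continuity of the conjugation action on $\cL(H)$: for the regular representation of $\bT$ on $\ell^{2}(\bZ)$, say, the rank-one shift linking consecutively indexed characters integrates to a perfectly norm-continuous conjugation action despite the infinite spectrum. The construction of the pathological $T$ must therefore be tied to the dispersion of the chosen $\alpha_{n}$, and verifying that a dispersed subsequence can be extracted from an arbitrary infinite $\mathrm{spec}(U)$ — the greedy step, using only that tensor products of irreducibles decompose into finitely many summands — is the technical heart of the argument.
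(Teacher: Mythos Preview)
Your proof is correct and follows essentially the same route as the paper's: the same cycle of implications, and for the substantive direction \Cref{item:th:uniffinspec-actonbh}$\Rightarrow$\Cref{item:th:uniffinspec-finspec} the same construction of a norm-$1$ operator $T$ gluing rank-one maps between well-separated isotypic components $H^{\alpha_n}\to H^{\alpha_{n+1}}$, then arguing $T$ lies at distance $\ge 1$ from every $\bG$-finite element so that the density in \Cref{th:spectralenough} fails. You are in fact more explicit than the paper on two points it leaves to the reader: the greedy extraction of the dispersed subsequence $(\alpha_n)$ (the paper simply asserts one can choose $\beta_n$ with the $\beta_{n+1}\otimes\beta_n^*$ mutually disjoint), and the observation that any lift of the $\cK(H)$-action to $\cL(H)$ is forced to be conjugation by $U$; watch only the order of tensorands when invoking Frobenius reciprocity, since for non-braided $\bG$ the constituents of $\alpha_n\otimes\alpha_{n+1}^*$ and $\alpha_{n+1}^*\otimes\alpha_n$ need not coincide, though your greedy step goes through either way.
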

\begin{proof}
  That \Cref{item:th:uniffinspec-finspec} $\xRightarrow{}$ \Cref{item:th:uniffinspec-unifactonbh} follows from \Cref{re:uisintens} and \Cref{item:th:uniffinspec-unifactonbh} $\xRightarrow{}$ \Cref{item:th:uniffinspec-actonbh} is formal. We address the implication \Cref{item:th:uniffinspec-actonbh} $\xRightarrow{}$ \Cref{item:th:uniffinspec-finspec}, assuming
  \begin{equation}\label{eq:conjactlh}
    \cL(H)
    \ni
    T
    \xmapsto{\quad}
    U^*(1\otimes T)U
    \in
    \cC(\bG)\underline{\otimes}\cL(H)
  \end{equation}
  to be a $\bG$-action on the $C^*$-algebra $\cL(H)$ in the customary sense \cite[Definition 1.4]{podl_symm}.

  Suppose the spectrum of the original representation contains infinitely many $\alpha_i\in \Irr(\bG)$, whence also some
  \begin{equation*}
    \beta_n:=\alpha_{k_n}
    ,\quad
    \beta_{n+1}\otimes\beta_n^*\ \text{mutually disjoint (i.e. no common irreducible summands)}.
  \end{equation*}
  But then norm-1 finite-rank operators
  \begin{equation*}
    H_n:=H^{\beta_n}\xrightarrow{\quad T_n\quad} H^{\beta_{n+1}}=:H_{n+1}
  \end{equation*}
  glue into a single norm-1 operator
  \begin{equation*}
    T\in \cL(H),\quad T|_{H^{\beta_n}} = T_n,
  \end{equation*}
  which by construction has distance $\ge 1$ from any $\bG$-finite operator under the action \Cref{eq:conjactlh}: indeed, for such a $\bG$-finite element $S\in\cL(H)$, by our very choice of $(\beta_n)_n$,
  \begin{equation*}
    \left(\text{projection onto }H^{\beta_{n+1}}\right)
    \circ
    S
    \circ
    \left(\text{projection onto }H^{\beta_{n}}\right)
    =0
  \end{equation*}
  for all but finitely many $n$. But then, because the $\bG$-finite vectors are {\it not} norm-dense in $\cL(H)$, \Cref{eq:conjactlh} cannot \cite[Corollary 1.6]{podl_symm} be a $\bG$-action in the sense of \cite[Definition 1.4]{podl_symm}. 
\end{proof}

%\newpage

%%%%%%%%%%%%%%%%%%%%%%%%%%%%%%%%
%%%%%%%%%%%%%%%%%%%%%%%%%%%%%%%%
\section{Discreteness as action-coproduct preservation}\label{se:discrcopr}

The discussion below will make repeated references to the {\it Stone-\v{C}ech compactification} functor
\begin{equation}\label{eq:beta}
  \cat{Top}
  \xrightarrow{\quad\beta\quad}
  \cat{Top}_{CH}
\end{equation}
from topological to compact Hausdorff spaces, i.e. (\cite[\S V.8]{mcl_2e}, \cite[Definition preceding Lemma 2.4]{cn_ultrafilt}) the left adjoint to the inclusion functor in the opposite direction. 

\begin{theorem}\label{th:discr-coprod-pres}
  For a locally compact group $\bG$ the following conditions are equivalent.

  \begin{enumerate}[(a)]

  \item\label{item:th:discr-coprod-pres-isdiscr} $\bG$ is discrete.

  \item\label{item:th:discr-coprod-pres-sc} Continuous $\bG$-actions lift along Stone-\v{C}ech compactifications $X\xrightarrow{}\beta X$.

  \item\label{item:th:discr-coprod-pres-mult} Continuous $\bG$-actions on $C^*$-algebras $A$ lift along embeddings $A\lhook\joinrel\xrightarrow{}M(A)$ into multiplier algebras. 

  \item\label{item:th:discr-coprod-pres-prod} The forgetful functor $\tensor*[]{\cC}{^*^\bG_1}\xrightarrow{}\cC^*_1$ from unital $\cC^*$-algebras equipped with a $\bG$-action to unital $\cC^*$-algebras preserves products. 
    
  \item\label{item:th:discr-coprod-pres-coprodact} The forgetful functor $\cat{Top}_{CH}^{\bG}\xrightarrow{}\cat{Top}_{CH}$ from compact Hausdorff spaces carrying $\bG$-actions to compact Hausdorff spaces preserves coproducts.

  \end{enumerate}
\end{theorem}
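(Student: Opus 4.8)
The plan is to close the loop $(a)\Rightarrow(b)\Rightarrow(e)\Rightarrow(a)$ together with $(a)\Rightarrow(c)\Rightarrow(d)\Rightarrow(e)$, so that all five statements become interchangeable. The two implications emanating from $(a)$ are soft: when $\bG$ is discrete a ``continuous'' $\bG$-action on a space $X$ (resp.\ on a $C^*$-algebra $A$) is merely an action by homeomorphisms (resp.\ by $*$-automorphisms), and each homeomorphism extends canonically to $\beta X$ by functoriality of $\beta$ (resp.\ each automorphism, being nondegenerate, extends uniquely to an automorphism of $M(A)$); the extended data again constitutes an action by homeomorphisms/automorphisms, hence is automatically continuous because $\bG$ is discrete. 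This gives $(a)\Rightarrow(b)$ and $(a)\Rightarrow(c)$.

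For $(b)\Rightarrow(e)$: given compact Hausdorff $\bG$-spaces $Y_\lambda$, their coproduct in $\cat{Top}_{CH}$ is $\beta(\bigsqcup_\lambda Y_\lambda)$; the disjoint union $\bigsqcup_\lambda Y_\lambda$ is a topological $\bG$-space with jointly continuous action, so by $(b)$ that action lifts to $\beta(\bigsqcup_\lambda Y_\lambda)$, and density of $\bigsqcup_\lambda Y_\lambda$ together with uniqueness of continuous extensions into Hausdorff targets shows this lifted object satisfies the coproduct universal property in $\cat{Top}_{CH}^{\bG}$; hence the forgetful functor carries it back to $\beta(\bigsqcup_\lambda Y_\lambda)$. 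For $(c)\Rightarrow(d)$: the $\cC^*_1$-product $\prod_i A_i$ is the multiplier algebra $M(\bigoplus_i^{c_0}A_i)$ of the $c_0$-direct sum, on which a family of continuous $\bG$-actions induces a visibly continuous coordinatewise action; by $(c)$ this lifts to a continuous action on $\prod_i A_i$, which one checks is again coordinatewise and exhibits $\prod_i A_i$ as the product in $\cC^{*\bG}_1$ — so the forgetful functor preserves products. For $(d)\Rightarrow(e)$: under Gelfand duality $\cat{Top}_{CH}\simeq(\text{commutative unital }C^*\text{-algebras})^{\mathrm{op}}$ a jointly continuous $\bG$-action on a compact Hausdorff space corresponds to a pointwise-norm-continuous $\bG$-action on a commutative unital $C^*$-algebra and coproducts correspond to products, so $(e)$ is the dual of ``the forgetful functor out of commutative $\bG$-$C^*$-algebras preserves products,'' which follows from $(d)$ once one notes that the relevant products of commutative objects computed in $\cC^{*\bG}_1$ stay commutative.

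The one substantive implication is $(e)\Rightarrow(a)$, proved by contraposition. Suppose $\bG$ is not discrete. Then there is a net $g_\lambda\to e$ with $g_\lambda\ne e$, and for each $\lambda$ a Urysohn bump $f_\lambda\in C_c(\bG)$ with $0\le f_\lambda\le 1$, $f_\lambda(e)=1$, supported in a symmetric neighbourhood $V_\lambda$ of $e$ small enough that $g_\lambda V_\lambda\cap V_\lambda=\emptyset$; then $(L_{g_\lambda}f_\lambda)(e)=f_\lambda(g_\lambda^{-1})=0$, so $\|L_{g_\lambda}f_\lambda-f_\lambda\|_\infty\ge1$ for the left-translation action $L$. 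Take the constant family $Y_\lambda:=\bG$ (if $\bG$ is compact) or $Y_\lambda:=\bG^{+}$, the one-point compactification with $\infty$ fixed (otherwise), each with left translation, indexed by the directing set of the net; its coproduct in $\cat{Top}_{CH}$ is $\beta(\Lambda\times Y)$, with function algebra $C_b(\Lambda\times Y)=\ell^\infty(\Lambda,C(Y))$. If the forgetful functor preserved this coproduct, then $\coprod^{\cat{Top}_{CH}^{\bG}}_\lambda Y_\lambda$ would have underlying space $\beta(\Lambda\times Y)$ carrying a continuous $\bG$-action extending the coproduct action on $\Lambda\times Y$; but a continuous $\bG$-action on a compact Hausdorff space induces (via the tube lemma) a norm-continuous action on its function algebra, whereas $F:=(f_\lambda)_\lambda\in\ell^\infty(\Lambda,C(Y))$ satisfies $\|L_{g_\mu}F-F\|_\infty\ge\|L_{g_\mu}f_\mu-f_\mu\|_\infty\ge1$ for every $\mu$ while $g_\mu\to e$ — a contradiction. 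To legitimize this I also need that $\coprod^{\cat{Top}_{CH}^{\bG}}_\lambda Y_\lambda$ exists at all, which I would supply by the equivariant Stone-Čech construction: the subalgebra $B\le C_b(\bigsqcup_\lambda Y_\lambda)$ of $\bG$-continuous bounded functions is a commutative unital $C^*$-algebra with continuous $\bG$-action that separates the points of $\bigsqcup_\lambda Y_\lambda$, and its Gelfand spectrum is readily verified to be the sought coproduct; since the above $F$ lies outside $B$, this coproduct is a proper quotient of $\beta(\Lambda\times Y)$, confirming non-preservation. I expect the main friction to be precisely this bookkeeping — checking the universal property of the equivariant compactification and isolating exactly which coproduct ``preserves coproducts'' must fail on — rather than any single hard estimate.
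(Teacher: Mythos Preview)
Your proof is correct and follows essentially the same route as the paper: the same implication scheme, the same reduction of coproducts/products to Stone--\v{C}ech/multiplier constructions, and the same contrapositive argument for $(e)\Rightarrow(a)$ via a net $g_\lambda\to e$ and bump functions $f_\lambda$ glued across copies of a compactified $\bG$. Your choice of $\bG^+$ in place of the paper's abstract equivariant compactification $b\bG$ is a harmless simplification, and your explicit verification that the coproduct in $\cat{Top}_{CH}^{\bG}$ actually exists (via the spectrum of the $\bG$-continuous subalgebra) is a point the paper glosses over.
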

\begin{proof}    
  \begin{enumerate}[label={}, leftmargin=*, wide=0pt]
  \item {\bf \Cref{item:th:discr-coprod-pres-isdiscr} $\xRightarrow{}$ \Cref{item:th:discr-coprod-pres-sc}, \Cref{item:th:discr-coprod-pres-mult}, \Cref{item:th:discr-coprod-pres-prod}, \Cref{item:th:discr-coprod-pres-coprodact}:} In all cases, the continuity requirement on the various actions amounts to the continuity of a map
    \begin{equation*}
      \bG\xrightarrow{\quad}\cC(c,c) = \End_{\cC}(c)
    \end{equation*}
    for some object $c$ in a category {\it enriched} \cite[\S 2.2]{zbMATH01512858} over $\cat{Top}$. Naturally, if $\bG$ is discrete any such condition will be moot.
    
    % % 
    % % Continuity of a $\bG$-action on a space $X$ means continuity for the corresponding map from $\bG$ to an appropriately-topologized space of self-maps on $X$, so such a condition of course holds automatically when $\bG$ is discrete.
    % % 
    
  \item {\bf \Cref{item:th:discr-coprod-pres-sc} $\xRightarrow{}$ \Cref{item:th:discr-coprod-pres-coprodact}:} Coproducts in $\cat{Top}_{CH}$ are formed by applying the Stone-\v{C}ech compactification functor \Cref{eq:beta} to the disjoint union of topological spaces, so condition \Cref{item:th:discr-coprod-pres-coprodact} amounts to the requirement that $\bG$-actions on $X_i$, $i\in I$ glue to a continuous action on $\beta\left(\coprod_i X_i\right)$. Similarly:
    
  \item {\bf \Cref{item:th:discr-coprod-pres-mult} $\xRightarrow{}$ \Cref{item:th:discr-coprod-pres-prod}:} The product in the category $\cC^*_1$ of unital $C^*$-algebras $A_i$, $i\in I$ is
    \begin{equation*}
      \ell_{\infty}\left((A_i)_i\right)
      :=
      \left\{(a_i)_i\in \prod_i A_i\ |\ \|a_i\|\text{ bounded}\right\}
    \end{equation*}
    (as noted also in \cite[\S 2.1]{ped-pp}), and hence the multiplier algebra of
    \begin{equation}\label{eq:c0cast}
      c_0\left((A_i)_i\right)
      :=
      \left\{(a_i)_i\in \prod_i A_i\ |\ \|a_i\|\text{ vanishing at infinity}\right\}.
    \end{equation}
    Continuous $\bG$-actions on the $A_i$ are easily seen to induce one on \Cref{eq:c0cast}, so that action's liftability to the multiplier algebra yields \Cref{item:th:discr-coprod-pres-prod}. 

  \item {\bf \Cref{item:th:discr-coprod-pres-prod} $\xRightarrow{}$ \Cref{item:th:discr-coprod-pres-coprodact}:} The latter is nothing but the former's specialization to {\it commutative} unital $C^*$-algebras. 
    
  \item {\bf \Cref{item:th:discr-coprod-pres-coprodact} $\xRightarrow{}$ \Cref{item:th:discr-coprod-pres-isdiscr}:} $\bG$ acts on itself by (left, say) translation, so there is \cite[Proposition 3.1]{dvr-ex} a {\it $\bG$-equivariant compactification} $b\bG\supseteq \bG$. We will apply the hypothesis to $\Lambda$-indexed copies of the resulting action
    \begin{equation*}
      \bG\times b\bG
      \xrightarrow{\quad}
      b\bG.
    \end{equation*}
    Denoting $\bG_{\lambda}:=\bG$, the goal is thus to show that the action
    \begin{equation*}
      \bG\times
      \beta\left(\coprod_{\lambda\in\Lambda}b\bG_{\lambda}\right)
      \xrightarrow{\quad}
      \beta\left(\coprod_{\lambda\in\Lambda}b\bG_{\lambda}\right)
      =:X
    \end{equation*}
    is discontinuous.

    Suppose $\bG$ is {\it not} discrete. We can then find a convergent {\it net} \cite[Definition 11.2]{wil-top}
    \begin{equation}\label{eq:gonbgx}
      1\ne g_{\lambda}
      \xrightarrow[\lambda]{\quad}
      1
      ,\quad
      \lambda\in (\Lambda,\le).
    \end{equation}
    Furthermore, we can select the $g_{\lambda}$ so that compactly-supported continuous functions $f_{\lambda}$ exist with
    \begin{equation*}
      f_{\lambda}(1)=1,\quad f_{\lambda}(g_{\lambda})=0,\quad\forall \lambda\in\Lambda. 
    \end{equation*}
    Because the continuous functions on $\bG$ obtained by restriction through $\bG\subseteq b\bG$ are \cite[Example 2.3]{dvr-puc} precisely the (bounded) {\it right-uniformly continuous} (i.e. those that are uniformly continuous with respect to the {\it right uniformity} of \cite[\S 7.1, p.106]{zbMATH01285882}). In particular, the $f_{\lambda}$ (being compactly-supported) can be regarded as continuous functions on $b\bG$.
    
    % % To do {\it that}, one need not bother with the inner `$b$' decorations: given that the compactification $\bG\subseteq b\bG$ is a homeomorphism onto its image, it suffices to argue that
    % % \begin{equation}\label{eq:gonx}
    % %   \bG\times
    % %   \beta\left(\coprod_{\lambda\in\Lambda}\bG_{\lambda}\right)
    % %   \xrightarrow{\quad}
    % %   \beta\left(\coprod_{\lambda\in\Lambda}\bG_{\lambda}\right)
    % %   =X
    % % \end{equation}
    % % is discontinuous instead. 
    % % 
    
    Passing to a subnet if necessary, we can assume \cite[Theorem 17.4]{wil-top}
    \begin{equation*}
      (1_{\lambda})_{\lambda},\quad 1_{\lambda}:=1\in G_{\lambda}
    \end{equation*}
    convergent to some $p\in X$. Continuity for \Cref{eq:gonbgx} would demand that
    \begin{equation*}
      g_{\lambda} = g_{\lambda}\cdot 1_{\lambda}
      \xrightarrow[\lambda]{\quad}
      1\cdot p=p\in X,
    \end{equation*}
    which will not be the case under the assumption that $\bG$ is not discrete: we can find \cite[Theorem 15.8]{wil-top} continuous functions
    \begin{equation*}
      \bG\xrightarrow{\quad f_{\lambda}\quad}[0,1]
      ,\quad
      f(1)=1,\ f(g_{\lambda})=0,
    \end{equation*}
    and these glue to a continuous function $X\xrightarrow{f}[0,1]$ with $f(p)=1$ and $g_{\lambda}=0$ for all $\lambda$.
  \end{enumerate}
  This concludes the proof. 
\end{proof}

As for recovering some quantum flavor of the above, (co)products will be less suitable because $\cC(\bG)$'s failure to be commutative for quantum $\bG$ makes gluing $\bG$-action $A_i\xrightarrow{} \cC(\bG)\underline{\otimes}A_i$ into actions on $\prod_i A_i$ a tall order. Lifting actions to multiplier algebras, as in
\begin{equation*}
  \text{
    \Cref{item:th:discr-coprod-pres-isdiscr}
    $\xLeftrightarrow{\quad}$
    \Cref{item:th:discr-coprod-pres-sc}
    $\xLeftrightarrow{\quad}$
    \Cref{item:th:discr-coprod-pres-mult}
  }
\end{equation*}
of \Cref{th:discr-coprod-pres}, {\it does} make sense however. Recall \cite[Deﬁnition 2.6]{vs-impr} that for a compact quantum group $\bG$ and a possibly non-unital $C^*$-algebra $A$ a {\it (continuous) $\bG$-action on $A$} is a {\it non-degenerate} \cite[p.840]{kvcast} morphism
\begin{equation}\label{eq:nonunitact}
  A\xrightarrow{\quad\rho\quad}M(\cC(\bG)\underline{\otimes}A),
\end{equation}
coassociative in the usual sense that
\begin{equation*}
  \begin{tikzpicture}[auto,baseline=(current  bounding  box.center)]
    \path[anchor=base] 
    (0,0) node (l) {$A$}
    +(3,.5) node (u) {$M(\cC(\bG)\underline{\otimes}A)$}
    +(3,-.5) node (d) {$M(\cC(\bG)\underline{\otimes}A)$}
    +(7,0) node (r) {$M\left(\cC(\bG)^{\underline{\otimes}2}\underline{\otimes}A\right)$}
    ;
    \draw[->] (l) to[bend left=6] node[pos=.5,auto] {$\scriptstyle \rho$} (u);
    \draw[->] (u) to[bend left=6] node[pos=.5,auto] {$\scriptstyle \id\otimes\rho$} (r);
    \draw[->] (l) to[bend right=6] node[pos=.5,auto,swap] {$\scriptstyle \rho$} (d);
    \draw[->] (d) to[bend right=6] node[pos=.5,auto,swap] {$\scriptstyle \Delta\otimes\id$} (r);
  \end{tikzpicture}
\end{equation*}
commutes, and such that
\begin{equation}\label{eq:actclosedspan}
  \overline{\spn}^{\|\cdot\|}\left(\cC(\bG)\underline{\otimes}\bC\right)\rho(A) = M(\cC(\bG)\underline{\otimes}A).
\end{equation}

\begin{remark}
  For \Cref{eq:actclosedspan} to make sense, it is understood (\cite[Definition 1.8]{zbMATH04119008}, \cite[\S 3.1]{vrgn-phd}) that $\rho$ in fact takes values in the subalgebra
  \begin{equation*}    
    \widetilde{M}(\cC(\bG)\underline{\otimes}A)
    \le
    M(\cC(\bG)\underline{\otimes}A)
  \end{equation*}
  consisting of those elements which multiply $\cC(\bG)\underline{\otimes}\bC$ into $\cC(\bG)\underline{\otimes}A$. This is an additional restriction on multipliers: $A$ being non-unital, $\cC(\bG)\underline{\otimes}\bC$ will of course not be a subspace of $\cC(\bG)\underline{\otimes}A$. 
\end{remark}

\begin{theorem}\label{th:cqgisfin}
  For a compact quantum group $\bG$ the following conditions are equivalent.
  \begin{enumerate}[(a)]
  \item\label{item:th:cqgisfin-fin} $\bG$ is finite, i.e. $\cC(\bG)=\cO(\bG)$ is finite-dimensional.

  \item\label{item:th:cqgisfin-ma} Every action \Cref{eq:nonunitact} extends to one on $M(A)$.

  \item\label{item:th:cqgisfin-aaast} Every action \Cref{eq:nonunitact} extends to one on the {\it von Neumann envelope} \cite[Chapter III, Theorem 2.4 and Definition 2.5]{tak1} $A^{**}\supseteq A$. 
  \end{enumerate}  
\end{theorem}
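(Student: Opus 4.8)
The plan is to deduce this from \Cref{th:uniffinspec}, as \Cref{corn:b} in the introduction already advertises: I would prove \Cref{item:th:cqgisfin-fin} $\Rightarrow$ \Cref{item:th:cqgisfin-ma} and \Cref{item:th:cqgisfin-fin} $\Rightarrow$ \Cref{item:th:cqgisfin-aaast} directly, and the two converses in contrapositive form, all witnessed by a single example. For the forward implications, a finite $\bG$ has $\cC(\bG)$ a finite-dimensional $C^*$-algebra, say $\bigoplus_{i}M_{n_i}(\bC)$, so $\cC(\bG)\underline{\otimes}(-)$ is a finite direct sum of matrix amplifications and commutes both with the multiplier functor $M(-)$ and with passage to the bidual $(-)^{**}$ (the latter taking $\bigoplus_i M_{n_i}(B)$ to the von Neumann algebra $\bigoplus_i M_{n_i}(B^{**})$). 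A non-degenerate action morphism $\rho\colon A\to M(\cC(\bG)\underline{\otimes}A)=\cC(\bG)\underline{\otimes}M(A)$ then extends uniquely to a unital $*$-morphism $M(A)\to\cC(\bG)\underline{\otimes}M(A)$ and to a normal unital $*$-morphism $A^{**}\to\cC(\bG)\underline{\otimes}A^{**}$; coassociativity of either extension is forced by that of $\rho$, since two non-degenerate (respectively, normal) morphisms out of $M(A)$ (respectively, $A^{**}$) that agree on $A$ coincide, and the density requirement \Cref{eq:actclosedspan} is inherited routinely --- with $\Irr(\bG)$ finite, the spectral decomposition of \Cref{th:spectralenough} exhibits $M(A)$ and $A^{**}$ as finite algebraic direct sums of their isotypic components. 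This gives \Cref{item:th:cqgisfin-ma} and \Cref{item:th:cqgisfin-aaast}.

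For the converses, suppose $\bG$ is infinite, i.e.\ $\Irr(\bG)$ is infinite. I claim \Cref{item:th:cqgisfin-ma} and \Cref{item:th:cqgisfin-aaast} then both fail, and on the same $C^*$-algebra. Choose pairwise non-isomorphic $\alpha_n\in\Irr(\bG)$, put $H:=\bigoplus_n H_{\alpha_n}$ and let $U$ be the direct sum of the corresponding unitaries, a unitary $\bG$-representation on $H$ with infinite spectrum. The conjugation prescription \Cref{eq:actonkh} is a $\bG$-action on the non-unital $C^*$-algebra $A:=\cK(H)$, exactly as in the setup of \Cref{th:uniffinspec}. Since $M(\cK(H))=\cL(H)$ \cite[Example II.7.3.12(ii)]{blk} and $\cK(H)^{**}\cong\cL(H)$ as $C^*$-algebras, both identifications intertwining the canonical inclusions of $\cK(H)$, an extension of \Cref{eq:actonkh} to either $M(A)$ or $A^{**}$ is precisely an extension to a $\bG$-action on $\cL(H)$. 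By \Cref{th:uniffinspec} (the contrapositive of \Cref{item:th:uniffinspec-actonbh} $\Rightarrow$ \Cref{item:th:uniffinspec-finspec}) no such extension exists, $U$ having infinite spectrum. Hence \Cref{item:th:cqgisfin-ma} $\Rightarrow$ \Cref{item:th:cqgisfin-fin} and \Cref{item:th:cqgisfin-aaast} $\Rightarrow$ \Cref{item:th:cqgisfin-fin}, closing the cycle.

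With \Cref{th:uniffinspec} in hand the argument meets no deep obstacle; the only genuinely delicate points are the two bookkeeping ones already flagged in the discussion preceding \Cref{th:uniffinspec}: that \Cref{eq:actonkh} is a bona fide continuous $\bG$-action on the \emph{non-unital} algebra $\cK(H)$, and that the density axiom \Cref{eq:actclosedspan} really does pass through $M(-)$ and $(-)^{**}$ when $\cC(\bG)$ is finite-dimensional. Both being routine, the substance of the statement lies entirely in \Cref{th:uniffinspec}.
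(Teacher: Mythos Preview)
Your argument is correct and matches the paper's own proof essentially line for line: both directions rest on the identifications $M(\cK(H))=\cL(H)\cong\cK(H)^{**}$ (the paper cites Dixmier for the latter) together with \Cref{th:uniffinspec}, and the forward implications are dispatched by the finite-dimensionality of $\cC(\bG)$. The only cosmetic difference is that the paper phrases the converse directly (assuming \Cref{item:th:cqgisfin-ma} or \Cref{item:th:cqgisfin-aaast}, every unitary representation has finite spectrum, hence $\Irr(\bG)$ is finite) rather than in your contrapositive form with an explicitly chosen infinite-spectrum $U$; the content is the same.
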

\begin{proof}
  If $\bG$ is finite all tensor products are purely algebraic, so that \Cref{item:th:cqgisfin-fin} does indeed imply both \Cref{item:th:cqgisfin-ma} and \Cref{item:th:cqgisfin-aaast}. Conversely, for every Hilbert space $H$ we have \cite[Th\'eor\`eme 2]{zbMATH03055671} an identification
  \begin{equation*}
    \begin{tikzpicture}[auto,baseline=(current  bounding  box.center)]
      \path[anchor=base] 
      (0,0) node (l) {$\cK(H)$}
      +(2,.5) node (u) {$\cK(H)^{**}$}
      +(4,0) node (r) {$\cL(H)$}
      ;
      \draw[right hook->] (l) to[bend left=6] node[pos=.5,auto] {$\scriptstyle $} (u);
      \draw[->] (u) to[bend left=6] node[pos=.5,auto] {$\scriptstyle \cong$} (r);
      \draw[right hook->] (l) to[bend right=6] node[pos=.5,auto,swap] {$\scriptstyle $} (r);
    \end{tikzpicture}
  \end{equation*}
  of the two obvious inclusions, hence the opposite implications \Cref{item:th:cqgisfin-ma}, \Cref{item:th:cqgisfin-aaast} $\xRightarrow{}$ \Cref{item:th:cqgisfin-fin}: simply apply \Cref{th:uniffinspec} to conclude that all unitary $\bG$-representations have finite spectrum. 
\end{proof}

%\newpage

%%%%%%%%%%%%%%%%%%%%%%%%%%%%%%%%
%%%%%%%%%%%%%%%%%%%%%%%%%%%%%%%%
\section{Jointly monic families and uniformity}\label{se:topgen}

Recall \cite[Definition 4]{bcv_connes} that a family
\begin{equation}\label{eq:g2hi}
  \cO(\bG)\xrightarrowdbl{\quad}\cO(\bH_i),\quad i\in I
\end{equation}
of quantum subgroups {\it topologically generates} $\bG$ if said quotients do not all factor through a proper quotient $\cO(\bG)\xrightarrowdbl{}\cO(\bH)$. Equivalently, the topologically-generating families of quotients \Cref{eq:g2hi} are precisely those whose induced corestriction functors between the respective categories of comodules are {\it jointly full} \cite[Definition 2.15]{chi_rf}: a linear map between two (right-say) $\cO(\bG)$-comodules is a comodule morphism if and only if it is a comodule morphism over every quotient $\cO(\bH_i)$ in \Cref{eq:g2hi}.

Given the equivalence \cite[Theorem 2.1]{agore_mono}, for a coalgebra (hence also bialgebra or Hopf algebra) morphism $C\xrightarrow{f} D$, between $f$ being {\it monic} \cite[\S I.5]{mcl_2e} in the respective category and its inducing a full corestriction functor between categories of comodules, the following notion \cite[Definitiopn 10.5]{ahs} will be of some relevance. 

\begin{definition}\label{def:jointmon}
  A family of morphisms $c\xrightarrow{f_i}c_i$ in a category $\cC$ is {\it jointly monic} or a {\it mono-source} if for every $d\in \cC$ the map
  \begin{equation*}
    \cC(d,c)
    \xrightarrow{\quad \left(f_i\circ\right)_i\quad}
    \prod_i \cC(d,c_i)
  \end{equation*}
  is injective.

  {\it Jointly epic} families (or {\it epi-sinks} \cite[\S 10.63]{ahs}) are defined dually. 
\end{definition}

The family/source versions of the single-morphism results for coalgebras \cite[Theorem 2.1]{agore_mono} or Hopf algebras \cite[Theorem 2.5]{2302.12870v1} or the obvious analogues for bialgebras and so on are now no more difficult to prove, so we provide only the statement(s). 

\begin{proposition}\label{pr:monosourcecomod}
  A family of coalgebra morphisms $C\to C_i$ over a field $\Bbbk$ is a mono-source if and only if the corresponding family of corestriction functors $\cM^{C}\xrightarrow{}\cM^{C_i}$ between categories of comodules is jointly full.

  Moreover, the analogous statement holds in the category of Hopf $\Bbbk$-algebras, $\Bbbk$-bialgebras and CQG algebras, and one can substitute categories of finite-dimensional comodules throughout.  \qedhere
\end{proposition}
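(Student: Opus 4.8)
The plan is to prove the coalgebra statement first and then bootstrap to Hopf algebras, bialgebras and CQG algebras exactly as the cited single-morphism results do; throughout, \emph{mono-source} and \emph{epi-sink} are meant in the sense of \Cref{def:jointmon}, and \emph{joint fullness} of the corestriction functors $\cM^C\to\cM^{C_i}$ means: a linear map between $C$-comodules is $C$-colinear iff it is $C_i$-colinear after corestriction along every $f_i$. For the coalgebra assertion there are two implications.

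For the direction ``joint fullness $\Rightarrow$ mono-source'' I would run the counit argument of \cite[Theorem 2.1]{agore_mono} verbatim, now for a family: given coalgebra morphisms $g,h\colon D\to C$ with $f_i g=f_i h$ for every $i$, endow $D$ with the two right $C$-comodule structures $(\id\otimes g)\Delta_D$ and $(\id\otimes h)\Delta_D$; since $f_i g=f_i h$ these have literally the same corestriction to $C_i$ for each $i$, so $\id_D$ is a $C_i$-comodule morphism between them for all $i$, hence by joint fullness a $C$-comodule morphism, i.e.\ $(\id\otimes g)\Delta_D=(\id\otimes h)\Delta_D$; applying $\varepsilon_D\otimes\id$ gives $g=h$. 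Nothing changes relative to the single-morphism case.

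The substantive direction is ``mono-source $\Rightarrow$ joint fullness''. I would first reduce to $C$ finite-dimensional and to finite-dimensional comodules: every comodule is the directed union of its finite-dimensional subcomodules and colinearity of a linear map is detected on these, while a mono-source of coalgebras restricts along any subcoalgebra $C_0\subseteq C$ to a mono-source $C_0\to f_i(C_0)$ (compose a witnessing pair $g,h\colon D\to C_0$ with $C_0\hookrightarrow C$ and apply the hypothesis on $C$). In the finite-dimensional setting I would then dualize: $(f_i)$ being a mono-source of finite-dimensional coalgebras is equivalent to $(f_i^*\colon C_i^*\to C^*)$ being an epi-sink of finite-dimensional algebras, finite-dimensional $C$-comodules are finite-dimensional $C^*$-modules, and corestriction along $f_i$ is restriction of scalars along $f_i^*$; the claim becomes the family version of the standard fact that restriction of scalars along a ring homomorphism is a full functor precisely when that homomorphism is an epimorphism, with the role of the amalgamated sum $S\amalg_R S$ played by the corresponding wide pushout over the sink. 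Equivalently, and closer to \cite{agore_mono}, one can phrase the hypothesis through kernel pairs: $(f_i)$ is a mono-source iff the two canonical projections to $C$ agree on the intersection $\bigcap_i\bigl(C\,\square_{C_i}\,C\bigr)$ of the cotensor products, and this is exactly what annihilates the colinearity defect $\rho_N\phi-(\phi\otimes\id)\rho_M$ of a linear map $\phi\colon M\to N$ that is $C_i$-colinear for every $i$.

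Finally, the Hopf-algebra, bialgebra and CQG-algebra cases reduce to the coalgebra one. For single morphisms, \cite[Theorem 2.5]{2302.12870v1} (and the evident bialgebra analogue) show that such a morphism is monic in its category iff it is monic as a coalgebra morphism iff it induces a full corestriction functor; the identical argument — the extra multiplicative, antipodal or $*$-structure only has to be preserved and never obstructs the constructions above — upgrades this to sources, and joint fullness then follows from the coalgebra result already established. The ``finite-dimensional comodules'' clause is automatic once one notes that joint fullness on finite-dimensional comodules forces it on all comodules by the same directed-union argument (for CQG algebras cosemisimplicity makes this reduction especially transparent). I expect the only real obstacle to be the ``$\Rightarrow$'' direction for coalgebras: since a monomorphism of coalgebras need not be injective, one cannot simply chase kernels, so the content is the routine-but-not-purely-formal check that the kernel-pair/cotensor characterization — equivalently, the dual ring-epimorphism fact — behaves for a family exactly as it does for a single morphism, which is precisely why the statement is ``no more difficult to prove''.
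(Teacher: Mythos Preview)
Your proposal is correct and is precisely what the paper leaves implicit: the paper gives no proof at all, asserting only that the family versions of \cite[Theorem 2.1]{agore_mono} and \cite[Theorem 2.5]{2302.12870v1} are ``no more difficult to prove'' than the single-morphism originals. Your sketch bears this out, carrying through the counit trick for one direction, the reduction to finite dimensions and dualization to the ring-epimorphism characterization for the other, and the free-object adjunction for the transfer to bialgebras, Hopf algebras and CQG algebras, each with the evident family modifications.
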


With this in place, topologically-generating families $\bH_i\le \bG$ of compact quantum subgroups are nothing but epi-sinks in the category of compact quantum groups, dual to the category $\cat{CQGAlg}$ of CQG algebras. Or: they are families dual to mono-sources $\cO(\bG)\xrightarrowdbl{}\cO(\bH_i)$ in $\cat{CQGAlg}$. 

We noted in \Cref{ex:genunif} that the finite-spectrum condition does not lift from the individual members of a finite topologically generating family. One crucial aspect of the proof of the implication \Cref{item:clscac-g0profin} $\xRightarrow{}$ \Cref{item:clscac-clim} of \Cref{th:clscpct}, though, is that $\bG$ was generated by $\bD$ and the {\it normal} subgroup $\bG_0\trianglelefteq \bG$.

We transfer that setup here, while also extending the scope of the discussion to {\it linearly reductive} quantum groups: objects dual to cosemisimple Hopf algebras. We retain the notation $\cO(\bG)$ for the object (Hopf algebra over a field $\Bbbk$) dual to a quantum group $\bG$. Quantum subgroups $\bH\le \bG$ are then (dual to) Hopf algebra quotients $\cO(\bG)\xrightarrowdbl{}\cO(\bH)$, etc.

A quantum subgroup $\bH\le \bG$ is {\it normal} \cite[\S 1.5, pp.10-11]{pw} if the corresponding quotient $\cO(\bG)\xrightarrowdbl{}\cO(\bH)$ is invariant under both the left and the right {\it adjoint coactions}:
\begin{equation*}
  \cO(\bG)\ni x
  \xmapsto{\quad}
  x_1 S(x_3)\otimes x_2
  \quad\text{or}\quad
  x_2\otimes S(x_1)x_3
  \in
  \cO(\bG)^{\otimes 2}
\end{equation*}
respectively, in {\it Sweedler notation} \cite[\S 1.2]{swe}. The two conditions will be equivalent \cite[Proposition 1.5.1]{pw} for surjections of Hopf algebras with bijective antipode.

\begin{theorem}\label{th:fintopgen}
  Let $\bK,\bH\le \bG$ be an epi-sink of linearly reductive quantum groups over a field $\Bbbk$, with $\bK$ normal.

  A $\bG$-comodule $V\xrightarrow{\rho}V\otimes\cO(\bG)$ has finitely many irreducible constituents over $\bH$ and $\bK$ if and only if it does so over $\bG$. 
\end{theorem}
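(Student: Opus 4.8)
The plan is to dispose of the easy implication immediately and then organize the converse around the normal subgroup $\bK$, in the style of Clifford theory. Throughout, for a comodule $W$ over a linearly reductive quantum group $\bL$ write $\mathrm{spec}_{\bL}(W)$ for the set of isomorphism classes of its irreducible constituents, and recall that cosemisimplicity of $\cO(\bL)$ makes all irreducible $\bL$-comodules finite-dimensional and (by the Peter--Weyl-type decomposition $\cO(\bL)=\bigoplus_{\gamma}C_{\gamma}$ into coefficient coalgebras of the irreducibles) makes ``$\mathrm{spec}_{\bL}(W)$ finite'' equivalent to ``the coefficient coalgebra $C_W\le\cO(\bL)$ is finite-dimensional''. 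Restriction along a quantum subgroup $\cO(\bL)\twoheadrightarrow\cO(\bL')$ sends $C_W$ to $C_{W|_{\bL'}}$. The easy direction is then: if $\mathrm{spec}_{\bG}(V)$ is finite, $V$ is a sum of finitely many isotypic components each a multiple of a finite-dimensional irreducible, which restrict to $\bH$ or $\bK$ with finitely many constituents, and a finite union of finite sets is finite.

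For the converse, assume $\mathrm{spec}_{\bH}(V)$ and $\mathrm{spec}_{\bK}(V)$ are finite. I first treat the special case where $V$ is \emph{$\bK$-trivial}, i.e.\ the $\cO(\bK)$-coaction is trivial. By normality of $\bK$ (\cite[\S 1.5]{pw}), $B:=\cO(\bG)^{\mathrm{co}\,\cO(\bK)}$ is a Hopf subalgebra of $\cO(\bG)$ --- the function algebra of $\bG/\bK$ --- and a $\bG$-comodule is $\bK$-trivial exactly when its coaction is valued in $\cdot\otimes B$; in particular $C_V\subseteq B$. Next, topological generation forces $\bH$ to surject onto $\bG/\bK$: since $\bK$ is normal, the subgroup $\bH\bK\le\bG$ is again a quantum subgroup, it contains both $\bH$ and $\bK$, so by the definition of topological generation $\bH\bK=\bG$, whence $\bH$ maps onto $\bH\bK/\bK=\bG/\bK$; dually $\pi_{\bH}|_B$ is injective. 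Therefore $C_{V|_{\bH}}=\pi_{\bH}(C_V)\cong C_V$, and $C_V$ is finite-dimensional precisely when $\mathrm{spec}_{\bH}(V)$ is, so $\mathrm{spec}_{\bG}(V)$ is finite.

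For general $V$, set $S:=\mathrm{spec}_{\bK}(V)$ (finite by hypothesis), and for $\beta\in S$ let $V_{\beta}\le V$ be the $\bG$-subcomodule spanned by the isotypic components $V^{(\alpha)}$ with $\beta\in\mathrm{spec}_{\bK}(\alpha)$. Every irreducible $\bG$-constituent of $V$ has nonempty, $S$-contained $\bK$-spectrum, so $\mathrm{spec}_{\bG}(V)=\bigcup_{\beta\in S}\mathrm{spec}_{\bG}(V_{\beta})$, and each $V_{\beta}$ again has finite $\bH$- and $\bK$-spectra; it thus suffices to bound each $\mathrm{spec}_{\bG}(V_{\beta})$. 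Fix $\beta$ and choose a finite-dimensional $\bG$-comodule $M$ with $\beta$ among the constituents of $M|_{\bK}$ (one exists because $\cO(\bG)\twoheadrightarrow\cO(\bK)$ is onto). By normality the multiplicity module $\Hom_{\bK}(M,V_{\beta})=(M^{*}\otimes V_{\beta})^{\bK}$ is a $\bK$-trivial $\bG$-subcomodule of $M^{*}\otimes V_{\beta}$, and $\mathrm{spec}_{\bH}$ of it is contained in $\mathrm{spec}_{\bH}(M)^{*}\cdot\mathrm{spec}_{\bH}(V_{\beta})$, hence finite; so by the $\bK$-trivial case $\mathrm{spec}_{\bG}(\Hom_{\bK}(M,V_{\beta}))$ is finite. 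Finally the evaluation $M\otimes\Hom_{\bK}(M,V_{\beta})\to V_{\beta}$ is $\bG$-colinear and surjective --- each irreducible constituent $W$ of $V_{\beta}$ shares the $\bK$-constituent $\beta$ with $M$, so $\Hom_{\bK}(M,W)\ne 0$ and the colinear image in $W$, being a nonzero subcomodule, is all of $W$ --- whence $\mathrm{spec}_{\bG}(V_{\beta})\subseteq\mathrm{spec}_{\bG}(M)\cdot\mathrm{spec}_{\bG}(\Hom_{\bK}(M,V_{\beta}))$ is finite.

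The step I expect to be the main obstacle is the $\bK$-trivial case, and inside it the two normality-plus-generation inputs: that $B=\cO(\bG)^{\mathrm{co}\,\cO(\bK)}$ is a Hopf subalgebra with $\bK$-trivial $\bG$-comodules $=B$-comodules, and that topological generation yields $\bH\bK=\bG$ and hence the injectivity of $\pi_{\bH}|_B$. These (together with ``$\bK$-invariants of a $\bG$-comodule are a $\bG$-subcomodule'', used for the multiplicity module) are where both hypotheses --- normality of $\bK$ and topological generation --- are genuinely consumed, and they play here the role that the normal component $\bG_{0}\trianglelefteq\bG$ plays in the implication \Cref{item:clscac-g0profin} $\xRightarrow{}$ \Cref{item:clscac-clim} of \Cref{th:clscpct}; the necessity of some such hypothesis is what \Cref{ex:genunif} records.
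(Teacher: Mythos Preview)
Your proof is correct and takes a genuinely different route from the paper's. Both arguments share the same two foundational inputs: the exact sequence $\Bbbk\to\cO(\bQ)\hookrightarrow\cO(\bG)\twoheadrightarrow\cO(\bK)\to\Bbbk$ coming from normality of $\bK$ (your $B$ is the paper's $\cO(\bQ)$), and the injectivity of $\cO(\bQ)\hookrightarrow\cO(\bG)\twoheadrightarrow\cO(\bH)$ coming from topological generation. From there the paper invokes an external result (\cite[Theorem 4.4]{zbMATH07700069}) to the effect that two irreducible $\bG$-comodules share a $\bK$-constituent precisely when they lie in the same class for the relation $V_i\le V_j\otimes\cO(\bQ)$; finite $\bK$-spectrum then gives finitely many such classes, and the injectivity of $\cO(\bQ)\to\cO(\bH)$ together with finite $\bH$-spectrum forces each class to be finite. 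Your argument instead runs a self-contained Clifford-theory reduction: handle the $\bK$-trivial case directly via $C_V\subseteq B\hookrightarrow\cO(\bH)$, and reduce the general case to it through the multiplicity module $\Hom_{\bK}(M,V_\beta)=(M^*\otimes V_\beta)^{\bK}$ and the evaluation map. Your approach avoids the external citation at the cost of a longer argument; the paper's is more streamlined but black-boxes the structural content.

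One point to tighten: your justification of the injectivity of $\pi_{\bH}|_B$ via ``$\bH\bK$ is a quantum subgroup, hence $\bH\bK=\bG$, hence $\bH\twoheadrightarrow\bG/\bK$'' is classical in flavor, and making sense of $\bH\bK$ and the second isomorphism theorem for quantum groups needs some care. The conclusion is correct and admits a direct argument that sidesteps this: if $I:=\ker(\pi_{\bH}|_B)\ne 0$, then $I\subseteq B^+\subseteq\ker\pi_{\bK}$ (since $\pi_{\bK}(b)=\varepsilon(b)1$ for $b\in B$) and $I\subseteq\ker\pi_{\bH}$ by definition, so the Hopf ideal of $\cO(\bG)$ generated by $I$ lies in both kernels, contradicting the mono-source hypothesis. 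The paper, for its part, also asserts this injectivity without spelling out the argument.
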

\begin{proof}
  Every simple $\cO(\bG)$-comodule is finite-dimensional \cite[Finiteness Theorem 5.1.1]{mont} and hence decomposes into finitely many summands as either a comodule over any cosemisimple quotient of $\cO(\bG)$. The interesting implication is thus ($\xRightarrow{}$).

  All comodules over a cosemisimple coalgebra being {\it coflat} \cite[\S 2.4, preceding Theorem 2.4.17]{dnr}, the normality of $\bK\le \bG$ is in fact equivalent \cite[Theorem 2]{tak-ff} to the surjection $\cO(\bG)\xrightarrowdbl{}\cO(\bK)$ fitting into an {\it exact sequence} \cite[p.23]{ad}
  \begin{equation*}
    \Bbbk
    \xrightarrow{}
    \cO(\bQ)
    \lhook\joinrel\xrightarrow{\quad}    
    \cO(\bG)
    \xrightarrowdbl{\quad}
    \cO(\bK)
    \xrightarrow{}
    \Bbbk
  \end{equation*}
  of (cosemisimple) Hopf algebras. 
  
  The assumption that $V$ have finitely many irreducible constituents as a $\cO(\bK)$-comodule means \cite[Theorem 4.4]{zbMATH07700069} that its irreducible constituents $V_i$, $i\in I$ over the original (large) Hopf algebra $\cO(\bG)$ fall into finitely many classes for the equivalence relation
  \begin{equation}\label{eq:vivj}
    V_i\sim V_j
    \quad
    \xLeftrightarrow{\quad}
    \quad
    V_i\le V_j\otimes \cO(\bQ)
    \text{ as $\cO(\bG)$-comodules}.
  \end{equation}
  On the other hand the fact that the pair of surjections
  \begin{equation*}
    \cO(\bG)\xrightarrowdbl{\quad}\cO(\bK),\ \cO(\bH)
  \end{equation*}
  is a Hopf-algebra mono-source implies that the composition
  \begin{equation*}
    \cO(\bQ)
    \lhook\joinrel\xrightarrow{\quad}    
    \cO(\bG)
    \xrightarrowdbl{\quad}
    \cO(\bH)
  \end{equation*}
  is an injection, so that whenever $V_i$ is {\it not} equivalent to $V_j$ in the sense of \Cref{eq:vivj} the same holds over $\cO(\bH)$. But this means that each of the (finitely many) conjugacy of \Cref{eq:vivj} is itself finite, and we are done. 
\end{proof}

\addcontentsline{toc}{section}{References}
%\bibliography{bib}{}
%\bibliographystyle{plain}
%\bibliographystyle{emss}

\Addresses

\end{document}